\numberwithin{equation}{section}
\theoremstyle{plain} 
\newtheorem{lemma}{Lemma}[section] 
\newtheorem{theorem}[lemma]{Theorem}  
\newtheorem{corollary}[lemma]{Corollary}
\theoremstyle{definition} 
\newtheorem{definitions}[lemma]{Definitions} 
\theoremstyle{remark}
\newtheorem*{remarks}{Remarks} 
\begin{document}

\begin{center} 
{\bf ON EIGENFUNCTIONS OF THE KERNEL $\frac{1}{2} + \lfloor \frac{1}{xy}\rfloor - \frac{1}{xy}$}   
\end{center} 

\medskip  

\begin{center} 
{\sc N. Watt}   
\end{center}

\vskip 15mm 

\noindent{\bf Abstract.} \ 
The integral kernel $K(x,y) := \frac{1}{2} + \lfloor \frac{1}{x y}\rfloor - \frac{1}{x y}\,$ 
($0<x,y\leq 1$) has connections with the Riemann zeta-function 
and a (recently observed) connection with the Mertens function.
In this paper we begin a general study of the eigenfunctions of $K$.   
Our proofs utilise some classical real analysis (including Lebesgue's theory of integration) 
and elements of the established theory of square integrable symmetric integral kernels. 

\smallskip 

\noindent{\bf Keywords:} \ 
symmetric kernel, eigenfunction, Hankel operator, 
iterated kernel, periodic Bernouilli function, 
Hilbert-Schmidt theorem, Riemann zeta-function, Mertens function. 

\section{Introduction} 

This paper reports the results of research into the properties of the eigenfunctions of the integral kernel 
$K : [0,1]\times[0,1] \rightarrow {\mathbb R}$ defined by:  
\begin{equation}\label{DefK} 
K(x,y)=
\begin{cases}
\frac{1}{2} -  
\left\{ 
(xy)^{-1} 
\right\} 
&\text{if $0<x,y\leq 1$}, \\  
0 &\text{if $0\leq x,y\leq 1$ and $xy=0$} ,
\end{cases}
\end{equation}
where 
$\{ \alpha\} := \alpha -\lfloor \alpha\rfloor 
=\alpha - \max\{ n\in{\mathbb Z} : n\leq \alpha\}$. 
Our interest in this kernel stems from a connection with Mertens sums $\sum_{n\leq x} \mu(n)$, 
in which $x\geq 1$ and $\mu(n)$ is the M\"obius function. This connection, 
which has its origins in a formula discovered by Mertens himself \cite[Section~3]{Mertens 1897}, 
is not, however, something that shall concern us in this present paper, 
as we have nothing to add to what has already been written about 
it in \cite{Huxley and Watt 2018}, \cite{Watt preprint-a} and \cite{Watt preprint-b}. 
\par 
The kernel $K$ is clearly real and symmetric 
(i.e. one has 
$K(x,y)=K(y,x)\in{\mathbb R}$, for $0\leq x,y\leq 1$). 
It is also a (Lebesgue) measurable function on $[0,1]\times[0,1]$, with Hilbert-Schmidt norm 
\begin{equation}\label{HSnorm}
\left\| K\right\|_{\rm HS} 
:= \left( \int_0^1 \int_0^1 K^2(x,y) dx dy \right)^{1/2} < {\textstyle\frac{1}{2}} , 
\end{equation} 
and satisfies 
\begin{equation}\label{HalfHilbertSchmidt} 
\min\left\{ 0 , \left( \textstyle{-\frac{1}{2}}\right)^p\right\} 
\leq\int_0^1 K^p (x,y) dy < \left( \textstyle{\frac{1}{2}}\right)^p
\quad\text{($p\in\{ 1 , 2\}$, $0\leq x\leq 1$).}
\end{equation}
Note that \eqref{HalfHilbertSchmidt} contains an implicit assertion 
to the effect that, for any constant $a\in [0,1]$,  
the corresponding function $y\mapsto K(a,y)$ 
(and so also the function $y\mapsto K(y,a)$) is measurable on $[0,1]$. 
\par 
In addition to the above mentioned properties, 
$K$ has the property of being {\it non-null} (i.e. one has $\| K\|_{\rm HS} >0$). 
Partly in consequence of this, there exists a maximal orthonormal system 
$\left\{\phi_1,\phi_2, \ldots\right\}\subset L^2 \bigl( [0,1]\bigr)$ such that 
\begin{equation}\label{DefEigenfunction}
\phi_j(x) = \lambda_j \int_0^1 K(x,y) \phi_j(y) dy 
\quad\text{($0\leq x\leq 1$, $j\in{\mathbb N}$),}
\end{equation}
where $\lambda_1,\lambda_2,\ldots $ are certain non-zero real constants: 
for proof, see the discussion of \cite[Section~3.8]{Tricomi 1957} and our remarks 
at the end of this paragraph, and after the next paragraph. 
Following \cite{Tricomi 1957}, we say that the numbers 
$\lambda_1,\lambda_2,\ldots $ are the {\it eigenvalues} of $K$: the associated {\it eigenfunctions} are   
$\phi_1(x),\phi_2(x),\ldots $, respectively. 
In \cite{Watt preprint-a}, we have shown that $K$ has infinitely many distinct positive eigenvalues and 
infinitely many distinct negative eigenvalues. 
\par 
Note that $L^2\bigl( [0,1]\bigr)$ denotes here the space of 
(Lebesgue) measurable functions $f : [0,1] \rightarrow {\mathbb R}$ that are {\it square-integrable} 
(in that $f^2$ is Lebesgue integrable on $[0,1]$), 
and that what is meant (above) by  
{\it orthonormality} is orthonormality with respect 
to the (semi-definite) inner product 
\begin{equation}
\label{DefInnerProduct} 
\left\langle f , g\right\rangle 
:= \int_0^1 f(x) g(y) dx 
\quad\text{($f,g\in L^2\bigl( [0,1]\bigr)$).}
\end{equation} 
Each $f\in L^2 \left( [0,1]\right)$ has  
{\it norm} $\| f\| :=\sqrt{\langle f , f\rangle}$. 
This `norm' is actually only a seminorm on $L^2 \left( [0,1]\right)$, since the condition 
$\| f\| =0$ implies only that $f(x)=0$
{\it almost everywhere} (with respect to the Lebesgue measure). 
\par 
In the theory developed in \cite{Tricomi 1957} it is implicit that our condition \eqref{DefEigenfunction} is  
replaced by the weaker condition that, for $j\in{\mathbb N}$, 
one has $\lambda_j\int_0^1 K(x,y) \phi_j(y) dy = \phi_j(x)$ almost everywhere in $[0,1]$. 
We can justify the stronger condition \eqref{DefEigenfunction} by observing that if   
$\lambda\in{\mathbb R}\backslash\{ 0\}$ and $\phi\in L^2 \left( [0,1]\right)$   
are such that $\lambda\int_0^1 K(x,y) \phi(y) dy = \phi(x)$ almost everywhere in $[0,1]$,   
then, given that we have \eqref{HalfHilbertSchmidt}, it follows by the Cauchy-Schwarz inequality 
that the function $\phi^{\dagger} (x) := \lambda\int_0^1 K(x,y) \phi (y) dy$ 
is an element of $L^2 \left( [0,1]\right)$ that satisfies 
$\lambda\int_0^1 K(x,y) \phi^{\dagger} (y) dy = \phi^{\dagger} (x)$ for all $x\in [0,1]$, 
and has $\| \phi^{\dagger} - \phi \| = 0$, so that $\langle \phi^{\dagger} , \psi\rangle 
= \langle \phi , \psi\rangle$ for all $\psi\in L^2 \left( [0,1]\right)$. 
\par 
In light of what has just been noted (in the last paragraph), we  
make it our convention that a function $\phi$  
be considered an eigenfunction of $K$ 
if and only if it is an element of $L^2 \bigl( [0,1]\bigr)$ that has norm 
$\| \phi\| > 0$ and is such that, for some $\lambda\in{\mathbb R}$ (necessarily 
an eigenvalue of $K$),  
one has $\phi(x) = \lambda \int_0^1 K(x,y) \phi(y) dx$ for all $x\in [0,1]$.
\par
It is shown in \cite[Chapter~2]{Tricomi 1957} that, for kernels 
such as $K$, each eigenvalue $\lambda$ has an {\it index},  
$i(\lambda) := |\{ j\in{\mathbb N} : \lambda_j = \lambda\}|$,  
that is finite. Thus we may follow \cite[Section~3.8~(12)]{Tricomi 1957} in assuming the eigenvalues of $K$ to be numbered in such a way that 
\begin{equation}\label{EigenvalueOrder1} 
0<\left| \lambda_1\right| \leq 
\left| \lambda_2\right| \leq 
\left| \lambda_3\right| \leq \ldots 
\end{equation} 
and  
\begin{equation}\label{EigenvalueOrder2} 
\lambda_j\geq\lambda_{j+1} 
\quad\text{when $|\lambda_j|$ and $|\lambda_{j+1}|$ are equal.} 
\end{equation} 
With this last assumption the sequence $\lambda_1,\lambda_2,\ldots $  becomes uniquely determined: the same cannot be said of the corresponding orthonormal 
sequence of eigenfunctions, 
$\phi_1,\phi_2,\ldots\ $, since one can always substitute $-\phi_j(x)$ in place of $\phi_j(x)$ 
(while other substitutions become possible in the event of having $i(\lambda_j) \geq 2$). 
\par 
Aside from the connection with Mertens sums (mentioned in the first paragraph of this section), 
another reason for studying the eigenfunctions of $K$ 
is that there is a connection between this kernel and Riemann's zeta-function, $\zeta(s)$. 
In order to make this connection apparent we begin by observing that, if $f$ is a continuous 
real valued function on $[0,1]$ that satisfies 
\begin{equation*} 
\int_0^1 \frac{|f(x)| dx}{x} < \infty\;,
\end{equation*} 
then, by application of the most rudimentary form of the Euler-Maclaurin summation formula 
\cite[Theorem~7.13]{Apostol 1974}, it may be established that when $0<x\leq 1$ one has: 
\begin{equation}\label{K_action=EuMacErr} 
\int_0^1 K(x,y) f(y) dy 
= \sum_{n>\frac1x} F\left(\frac{1}{n x}\right) 
- \int_{\frac1x}^{\infty} F\left(\frac{1}{\nu x}\right) d\nu 
+ F(1) K(1,x) \;, 
\end{equation} 
where $F(z):=\int_0^z f(y) dy\,$ ($0\leq z\leq 1$). 
In particular, when $f(x):= x^s\,$ ($0\leq x\leq 1$) and $s$ is 
any complex constant satisfying ${\rm Re}(s) >0$, one finds (by \eqref{K_action=EuMacErr}) 
that 
\begin{equation}\label{ZetaConnect-1} 
(s+1)x^{s+1} \int_0^1 K(x,y) y^s dy 
=\zeta(s+1) - \sum_{n\leq\frac1x}\frac{1}{n^{s+1}} - \frac{x^s}{s} 
+ x^{s+1} K(1,x) 
\end{equation} 
for $0<x\leq 1$. The novelty here lies in the presentation (not the content) 
of this result: see for example \cite[Equation~(3.5.3)]{Titchmarsh 1986}, which is 
equivalent to \eqref{ZetaConnect-1} in the special case where $1/x\in{\mathbb N}$. 
Similarly to what is observed in \cite[Section~3.5]{Titchmarsh 1986}, 
one may deduce, by analytic continuation from the half plane ${\rm Re}(s)>0$, 
that \eqref{ZetaConnect-1} holds for all $s\in{\mathbb C} -\{ 0\}$ 
satisfying the condition ${\rm Re}(s) > -1$. 
\par
Though it is somewhat peripheral to our present discussion, we 
remark that, since it is known that $\zeta(1+s) = s^{-1} + \gamma + O\left( |s|\right)$ for $|s|\leq 1\,$ 
(where $\gamma = 0{\cdot}5772\ldots\ $ is Euler's constant), 
one may deduce from \eqref{ZetaConnect-1} that 
\begin{equation*} 
x \int_0^1 K(x,y) y^0 dy 
=\gamma - \sum_{n\leq\frac1x}\frac{1}{n} + \log\left(\frac1x\right) + x K(1,x) 
\qquad\text{($0<x\leq 1$)} . 
\end{equation*} 
Given that $\zeta(0)=-\frac12$ and $\zeta'(0)=-\frac12 \log(2\pi)$, 
one may (similarly) deduce from \eqref{ZetaConnect-1} and \eqref{DefK} that 
\begin{multline}\label{Stirling_Alt} 
\lim_{s\rightarrow (-1)+} \int_0^1 K(x,y) y^s dy \\ 
=\log\left( \left\lfloor 1/x \right\rfloor !\right) 
- \left\lfloor 1/x\right\rfloor \log\left(1/x\right) 
+ (1/x) - \log\sqrt{2\pi / x}  
\end{multline} 
for $0<x\leq 1$. A well-known result closely related to this is Stirling's formula 
\cite[Equations~(B.25) and~(B.26)]{Montgomery and Vaughan 2007}.   
With the help of Stirling's formula one can show that 
\eqref{Stirling_Alt} would remain valid if the limit that occurs on its left-hand side  
were to be replaced with the improper Riemann integral 
$\lim_{\varepsilon\rightarrow 0+} \int_{\varepsilon}^1 K(x,y) y^{-1} dy$.  
\par 
An alternative way to connect $K(x,y)$ with $\zeta(s)$ begins with the observation  
in \cite[Section~1]{Watt preprint-b} to the effect that if $f$ is a measurable 
complex valued function defined on $[0,1]$ that satisfies $\int_0^1 \left| f(y)\right|^2 dy <\infty$, 
and if one puts 
\begin{equation*} 
g(x):=\int_0^1 K(x,y)f(y) dy\quad\text{for $0\leq x\leq 1$} , 
\end{equation*} 
while taking $F$, $G$ and $h$ to be the functions on $[0,\infty)$ satisfying 
\begin{equation*} 
\sqrt{x}\cdot \left( f(x) , g(x) , K( 1 , x)\right) = \left( F(v) , G(v) , h(v)\right) \in{\mathbb C}^3 
\qquad\text{($0<x=e^{-v}\leq 1$)} ,  
\end{equation*} 
then one will have both 
\begin{equation} \label{Hankel_Op}
G(u) = \int_0^{\infty} h(u+v) F(v) dv =  \left( \Gamma_h F\right) (u) \quad\text{(say)} ,  
\end{equation} 
for $0\leq u <\infty$, and $\int_0^\infty \left| F(v)\right|^2 dv = \int_0^1 \left| f(y)\right|^2 dy <\infty$.  
Note that \eqref{Hankel_Op} implicitly defines $\Gamma_h$ to be 
a certain Hankel operator on the space of complex valued functions that 
are square integrable on $[0,\infty)$.  Researchers investigating   
such operators have found it useful to 
consider the Laplace transform of the relevant kernel function:  
see, for example \cite[Chapter~4]{Partington 1988}. 
In our case the relevant kernel function is $h$. A connection with 
$\zeta(s)$ therefore arises due to our having:  
\begin{align*} 
\left( {\mathcal L} h\right) \left( s - {\textstyle\frac12}\right) 
 &:= \int_0^{\infty}  h(v) e^{-\left( s - \frac12\right) v} dv \\ 
 &=\int_0^{\infty} K\left( 1 , e^{-v}\right) e^{- s v}  dv \\ 
 &=\int_0^1 K(1,y) y^{s - 1} dy =\frac{\zeta(s) - \frac{1}{s-1} - \frac12}{s} =\frac{\zeta(s) - \zeta(0)}{s} - \frac{1}{s-1}  
\end{align*} 
for ${\rm Re}(s) > 0$ 
(the penultimate equality here following by virtue of \eqref{ZetaConnect-1}, with 
$s-1$ substituted for $s$). 
\par 
A third indication of a connection between $K(x,y)$ and $\zeta(s)$ is 
implicit in \cite[Equations~(36), (37) and~(41)]{Huxley and Watt 2018}. 
This connection, and the other two (discussed above) are all  
closely linked: they share a common origin. 
\par
The connections just noted between $K(x,y)$ and $\zeta(s)$ 
play no part in the remainder of this paper, but we do have some hope that 
a worthwhile application of one of them may eventually be found: 
it might (for example) be the case that interesting results 
concerning the eigenfunctions of $K(x,y)$ can be deduced from known properties 
of $\zeta(s)$. 
\par 
In this paper we employ only methods from classical real analysis 
(including some of Lebesgue's theory of integration) together with certain elements 
of the general theory of square integrable symmetric integral kernels 
(our primary reference for this theory being \cite{Tricomi 1957}). 
We have aimed to answer some 
basic questions concerning the eigenfunctions of $K$. 
We shall show, for example, that the eigenfunctions of $K$ 
are continuous on $[0,1]$: this is Theorem~2.10. 
In Theorem~4.1 we find that the eigenfunctions of 
$K$ are differentiable at any point $x\in (0,1)$ 
that is not the reciprocal of a positive integer.  
That theorem also supplies a useful formula for the first derivative of any eigenfunction.  
In Theorem~4.13 we show, in effect, that if $\phi$ is an eigenfunction of $K$, 
then the function $x\mapsto x \phi' (x) + \frac12 \phi(x)$ is a solution of 
a particular integral equation with kernel $K(x,y)$. 
In the latter part of Section~4, we obtain 
(via a well-known theorem of Hilbert and Schmidt) certain corollaries of Theorem~4.13: 
these corollaries have interesting further consequences, which we intend to discuss   
in another paper (currently in preparation). 
\par 
In Section~5 (the final section of the paper) we show that the behaviour of 
any eigenfunction of $K$ approximates that of a certain very simple 
oscillatory function on any neighbourhood $[0,\varepsilon)$ of the point $x=0$ that is sufficiently 
small (in terms of the relavant eigenvalue). Our main results there are Theorems~5.4 and~5.11. 
\par 
In addition to the above mentioned results, 
we also obtain a number of upper bounds for the `sizes' of eigenfunctions and their first 
derivatives: see, in particular, \eqref{PhiBoundedUniformly} and Theorems~3.2, 4.6, 4.9 and~4.11. 
We think it likely that, with more work, and some new ideas, it should 
be possible to significantly improve upon all of these bounds 
(and, as a consequence, improve upon Corollary~4.10 also).  
\par 
In Lemmas~2.3, 2.4, 5.3, 5.5 and~5.6, Theorems~2.7, 2.9 and~2.11, and 
Corollaries~2.8 and~2.12, we obtain certain results concerning 
the iterated kernel $K_2 (x,y)$ defined at the start of the next section. 
Most of these results are required for use in other proofs, but 
some were included in this paper due to their own intrinsic interest. 
The function $K_2 (x,y)$ is, in our opinion, interesting enough to merit further study:  
our Remarks following the proof of Lemma~2.4 are connected with this matter. 

\section{Continuity} 

\begin{definitions} 
Following \cite{Tricomi 1957}, we define   
\begin{equation}\label{K2symmetrically} 
K_2 (x,y) := 
\int_0^1 K(x,z) K(z,y) dz  = 
\int_0^1 K(x,z) K(y,z) dz , 
\end{equation} 
for $0\leq x,y\leq 1$. 
\end{definitions} 
\par 
Like $K$, the function $K_2$ is real-valued, measurable and square-integrable on $[0,1]\times [0,1]$. 
The final equality in \eqref{K2symmetrically} 
holds by virtue of $K$ being symmetric: 
we deduce from it that $K_2$ is a symmetric integral kernel. 
\par
We shall need to make use of the fact that any eigenfunction of $K$ is also an 
eigenfunction of $K_2$. In particular, when $\phi$ is an eigenfunction of $K$, 
and $\lambda$ the associated eigenvalue, one has:
\begin{equation}\label{K2Eigenfunction} 
\phi (x) = \lambda^2 \int_0^1 K_2 (x,y) \phi (y) dy 
\qquad\text{($0\leq x\leq 1$).} 
\end{equation} 
To verify this, observe that, since  
that $K$ is both measurable and bounded on $[0,1]\times [0,1]$, 
while $\phi$ is an element of $L^2\bigl( [0,1]\bigr)$  
satisfying $\lambda \int_0^1 K(z,y) \phi (y) dy = \phi(z)$ for $0\leq z\leq 1$, 
it therefore follows by \eqref{HalfHilbertSchmidt} and  Fubini's theorem that, for $0\leq x\leq 1$, one has 
\begin{align*} 
\phi (x) &= \lambda \int_0^1 K(x,z)\left( \lambda \int_0^1 K(z,y) \phi (y) dy\right) dz \\ 
 &= \lambda^2 \int_0^1 \left( \int_0^1 K(x,z) K(z,y)dz\right) \phi (y) dy , 
\end{align*}
and so (see the definition \eqref{K2symmetrically}) 
the result \eqref{K2Eigenfunction} is obtained. 
\par 
In preparation for our first application of \eqref{K2Eigenfunction}, 
which comes in the proof of Theorem~2.10 (below), we work on adding to what we know about $K_2$. 
\begin{definitions} 
For $n\in{\mathbb N}$ we define $\widetilde B_n (t)$, 
the {\it $n$-th periodic Bernouilli function}, by: 
\begin{equation*} 
\widetilde B_n (t) := B_n\left( \{ t\}\right) 
\quad\text{($t\in{\mathbb R}$),}
\end{equation*} 
where $B_n (x)$ is the Bernouilli polynomial of degree $n$ 
(the definition of which may be found in \cite[Section~24.2]{Olver et al. 2010}). In particular, 
\begin{equation}\label{DefTildeB1andB2}
\widetilde B_1 (t) := \{ t\} - {\textstyle\frac{1}{2}}\quad\text{and}\quad\widetilde B_2 (t) := \{ t\}^2 -\{ t\} + {\textstyle\frac{1}{6}} 
\quad\text{($t\in{\mathbb R}$),} 
\end{equation} 
and so (given \eqref{DefK}) we have: 
\begin{equation}\label{KtoTildeB1} 
\widetilde B_1 \left( \frac{1}{w}\right) = 
- K(x,y) 
\quad\text{($0<x,y\leq 1$ and $xy=w$).} 
\end{equation} 
\end{definitions}  
\begin{lemma} 
For $0<x,y\leq 1$, one has 
\begin{align*} 
K_2 (x,y) &= 
- {\textstyle\frac{1}{2}} x \widetilde B_2 \left( \frac{1}{x}\right) \widetilde B_1 \left( \frac{1}{y}\right)  
+\frac{1}{x} \int_{\frac{1}{x}}^{\infty} \widetilde B_2 (t) \widetilde B_1 \left( \frac{x t}{y}\right) \frac{dt}{t^3} \\ 
&\phantom{{=}} - 
\frac{1}{2y} \int_{\frac{1}{x}}^{\infty} \widetilde B_2 (t) \frac{dt}{t^2} + 
\frac{x}{2y^2} 
\sum_{m > \frac{1}{y}} \frac{\widetilde B_2 \left( \frac{my}{x}\right)}{m^2} . 
\end{align*} 
\end{lemma}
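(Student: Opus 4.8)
\emph{Proof proposal.} The plan is to convert $K_2(x,y)$ into a single integral over $[1/x,\infty)$ of a product of two periodic Bernoulli functions, and then to integrate by parts, exploiting the identity $\widetilde B_2{}'(t)=2\widetilde B_1(t)$ (valid for $t\notin\mathbb Z$) together with the fact that $\widetilde B_2$ is continuous (and piecewise smooth) on all of $\mathbb R$.

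First I would use \eqref{KtoTildeB1} to rewrite \eqref{K2symmetrically} as $K_2(x,y)=\int_0^1\widetilde B_1\bigl(\tfrac{1}{xz}\bigr)\widetilde B_1\bigl(\tfrac{1}{zy}\bigr)\,dz$, and then apply the change of variables $t=1/(xz)$, a decreasing $C^1$ bijection of $(0,1]$ onto $[1/x,\infty)$ with $z=1/(xt)$ and $dz=-dt/(xt^2)$. Since the integrand then becomes $\widetilde B_1(t)\,\widetilde B_1(xt/y)$, this yields $K_2(x,y)=\frac1x\int_{1/x}^{\infty}\widetilde B_1(t)\,\widetilde B_1\bigl(\tfrac{xt}{y}\bigr)\,t^{-2}\,dt$, an absolutely convergent integral since $|\widetilde B_1|\le\tfrac12$ (by \eqref{DefTildeB1andB2}) and $\int_{1/x}^{\infty}t^{-2}\,dt<\infty$.

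Next I would integrate $\widetilde B_1(t)\,dt=\tfrac12\,d\widetilde B_2(t)$ by parts against $g(t):=t^{-2}\widetilde B_1(xt/y)$. Because $\widetilde B_2$ is continuous on $\mathbb R$, its corners at the integers contribute no boundary terms; but $g$ has jump discontinuities exactly where $xt/y\in\mathbb Z$, i.e. at $t=my/x$ for integers $m>1/y$, and only finitely many of these lie in any bounded interval. I would therefore split $[1/x,\infty)$ at these points, integrate by parts on each subinterval --- on which $g$ is $C^1$ with $g'(t)=(x/y)t^{-2}-2t^{-3}\widetilde B_1(xt/y)$ --- and sum. The telescoped boundary terms produce: the left-endpoint contribution $-\tfrac12\widetilde B_2(1/x)\,g(1/x^{+})=-\tfrac12 x^2\,\widetilde B_2(1/x)\,\widetilde B_1(1/y)$; no contribution at $+\infty$, since $\widetilde B_2(t)g(t)=O(t^{-2})\to 0$; and, for each jump point $t=my/x$, the term $\tfrac12\widetilde B_2(my/x)\bigl(g((my/x)^{-})-g((my/x)^{+})\bigr)$, where the jump of $\widetilde B_1$ across an integer is $\widetilde B_1(m^{-})-\widetilde B_1(m^{+})=\tfrac12-(-\tfrac12)=1$, so that $g((my/x)^{-})-g((my/x)^{+})=(x/(my))^2$. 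These boundary terms sum absolutely (being $O(m^{-2})$) to $\tfrac{x^2}{2y^2}\sum_{m>1/y}\widetilde B_2(my/x)/m^2$. Splitting the remaining $-\tfrac12\int_{1/x}^{\infty}\widetilde B_2(t)g'(t)\,dt$ into $-\tfrac{x}{2y}\int_{1/x}^{\infty}\widetilde B_2(t)\,t^{-2}\,dt$ and $\int_{1/x}^{\infty}\widetilde B_2(t)\widetilde B_1(xt/y)\,t^{-3}\,dt$, and finally multiplying through by $1/x$, reassembles precisely the four terms asserted in the lemma.

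I expect the main obstacle to be the bookkeeping in this integration by parts across the infinitely many jump discontinuities of $g$: correctly locating the jumps at $t=my/x$, pinning down the sign and the size $(x/(my))^2$ of the jump of $g$ there, verifying that the telescoping of the boundary terms over infinitely many subintervals converges, and confirming that the resulting series --- which is exactly what furnishes the term $\tfrac{x}{2y^2}\sum_{m>1/y}\widetilde B_2(my/x)/m^2$ --- fits together with the left-endpoint and remaining-integral contributions to give the stated identity. One should also dispose of the harmless special case $1/y\in\mathbb Z$, in which the first jump point coincides with $t=1/x$; beyond that, everything is routine manipulation of absolutely convergent integrals.
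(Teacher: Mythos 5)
Your proposal is correct and follows essentially the same route as the paper: both pass from $K_2(x,y)=\frac1x\int_{1/x}^{\infty}\widetilde B_1(t)\,\widetilde B_1(xt/y)\,t^{-2}\,dt$ to the stated identity by integrating by parts against $\frac12\,d\widetilde B_2(t)$, with the jumps of $t\mapsto\widetilde B_1(xt/y)$ at $t=my/x$ producing the sum over $m>1/y$. The only cosmetic difference is that the paper carries out the integration by parts in Riemann--Stieltjes form (isolating $d\lfloor xt/y\rfloor$), whereas you partition $[1/x,\infty)$ at the jump points and apply classical integration by parts on each piece --- the two bookkeepings are equivalent and produce identical terms.
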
 
\begin{proof} 
Let $0< x,y\leq 1$. By \eqref{K2symmetrically} and \eqref{KtoTildeB1}, 
\begin{equation*} 
K_2(x,y) = \int_{0+}^1 \widetilde B_1 \left( \frac{1}{xz}\right) \widetilde B_1 \left( \frac{1}{yz}\right) dz = 
\frac{1}{x}\int_{\frac{1}{x}}^{\infty} \widetilde B_1 (t) \widetilde B_1 \left( \frac{xt}{y}\right) \frac{dt}{t^2} . 
\end{equation*} 
Since $\int_a^b \widetilde B_1 (t) dt = \frac{1}{2}\widetilde B_2 (b) - \frac{1}{2}\widetilde B_2 (a)$ 
for $a,b\in{\mathbb R}$, it follows from the above equations that 
\begin{align*} 
K_2(x,y) &= 
\frac{1}{2x}\int_{\left( \frac{1}{x}\right){\scriptscriptstyle +}}^{\infty} 
t^{-2} \widetilde B_1 \left( \frac{xt}{y}\right) d\widetilde B_2 (t) \\ 
 &= \frac{1}{2x}\left( 
\left[ t^{-2} \widetilde B_1 \left( \frac{xt}{y}\right) \widetilde B_2 (t)\right]_{\left( \frac{1}{x}\right){\scriptscriptstyle +}}^{\infty} - 
\int_{\left( \frac{1}{x}\right){\scriptscriptstyle +}}^{\infty} 
\widetilde B_2 (t) d\left( t^{-2} \widetilde B_1 \left( \frac{xt}{y}\right)\right)\right) 
\end{align*} 
(the latter equality being obtained through integration by parts). 
By \eqref{DefTildeB1andB2} we have here  
$t^{-2} \widetilde B_1 \left( \frac{xt}{y}\right) \widetilde B_2 (t) \rightarrow 0$ as 
$t\rightarrow\infty$; since the function $t\mapsto\{ t\}$ is right-continuous, we have also 
$t^{-2} \widetilde B_1 \left( \frac{xt}{y}\right) \widetilde B_2 (t) \rightarrow 
x^2 \widetilde B_1 \left( \frac{1}{y}\right) \widetilde B_2 \left( \frac{1}{x}\right)$ as 
$t\rightarrow \left( \frac{1}{x}\right){\scriptstyle +}$. We have, moreover, 
\begin{multline*} 
\int_{\left( \frac{1}{x}\right){\scriptscriptstyle +}}^{\infty} 
\widetilde B_2 (t) d\left( t^{-2} \widetilde B_1 \left( \frac{xt}{y}\right)\right) \\ 
= \int_{\left( \frac{1}{x}\right){\scriptscriptstyle +}}^{\infty} 
\widetilde B_2 (t) \widetilde B_1 \left( \frac{xt}{y}\right) d\left( t^{-2} \right) + 
\int_{\left( \frac{1}{x}\right){\scriptscriptstyle +}}^{\infty} 
\widetilde B_2 (t) t^{-2} d\left(  \widetilde B_1 \left( \frac{xt}{y}\right)\right) \\  
= -2 \int_{\frac{1}{x}}^{\infty} 
\widetilde B_2 (t) \widetilde B_1 \left( \frac{xt}{y}\right) t^{-3} dt + 
\int_{\left( \frac{1}{x}\right){\scriptscriptstyle +}}^{\infty} 
\widetilde B_2 (t) t^{-2} d\left\{ \frac{xt}{y}\right\}    
\end{multline*} 
and 
\begin{align*} 
\int_{\left( \frac{1}{x}\right){\scriptscriptstyle +}}^{\infty} 
\widetilde B_2 (t) t^{-2} d\left\{\frac{xt}{y}\right\}  
 &= \int_{\frac{1}{x}}^{\infty} 
\widetilde B_2 (t) t^{-2} d\left( \frac{xt}{y}\right) - 
\int_{\left( \frac{1}{x}\right){\scriptscriptstyle +}}^{\infty} 
\widetilde B_2 (t) t^{-2} d\left\lfloor \frac{xt}{y}\right\rfloor \\ 
 &=\frac{x}{y} \int_{\frac{1}{x}}^{\infty} \widetilde B_2 (t) t^{-2} dt - 
 \sum_{m > \frac{1}{y}} \widetilde B_2 \left( \frac{ym}{x}\right) \left( \frac{ym}{x}\right)^{-2} ,
\end{align*} 
and so we obtain what is stated in the lemma
\end{proof} 
\begin{lemma} 
When $0<x,y\leq 1$, one has: 
\begin{equation}\label{K2OverxBound}  
\frac{\left| K_2 (x,y)\right|}{x} \leq 
\frac{1}{12} + 
\frac{x}{\left( 36 \sqrt{3}\right) y} + 
\frac{1}{2 y^2} \Biggl| \sum_{m > \frac{1}{y}} 
\frac{\widetilde B_2 \left( \frac{my}{x}\right)}{m^2} 
\Biggr| , 
\end{equation} 
\begin{equation}\label{K2BoundOffDiagonal}  
\left| K_2 (x,y)\right| \leq 
\left({\textstyle\frac{1}{4} + \frac{1}{36 \sqrt{3}}}\right) 
\cdot 
\frac{\min\{ x , y\}}{\max\{ x , y\}}  
\end{equation} 
and 
\begin{equation} \label{K2BoundOnDiagonal} 
\left| K_2 (x,x) - {\textstyle\frac{1}{12}} \right| \leq 
\left({\textstyle\frac{1}{6} + \frac{1}{36 \sqrt{3}}}\right)\cdot x . 
\end{equation}
\end{lemma}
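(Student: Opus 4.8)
The plan is to derive all three estimates from the explicit formula for $K_2(x,y)$ in Lemma~2.3. I would begin by recording the elementary pointwise bounds $|\widetilde B_1(t)|\le\tfrac12$, $|\widetilde B_2(t)|\le\tfrac16$ and $|\widetilde B_3(t)|\le\tfrac1{12\sqrt3}$ (valid for all $t\in\mathbb R$), which follow by locating the extrema of $B_1,B_2,B_3$ on $[0,1]$. Together with the identity $\int_a^b\widetilde B_2(t)\,dt=\tfrac13\bigl(\widetilde B_3(b)-\widetilde B_3(a)\bigr)$ (a consequence of $B_3'=3B_2$ and $B_3(0)=B_3(1)=0$), the bound on $\widetilde B_3$ lets one treat the tail integrals $\int_{1/x}^\infty\widetilde B_2(t)\,t^{-k}\,dt$ by integration by parts; in particular one gets $\int_{1/x}^\infty\widetilde B_2(t)\,t^{-2}\,dt=-\tfrac13\widetilde B_3(1/x)\,x^2+\tfrac23\int_{1/x}^\infty\widetilde B_3(t)\,t^{-3}\,dt$, whose modulus is at most $\tfrac{x^2}{36\sqrt3}+\tfrac{x^2}{36\sqrt3}=\tfrac{x^2}{18\sqrt3}$. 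This single estimate is the source of all the $\tfrac1{36\sqrt3}$ contributions in the statement.

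To prove \eqref{K2OverxBound} I would divide the four-term formula of Lemma~2.3 by $x$ and bound the pieces by the triangle inequality. The first term has modulus at most $\tfrac12\cdot\tfrac16\cdot\tfrac12=\tfrac1{24}$. The second, which after the division is $x^{-2}\int_{1/x}^\infty\widetilde B_2(t)\,\widetilde B_1(xt/y)\,t^{-3}\,dt$, has modulus at most $x^{-2}\cdot\tfrac16\cdot\tfrac12\int_{1/x}^\infty t^{-3}\,dt=\tfrac1{24}$. The third is $-\tfrac1{2xy}\int_{1/x}^\infty\widetilde B_2(t)\,t^{-2}\,dt$, of modulus at most $\tfrac1{2xy}\cdot\tfrac{x^2}{18\sqrt3}=\tfrac{x}{36\sqrt3\,y}$ by the estimate from the first paragraph. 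The fourth term, after the division, is exactly $\tfrac1{2y^2}\sum_{m>1/y}\widetilde B_2(my/x)\,m^{-2}$. Adding, and using $\tfrac1{24}+\tfrac1{24}=\tfrac1{12}$, gives \eqref{K2OverxBound}.

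For \eqref{K2BoundOffDiagonal} I would use that $K_2$ is symmetric to reduce to the case $x\le y$ (the case $y\le x$ then follows on applying \eqref{K2OverxBound} with $x$ and $y$ interchanged). Multiplying \eqref{K2OverxBound} by $x$ and using $x,y\le1$, the first two contributions are at most $\tfrac x{12}\le\tfrac1{12}\cdot\tfrac xy$ and $\tfrac{x^2}{36\sqrt3\,y}\le\tfrac1{36\sqrt3}\cdot\tfrac xy$. For the third I would combine $|\widetilde B_2|\le\tfrac16$ with the telescoping tail bound $\sum_{m>1/y}m^{-2}\le\lfloor1/y\rfloor^{-1}$ and with $y\lfloor1/y\rfloor>\tfrac12$ (true for every $y\in(0,1]$, since writing $1/y=n+\theta$ with $n\in\mathbb N$ and $0\le\theta<1$ gives $y\lfloor1/y\rfloor=n/(n+\theta)\ge n/(n+1)\ge\tfrac12$), which bounds it by $\tfrac{x}{2y^2}\cdot\tfrac16\cdot\tfrac1{\lfloor1/y\rfloor}=\tfrac{x}{12y}\cdot\tfrac1{y\lfloor1/y\rfloor}\le\tfrac16\cdot\tfrac xy$. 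Since $\tfrac1{12}+\tfrac16=\tfrac14$ and here $\tfrac xy=\min\{x,y\}/\max\{x,y\}$, this is \eqref{K2BoundOffDiagonal}.

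Finally, for \eqref{K2BoundOnDiagonal} I would not specialise Lemma~2.3 to $y=x$ but return to \eqref{K2symmetrically}: since $K$ is symmetric, $K_2(x,x)=\int_0^1 K(x,z)^2\,dz$, and by \eqref{KtoTildeB1} and the substitution $t=1/(xz)$ this equals $\tfrac1x\int_{1/x}^\infty\widetilde B_1(t)^2\,t^{-2}\,dt$. By \eqref{DefTildeB1andB2}, $\widetilde B_1(t)^2=(\{t\}-\tfrac12)^2=\widetilde B_2(t)+\tfrac1{12}$, so $K_2(x,x)=\tfrac1{12}\cdot\tfrac1x\int_{1/x}^\infty t^{-2}\,dt+\tfrac1x\int_{1/x}^\infty\widetilde B_2(t)\,t^{-2}\,dt=\tfrac1{12}+\tfrac1x\int_{1/x}^\infty\widetilde B_2(t)\,t^{-2}\,dt$, and the last integral has modulus at most $\tfrac{x^2}{18\sqrt3}$ by the first paragraph; hence $\bigl|K_2(x,x)-\tfrac1{12}\bigr|\le\tfrac x{18\sqrt3}$, which is stronger than \eqref{K2BoundOnDiagonal} since $\tfrac1{18\sqrt3}<\tfrac16+\tfrac1{36\sqrt3}$. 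The extremum computations and integrations by parts are routine; the step I expect to need the most care is converting the tail sum in \eqref{K2OverxBound} into a bound of the shape (small constant)$\,\cdot\,\min\{x,y\}/\max\{x,y\}$ --- it is the pairing of $\sum_{m>1/y}m^{-2}\le\lfloor1/y\rfloor^{-1}$ with $y\lfloor1/y\rfloor>\tfrac12$ that makes the stated constant $\tfrac14+\tfrac1{36\sqrt3}$ emerge --- while the other point where one must spot the right idea is treating \eqref{K2BoundOnDiagonal} through the identity $\widetilde B_1^2=\widetilde B_2+\tfrac1{12}$ rather than by brute force on Lemma~2.3.
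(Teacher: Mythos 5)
Your proof is correct, and for \eqref{K2OverxBound} and \eqref{K2BoundOffDiagonal} it is essentially the paper's own argument: bound the four pieces of Lemma~2.3 by the triangle inequality using $|\widetilde B_1|\le\frac12$, $|\widetilde B_2|\le\frac16$, and the tail bound $|\int_{1/x}^\infty\widetilde B_2(t)t^{-2}\,dt|\le x^2/(18\sqrt3)$; then for \eqref{K2BoundOffDiagonal} reduce by symmetry to $x\le y$ and bound the tail sum by roughly $2y/12$. (The paper writes the last step as $\sum_{m>1/y}m^{-2}<y^2+y$ followed by $y\le1$; your pairing of $\sum_{m>N}m^{-2}\le1/N$ with $y\lfloor1/y\rfloor>\frac12$ is a cosmetically different route to the same constant.)

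Where you genuinely depart from the paper is in \eqref{K2BoundOnDiagonal}. The paper keeps to the framework of Lemma~2.3: it records the intermediate bound
\[
\biggl|\frac{K_2(x,y)}{x}-\frac{1}{2y^2}\sum_{m>1/y}\frac{\widetilde B_2(my/x)}{m^2}\biggr|\le\frac{1}{12}+\frac{x}{36\sqrt3\,y},
\]
specialises to $y=x$, and then exploits the fact that $\widetilde B_2(m)=\frac16$ for every integer $m$, together with $\sum_{m>1/x}m^{-2}\in[x-x^2,\,x+x^2]$, to isolate the main term $\frac{1}{12x}$. You instead bypass Lemma~2.3 entirely: you write $K_2(x,x)=\int_0^1K(x,z)^2\,dz=\frac1x\int_{1/x}^\infty\widetilde B_1(t)^2\,t^{-2}\,dt$ and apply the identity $\widetilde B_1(t)^2=\widetilde B_2(t)+\frac{1}{12}$, so that the main term $\frac{1}{12}$ pops out exactly and only the tail integral $\frac1x\int_{1/x}^\infty\widetilde B_2(t)\,t^{-2}\,dt$ remains. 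This is cleaner, and it yields the strictly sharper constant $\frac{1}{18\sqrt3}\approx0.032$ in place of the paper's $\frac16+\frac{1}{36\sqrt3}\approx0.183$. The paper's version is not wrong, but it loses the additive $\frac16$ because it allows $\widetilde B_2(my/x)$ to range over $[-\frac1{12},\frac16]$ in the intermediate bound before restoring its actual value $\frac16$ at $y=x$; your algebraic identity avoids that double-counting. Both approaches are valid; yours proves a stronger statement than the lemma asserts.
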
 
\begin{proof} 
The result \eqref{K2OverxBound} follows from Lemma~2.3 by applying the triangle inequality and 
then observing that one has: 
\begin{equation*} 
\left| \widetilde B_2 \left( \frac{1}{x}\right) \widetilde B_1 \left( \frac{1}{y}\right)\right| 
\leq {\textstyle\left(\frac{1}{6}\right)\left(\frac{1}{2}\right) = \frac{1}{12}} , 
\end{equation*} 
\begin{equation*} 
\biggl|\int_{\frac{1}{x}}^{\infty} \widetilde B_2 (t) 
\widetilde B_1 \left( \frac{x t}{y}\right) t^{-3} dt \biggr| 
\leq\int_{\frac{1}{x}}^{\infty} {\textstyle\frac{1}{12}} t^{-3} dt = {\textstyle\frac{1}{24}} x^2 
\end{equation*} 
and  
\begin{align*} 
\int_{\frac{1}{x}}^{\infty} \widetilde B_2 (t) \frac{dt}{t^2} 
&= {\textstyle\frac{1}{3}} \int_{\frac{1}{x}}^{\infty} t^{-2} d\widetilde B_3 (t) \\
&= {\textstyle\frac{1}{3}} \left( 
\left[ t^{-2} B_3 (t)\right]_{\frac{1}{x}}^{\infty} 
- \int_{\frac{1}{x}}^{\infty} B_3 (t) d\left( t^{-2}\right) \right) , 
\end{align*} 
where $B_3(t) = \{ t\}^3 - \frac{3}{2} \{ t\}^2 + \frac{1}{2} \{ t\}$, so that 
$\max_{t\in{\mathbb R}} \left| B_3(t)\right| = \frac{1}{12\sqrt{3}}$ and  
\begin{equation*} 
\biggl| \int_{\frac{1}{x}}^{\infty} B_2 (t) \frac{dt}{t^2} \biggr| 
\leq {\textstyle\frac{1}{36\sqrt{3}}} \left( \left(\frac{1}{x}\right)^{-2} 
+ \int_{\infty}^{\frac{1}{x}} d\left( t^{-2}\right) \right) = {\textstyle\frac{1}{18\sqrt{3}}} x^2 .
\end{equation*} 
\par 
We consider next \eqref{K2BoundOffDiagonal}. Since both sides of this result are 
invariant under the permutation $(x,y)\mapsto (y,x)$, we may assume (in our proof of it) that 
$0<x\leq y\leq 1$. By \eqref{K2OverxBound} and the uniform bound $\left| \widetilde B_2 (t)\right| \leq\frac{1}{6}$, 
we find that 
\begin{align*} 
\left| K_2 (x,y)\right|\cdot \frac{y}{x} &\leq 
\frac{y}{12} + \frac{x}{\left( 36 \sqrt{3}\right)} + 
\frac{1}{12 y} \sum_{m > \frac{1}{y}} \frac{1}{m^2} \\ 
 & \leq \frac{1}{12} + \frac{1}{36 \sqrt{3}} + 
 \frac{1}{12 y} \cdot\left( y^2 + y\right) =  \frac{2 + y}{12} + \frac{1}{36 \sqrt{3}}. 
\end{align*} 
The required case ($0<x\leq y\leq 1$) of \eqref{K2BoundOffDiagonal} follows. 
\par 
In order to obtain \eqref{K2BoundOnDiagonal} (and so complete the proof of the corollary) 
we note firstly that our proof of \eqref{K2OverxBound} shows, in fact, that one has 
\begin{equation*} 
\biggl| \frac{K_2 (x,y)}{x} - \frac{1}{2 y^2} \sum_{m > \frac{1}{y}} 
\frac{\widetilde B_2 \left( \frac{my}{x}\right)}{m^2} \biggr| \leq 
\frac{1}{12} + \frac{x}{\left( 36 \sqrt{3}\right) y} 
\quad\text{($0<x,y\leq 1$).}
\end{equation*} 
By specialising this to the case in which $y=x\in (0,1]$, and then noting that 
\begin{equation*} 
\sum_{m>\frac{1}{x}} \frac{\widetilde B_2 (m)}{m^2} = \sum_{m>\frac{1}{x}} {\textstyle\frac{1}{6}} m^{-2} 
\in \left[ {\textstyle\frac{1}{6}} \left( x - x^2 \right) , 
{\textstyle\frac{1}{6}} \left( x + x^2 \right) \right] , 
\end{equation*} 
one arrives at the bound
$\left| K_2(x,x) - \frac{1}{12}\right| /x \leq \frac{1}{6} + \frac{1}{36 \sqrt{3}}$. 
The result \eqref{K2BoundOnDiagonal} follows. 
\end{proof} 
\begin{remarks} With regard to the above estimate \eqref{K2BoundOnDiagonal}, it should 
be noted that, by a method entirely different from 
the methods used in the proofs of Lemmas~2.3 and~2.4, it can be shown that one has  
\begin{equation}\label{K2xxactly} 
K_2 (x,x) = K^2 (1,x) 
 + \frac{\left\lfloor \frac1x\right\rfloor \log\left(\frac1x\right) 
 - \frac1x + \log\sqrt{\frac{2\pi}{x}} 
 - \log\left( \left\lfloor \frac1x \right\rfloor !\right)}{\left( \frac{x}{2}\right)} \;,    
\end{equation} 
for $0<x\leq 1$. 
We believe that the same method will also yield an interesting formula 
for $K_2 (x,\alpha x)$ in the more general case where one has 
$\alpha\in{\mathbb Q}$ and $0<x,\alpha x\leq 1$. 
Notice that, by \eqref{K2xxactly} and what was noted just after \eqref{Stirling_Alt}, 
one has 
\begin{equation*} 
{\textstyle\frac12} x \left( K^2 (1,x) - K_2 (x,x)\right) 
= \lim_{\varepsilon\rightarrow 0+} \int_{\varepsilon}^1 \frac{K(x,y) dy}{y} \;,
\end{equation*} 
for $0<x\leq 1$. Having obtained this result via a somewhat indirect route, we 
are curious to know if there exists a more direct proof of it. 
\end{remarks} 
\begin{definitions} 
For $r\in (-1,\infty)$ and $a,b\in [0,1]$, we put  
\begin{equation*}
\Delta_r (a,b) := 
\int_0^1 \left| K(a,z) - K(b,z)\right| z^r dz 
\end{equation*} 
(the existence of this integral following from \eqref{HalfHilbertSchmidt}, for $p=1$, 
combined with the fact that $K$ is bounded on $[0,1]\times [0,1]$). 
\end{definitions}
Clearly $\Delta_r (b,b)=0$ for $r\in (-1,\infty)$ and $0\leq b\leq 1$. 
We have also the following lemma. 
\begin{lemma} 
Let $0<a_0\leq 1$. Suppose, moreover, that $0 < a_n \leq 1$ for all $n\in {\mathbb N}$, and 
that one has $\lim_{n\rightarrow\infty} a_n = a_0$. 
Then, for all $r\in (-1,\infty)$, 
one has $\lim_{n\rightarrow\infty} \Delta_r \left( a_n , a_0\right) = 0$. 
\end{lemma}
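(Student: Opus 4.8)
The plan is to reduce the assertion to an almost-everywhere pointwise convergence statement and then appeal to Lebesgue's dominated convergence theorem. By the definition \eqref{DefK}, for $0<z\le 1$ one has
\[
K(a_n,z)-K(a_0,z)=\left\{\frac{1}{a_0 z}\right\}-\left\{\frac{1}{a_n z}\right\},
\]
while $K(a_n,0)-K(a_0,0)=0$. Since $\{\alpha\}\in[0,1)$ for every real $\alpha$, the integrand $\bigl|K(a_n,z)-K(a_0,z)\bigr|z^r$ is bounded above, uniformly in $n$, by the function $z\mapsto z^r$; and because $r>-1$, this dominating function lies in $L^1\bigl([0,1]\bigr)$ (with $\int_0^1 z^r\,dz=(r+1)^{-1}$). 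Hence it will suffice to prove that $K(a_n,z)\to K(a_0,z)$ as $n\to\infty$, for almost every $z\in[0,1]$.

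For the pointwise step, fix a point $z$ with $0<z\le 1$ such that $1/(a_0 z)\notin{\mathbb Z}$. The set of $z\in(0,1]$ not meeting this condition is $\{1/(m a_0):m\in{\mathbb N}\}\cap(0,1]$, which is countable, so (together with the single point $z=0$) it forms a set of Lebesgue measure zero; thus the set of admissible $z$ has full measure in $[0,1]$. For such a $z$ we have $1/(a_n z)\to 1/(a_0 z)$, since $a_n\to a_0>0$; and, as the map $\alpha\mapsto\{\alpha\}$ is continuous at every non-integer real number and $1/(a_0 z)$ is not an integer, it follows that $\{1/(a_n z)\}\to\{1/(a_0 z)\}$, i.e. $K(a_n,z)\to K(a_0,z)$. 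Combining this with the domination observed above, the dominated convergence theorem gives
\[
\lim_{n\to\infty}\Delta_r\bigl(a_n,a_0\bigr)=\lim_{n\to\infty}\int_0^1\bigl|K(a_n,z)-K(a_0,z)\bigr|z^r\,dz=0,
\]
as claimed. (Alternatively, one could avoid invoking dominated convergence and instead split the integral defining $\Delta_r(a_n,a_0)$ into a contribution from a small neighbourhood of the finitely many points $1/(m a_0)\in(0,1]$ with $m$ below a suitable threshold — where the integrand is trivially at most $z^r$ — and a contribution from the complement, on which $1/(a_n z)$ and $1/(a_0 z)$ share the same integer part once $n$ is large; but the dominated-convergence route is shorter.)

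The only point requiring care is the bookkeeping in the pointwise step: one must correctly identify the null exceptional set — namely $z=0$ together with the reciprocals $1/(m a_0)$ of the positive-integer multiples of $1/a_0$ — on which $z\mapsto\{1/(a_0 z)\}$ fails to be continuous, and one must make sure that the (possibly two-sided) convergence $a_n\to a_0$ really does force convergence of the fractional parts at the admissible points, which it does precisely because $\{\,\cdot\,\}$ is two-sidedly continuous away from the integers. Everything else is routine.
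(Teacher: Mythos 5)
Your proof is correct, and it takes a genuinely different route from the paper. You argue directly: the integrand $\bigl|K(a_n,z)-K(a_0,z)\bigr|z^r$ is dominated, uniformly in $n$, by the integrable function $z\mapsto z^r$ (since $|\{\alpha\}-\{\beta\}|<1$ and $r>-1$), and $K(a_n,z)\to K(a_0,z)$ pointwise for every $z\in(0,1]$ with $1/(a_0 z)\notin\mathbb{Z}$, the exceptional set being countable; Lebesgue's dominated convergence theorem then finishes the job. The paper instead performs the logarithmic change of variables $z=e^{-u}$ to recast $\Delta_r(a_n,a_0)$ in terms of the function $f_r(t):=e^{-(r+1)t}K(e^{-t},1)$ on $\mathbb{R}$, verifies $f_r\in L^1(\mathbb{R})$, and then invokes the continuity of translation in $L^1$ (via Wheeden--Zygmund, Theorem~8.19) together with the convergence of the scalar factor $e^{(r+1)\delta_n}\to 1$. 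Your argument is shorter and more elementary, exploiting the specific structure of $K$ (namely that its discontinuities in $z$, for fixed first argument, lie on a countable set) to get almost-everywhere pointwise convergence for free; the paper's $L^1$-translation argument is more abstract and would apply even if the pointwise behaviour of $K$ were less transparent, at the cost of a change of variables and an appeal to a less elementary theorem. Both are complete and rigorous.
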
 
\begin{proof} 
Suppose that $r>-1$. 
It follows from Definitions~2.5 and Equation~\eqref{DefK}, via a couple of changes of 
the variable of integration, that, for $n\in{\mathbb N}$, one has  
\begin{align}\label{Beweis-a1}
\Delta_r\left( a_n , a_0\right) &= \int_0^1 \left| K\left( a_n z , 1\right) - K\left( a_0 z , 1\right) \right| z^r dz 
\nonumber\\ 
 &= \int_0^{\infty} \left| K\left( e^{-(u+A_n)} , 1\right) - K\left( e^{-(u+A_0)} , 1\right) \right|  
e^{-(r+1)u} du\nonumber\\ 
 &= \frac{1}{a_0^{r+1}} \int_{A_0}^{\infty} \left| e^{(r+1)\delta_n} f_r\left( t + \delta_n\right) - f_r(t)\right| dt , 
\end{align} 
where $A_m = \log\left( 1/a_m\right)\in [0,\infty)$ ($m=0,1,2,\ldots\ $), 
$\delta_n = A_n - A_0\in{\mathbb R}$ ($n=1,2,3,\ldots\ $) 
and $f_r$ is the function defined on ${\mathbb R}$ by:
\begin{equation}\label{Beweis-a2} 
f_r(t) := \begin{cases} 
e^{-(r+1)t} K\left( e^{-t} , 1\right) & \text{if $t\geq 0$} , \\ 0 & \text{otherwise} . 
\end{cases} 
\end{equation} 
\par
By another change of variable, it follows from \eqref{Beweis-a2} and 
\eqref{DefK} that one has 
\begin{equation*} 
\int_{-\infty}^{\infty} \left| f_r(t)\right| dt = \int_0^1 \left| K(x,1)\right| x^r dx < \infty  
\end{equation*} 
(given that $r>-1$), so that $f_r$ is Lebesgue integrable on ${\mathbb R}$ (i.e. $f_r\in L^1 ({\mathbb R})$). 
\par
We observe now that, by the triangle inequality, it follows from 
\eqref{Beweis-a1} that one has 
\begin{equation}\label{Beweis-a3} 
0 \leq \Delta_0 \left( a_n , a_0\right) \leq \frac{c_n (r) + d_n (r)}{a_0^{r+1}} 
\qquad\text{($n\in{\mathbb N}$),} 
\end{equation} 
where: 
\begin{equation}\label{Beweis-a4} 
0\leq c_n (r) = \left| e^{(r+1)\delta_n} - 1 \right| \cdot \int_{A_0}^{\infty} 
\left| f_r\left( t + \delta_n\right)\right| dt 
\leq \left| e^{(r+1)\delta_n} - 1 \right| \cdot \int_{-\infty}^{\infty} \left| f_r (t)\right| dt  
\end{equation} 
and 
\begin{equation}\label{Beweis-a5} 
0\leq d_n (r) = \int_{A_0}^{\infty} \left| f_r\left( t + \delta_n\right) - f_r(t)\right| dt 
\leq \int_{-\infty}^{\infty} \left| f_r\left( t + \delta_n\right) - f_r(t)\right| dt . 
\end{equation} 
Since $\lim_{n\rightarrow\infty} a_n = a_0 > 0$, we have here 
$\lim_{n\rightarrow\infty} A_n = \log\left( 1 / a_0\right) = A_0$,  
so that $\lim_{n\rightarrow\infty} \delta_n = A_0 - A_0 = 0$. 
Therefore, given that $f_r$ is independent of $n$, and satisfies $f_r\in L^1 ({\mathbb R})$, 
it follows by \eqref{Beweis-a5} and the case $p=1$ of 
\cite[Theorem~8.19]{Wheeden and Zygmund 2015} that we have $\lim_{n\rightarrow\infty} d_n (r) = 0$. 
Moreover, since $0=\exp\left( \lim_{n\rightarrow\infty} (r+1)\delta_n\right) - 1  
= \lim_{n\rightarrow\infty} \left( \exp\left( (r+1)\delta_n\right) - 1\right)$, 
it follows from \eqref{Beweis-a4} that we have $\lim_{n\rightarrow\infty} c_n (r) = 0$. 
By \eqref{Beweis-a3} and our last two findings, we can deduce (as was required) that 
$\Delta_r \left( a_n , a_0\right) \rightarrow 0$ as $n\rightarrow\infty$. 
\end{proof} 
\begin{theorem} 
The function $K_2$ is continuous on 
$\left( [0,1]\times [0,1]\right) \backslash \left\{ (0,0)\right\}$. 
\end{theorem}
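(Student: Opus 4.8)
The plan is to establish continuity sequentially, using that $[0,1]\times[0,1]$ is a metric space. I would fix a point $(x_0,y_0)\in\left([0,1]\times[0,1]\right)\backslash\{(0,0)\}$ and an arbitrary sequence $\left((x_n,y_n)\right)_{n\in{\mathbb N}}$ in $[0,1]\times[0,1]$ with $x_n\to x_0$ and $y_n\to y_0$, and show that $K_2(x_n,y_n)\to K_2(x_0,y_0)$. Since one of $x_0,y_0$ must be positive, the symmetry of $K_2$ lets me assume $x_0>0$; I would then split into the cases $y_0>0$ and $y_0=0$.

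In the case $x_0>0$, $y_0>0$ one has $x_n>0$ and $y_n>0$ for all sufficiently large $n$. I would use the identity
\[
K(x_n,z)K(y_n,z)-K(x_0,z)K(y_0,z) = K(x_n,z)\bigl(K(y_n,z)-K(y_0,z)\bigr) + \bigl(K(x_n,z)-K(x_0,z)\bigr)K(y_0,z) ,
\]
together with \eqref{K2symmetrically} and the uniform bound $|K(x,y)|\leq\tfrac12$ (immediate from \eqref{DefK}), to obtain, in the notation of Definitions~2.5,
\[
\left|K_2(x_n,y_n)-K_2(x_0,y_0)\right| \leq \tfrac12\,\Delta_0(y_n,y_0) + \tfrac12\,\Delta_0(x_n,x_0) .
\]
Since $x_n\to x_0>0$ and $y_n\to y_0>0$, Lemma~2.6 (taken with $r=0$) shows that the right-hand side tends to $0$. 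This already gives continuity of $K_2$ on $(0,1]\times(0,1]$.

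In the case $x_0>0$, $y_0=0$ I would note that $K(0,z)=0$ for $0\leq z\leq 1$ by \eqref{DefK}, whence $K_2(x_0,0)=\int_0^1 K(x_0,z)K(0,z)\,dz=0$, and likewise $K_2(x_n,y_n)=0$ for every $n$ with $y_n=0$. For the cofinitely many $n$ with $y_n>0$, the relations $x_n\to x_0>0$ and $y_n\to 0$ force $0<y_n<x_n\leq 1$ once $n$ is large, so \eqref{K2BoundOffDiagonal} of Lemma~2.4 gives $\left|K_2(x_n,y_n)\right|\leq\left(\tfrac14+\tfrac{1}{36\sqrt3}\right)\,y_n/x_n\to 0$. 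Hence $K_2(x_n,y_n)\to 0=K_2(x_0,0)$. Combining the two cases with the symmetry of $K_2$ then covers every point of $\left([0,1]\times[0,1]\right)\backslash\{(0,0)\}$.

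The only real subtlety is that Lemma~2.6 is unavailable when the limiting coordinate is $0$ (indeed $\int_0^1|K(x_n,z)|\,dz$ does not tend to $0$ as $x_n\to 0$), so the behaviour near the two coordinate axes genuinely requires the separate off-diagonal estimate \eqref{K2BoundOffDiagonal} rather than any continuity-in-mean of $K$ itself. With that estimate in hand the argument is a routine assembly of results already established, and I do not anticipate a serious obstacle.
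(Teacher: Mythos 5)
Your proof is correct and follows essentially the same route as the paper's: the splitting $K(x_n,z)K(y_n,z)-K(x_0,z)K(y_0,z)$ into two products yields exactly the bound $\tfrac12\Delta_0(y_n,y_0)+\tfrac12\Delta_0(x_n,x_0)$ appearing in the paper's \eqref{K2Sandwich}, you invoke the same Lemma~2.6 (with $r=0$) for the case both coordinates are positive, and you use \eqref{K2BoundOffDiagonal} for the case one coordinate vanishes. The only cosmetic difference is that the paper reduces immediately to sequences lying in $(0,1]\times(0,1]$ while you handle the finitely many or cofinitely many $n$ with $y_n=0$ explicitly; both are fine.
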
 
\begin{proof} 
Since the kernel function $K_2 (x,y)$ is symmetric, it will be 
enough to show that it is continuous on $[0,1]\times (0,1]$. 
Note, moreover, that the definitions \eqref{DefK} and \eqref{K2symmetrically} imply that 
$K_2(0,y)$ is constant for $0\leq y\leq 1$, and so 
we need only show that one has $K_2(x_n,y_n)\rightarrow K_2(x,y)$ as $n\rightarrow\infty$, 
whenever it is the case that $(x_1,y_1),(x_2,y_2),(x_3,y_3),\ldots $ 
is a sequence of points in $(0,1]\times (0,1]$ 
that converges (with respect to the Euclidean metric) to a limit $(x,y)\in [0,1]\times (0,1]$. 
In the case just described one necessarily has both $x_n\rightarrow x\in [0,1]$ 
and $y_n\rightarrow y\in (0,1]$, 
in the limit as $n\rightarrow\infty$. 
By \eqref{K2symmetrically}, one has, moreover, 
\begin{align}\label{K2Sandwich} 
\left| K_2 \left( x_n,y_n \right) - K_2 (x,y)\right| &\leq  
\left| K_2 \left( x_n,y_n \right) - K_2 \left( x ,y_n \right)\right|  + 
\left| K_2 \left( x , y_n \right) - K_2 (x,y) \right| \nonumber\\
 &\leq  \int_0^1 \left| K\left( x_n , z \right) -  K(x,z)\right|\cdot \left| K\left( y_n , z\right)\right| dz \nonumber\\ 
 &\phantom{{\leq}} + 
\int_0^1 \left| K\left( x , z\right)\right| \cdot \left| K\left( y_n , z \right) -  K(y,z)\right| dz \nonumber\\ 
 &\leq {\textstyle\frac{1}{2}} \Delta_0 \left( x_n , x\right) + {\textstyle\frac{1}{2}} \Delta_0 \left( y_n , y\right)
\end{align}
(the last inequality following since, by \eqref{DefK}, 
$K$ has range $( -\frac{1}{2} , \frac{1}{2} ]$). 
By application of the case $r=0$ of Lemma~2.6, we find that when $x,y\in (0,1]$ one has both 
$\Delta_0 \left( x_n , x\right)\rightarrow 0$ and $\Delta_0 \left( y_n , y\right)\rightarrow 0$, 
as $n\rightarrow\infty$. By this and \eqref{K2Sandwich}, 
it follows that, when $x,y\in (0,1]$, one does have 
$\lim_{n\rightarrow\infty} K_2 \left( x_n,y_n \right) = K_2 (x,y)$ (as required).  
\par
In the remaining cases, where $x=0$ and $y\in (0,1]$, we note 
that we have $K_2 (x,y) = K_2 (0,y) = 0$, so that this proof will be complete  
once we are able to show that $K_2\left( x_n , y_n\right) \rightarrow 0$ as $n\rightarrow\infty$. 
With this in mind, we observe (firstly) that 
we have here $\lim_{n\rightarrow\infty} x_n / y_n = x / y = 0 / y = 0$, 
and (secondly) that the result \eqref{K2BoundOffDiagonal} of Lemma~2.4 implies that   
$\left| K_2 \left( x_n , y_n\right)\right| < x_n / y_n$ for all $n\in{\mathbb N}$. 
This shows that 
$\lim_{n\rightarrow\infty} K_2 \left( x_n , y_n\right) = 0$, 
so the proof is complete.  
\end{proof} 
\begin{corollary} 
For any constant $a\in[0,1]$, the functions $y\mapsto K_2 (a,y)$ and $y\mapsto K_2 (y,a)$ 
are continuous on $[0,1]$. 
\end{corollary}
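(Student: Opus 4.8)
The plan is to read this off from Theorem~2.7 almost immediately, with a small separate argument needed only at the point $a=0$. First I would observe that, by the symmetry of $K_2$ recorded just after \eqref{K2symmetrically}, one has $K_2(y,a)=K_2(a,y)$ for every $y\in[0,1]$, so it suffices to prove that the single function $y\mapsto K_2(a,y)$ is continuous on $[0,1]$, for each fixed $a\in[0,1]$.

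Next I would dispose of the case $a\in(0,1]$. For such an $a$, no point of the segment $\{a\}\times[0,1]$ coincides with $(0,0)$, so by Theorem~2.7 the kernel $K_2$ is (jointly) continuous at every point of that segment; the restriction of a jointly continuous function to this segment is a continuous function of $y$, which is exactly what is wanted. (Concretely: given $y_0\in[0,1]$ and $y_n\to y_0$ in $[0,1]$, the points $(a,y_n)$ converge to $(a,y_0)$ in $[0,1]\times[0,1]$, none of them being $(0,0)$, so $K_2(a,y_n)\to K_2(a,y_0)$ by Theorem~2.7.)

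Finally I would treat $a=0$. Here Theorem~2.7 is not directly applicable, since the single excluded point $(0,0)$ lies on the relevant segment $\{0\}\times[0,1]$. Instead I would invoke the observation already made in the proof of Theorem~2.7 — which follows at once from \eqref{DefK} and \eqref{K2symmetrically}, given that $K(0,z)=0$ for every $z\in[0,1]$ — namely that $K_2(0,y)$ is constant (in fact identically $0$) for $y\in[0,1]$. A constant function on $[0,1]$ is trivially continuous, so $y\mapsto K_2(0,y)$ is continuous on $[0,1]$, and, in view of the reduction made in the first paragraph, this completes the proof.

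There is essentially no obstacle here: the only point requiring any care is that Theorem~2.7 omits the point $(0,0)$, so the case $a=0$ cannot be handled by joint continuity alone; but the vanishing of $K_2(0,\cdot)$ on $[0,1]$ resolves this at once.
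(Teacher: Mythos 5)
Your proof is correct and follows essentially the same route as the paper: for $a\in(0,1]$ it appeals to the joint continuity of $K_2$ on $\left([0,1]\times[0,1]\right)\setminus\{(0,0)\}$ from Theorem~2.7, and for $a=0$ it notes that $K_2(0,\cdot)\equiv 0$. The symmetry reduction and the explicit sequence argument are minor elaborations of what the paper leaves implicit.
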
 
\begin{proof} 
The cases with $0<a\leq 1$ follow immediately from Theorem~2.7:  
as for the remaining case, where one has $a=0$, it is enough that we observe   
that one has $K_2 (0,y) = K_2 (y,0)=0$ for $0\leq y\leq 1$. 
\end{proof} 
\begin{theorem} 
The function $K_2 : [0,1]\times[0,1]\rightarrow {\mathbb R}$ is not continuous at the point $(0,0)$.  
Furthermore, the set of functions $f : [0,1]\times[0,1]\rightarrow {\mathbb R}$ 
such that the set ${\cal I}_f := \{ (x,y) \in [0,1]\times [0,1] : f(x,y) = K_2 (x,y)\}$ is dense in $[0,1]\times [0,1]$ does not contain one that is continuous at the point $(0,0)$. 
\end{theorem}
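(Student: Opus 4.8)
The plan is to prove the two assertions separately, the first being almost immediate and the second following from a density argument that builds on it. For the first assertion I would note that \eqref{DefK} and \eqref{K2symmetrically} give $K_2(0,0) = \int_0^1 K(0,z)K(0,z)\,dz = 0$, whereas \eqref{K2BoundOnDiagonal} shows $K_2(t,t)\to\tfrac{1}{12}$ as $t\to 0+$; since $\tfrac{1}{12}\neq 0$, the function $K_2$ is not continuous at $(0,0)$.

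For the second (main) assertion I would argue by contradiction: suppose $f:[0,1]\times[0,1]\to{\mathbb R}$ is continuous at $(0,0)$ with ${\cal I}_f$ dense, and put $c:=f(0,0)$. The key point is that, although $K_2$ behaves badly at $(0,0)$, it is continuous at every other point of $[0,1]\times[0,1]$ by Theorem~2.7, so we may exploit its values at points lying close to, but not at, the origin. First I would run the following scheme with the boundary points $q_n:=(\tfrac{1}{4n},0)$: each $q_n\neq(0,0)$, so $K_2$ is continuous at $q_n$ and $K_2(q_n)=0$; choosing a ball $B_n$ about $q_n$ of radius less than $\tfrac{1}{8n}$ on which $|K_2|<\tfrac1n$, and then using the density of ${\cal I}_f$ to pick $p_n\in B_n\cap{\cal I}_f$, one obtains a sequence with $p_n\to(0,0)$ and $f(p_n)=K_2(p_n)\to 0$; continuity of $f$ at $(0,0)$ then forces $c=0$.

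Next I would repeat the same scheme with the diagonal points $q_n':=(\tfrac{1}{4n},\tfrac{1}{4n})$: again $K_2$ is continuous at each $q_n'$, while \eqref{K2BoundOnDiagonal} gives $K_2(q_n')\to\tfrac{1}{12}$; choosing balls $B_n'$ about $q_n'$ on which $K_2$ stays within $\tfrac1n$ of $K_2(q_n')$ and picking $p_n'\in B_n'\cap{\cal I}_f$, one gets $p_n'\to(0,0)$ with $f(p_n')=K_2(p_n')\to\tfrac1{12}$, whence $c=\tfrac{1}{12}$. Combining the two conclusions gives $0=c=\tfrac{1}{12}$, which is absurd, so no such $f$ exists. (The first assertion is in fact the special case $f=K_2$, for then ${\cal I}_f=[0,1]\times[0,1]$, so it could alternatively be deduced from the second.)

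I expect the only care needed is bookkeeping: taking the balls $B_n$ and $B_n'$ small enough both to control the oscillation of $K_2$ (via its continuity at the centre) and to guarantee $p_n,p_n'\to(0,0)$, and verifying that each such ball does meet ${\cal I}_f$ (which is exactly where density enters). There is no analytic obstacle beyond the estimates already in Lemma~2.4 and Theorem~2.7; the conceptual point that makes the statement non-trivial is that one must approach $(0,0)$ through points at which $K_2$ is continuous, rather than reading off nearby values of $K_2$ directly, since arbitrarily close to the origin $K_2$ takes values near $0$ (along the axes) and values near $\tfrac1{12}$ (along the diagonal).
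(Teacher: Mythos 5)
Your proposal is correct and, at the conceptual level, is the same argument as the paper's: approach $(0,0)$ through points of $\mathcal{I}_f$ chosen close to two different families where $K_2$ is continuous (by Theorem~2.7) but exhibits incompatible limiting behaviour, and use this to force two contradictory values of $f(0,0)$. The only substantive difference is your choice of families. The paper works throughout with rays $y=\alpha x$ for $\alpha\in(0,1]$, taking $\alpha=1$ (getting limit $\frac1{12}$ from \eqref{K2BoundOnDiagonal}) and then $\alpha=\frac{5}{16}$ (getting a bound strictly below $\frac1{12}$ from \eqref{K2BoundOffDiagonal}). You instead take $\alpha=1$ and $\alpha=0$ (the axis), and observe that $K_2(x,0)=0$ identically, which gives the incompatible value $0$ outright; this is a slight simplification since it avoids any appeal to \eqref{K2BoundOffDiagonal} in the contradiction step. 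Both variants rely on exactly the same two ingredients (Theorem~2.7 for local continuity of $K_2$ away from the origin, and Lemma~2.4 for the diagonal behaviour), and the density hypothesis is used identically. Your bookkeeping (balls $B_n$ of shrinking radius, $p_n\in B_n\cap\mathcal{I}_f$, etc.) is sound, including the detail that continuity at boundary points like $(\frac{1}{4n},0)$ is relative to $[0,1]^2$, so $B_n\cap[0,1]^2$ is the correct open set in which to invoke density.
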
 
\begin{proof} 
For $f=K_2$, one has ${\cal I}_f = [0,1]\times [0,1]$. The first part of the theorem  
is therefore implied by the second part, and so a proof of the second part is 
all that is required. 
\par 
We adopt the method of `proof by contradiction'. Suppose that the 
second part of the theorem is false. There must then exist a function 
$f : [0,1]\times[0,1]\rightarrow {\mathbb R}$ that is continuous at $(0,0)$ and that (at the same time) 
satisfies $f(x,y)=K_2(x,y)$ for a set of points $(x,y)$ that is dense in $[0,1]\times[0,1]$. 
It follows from the latter part of this that if $\alpha\in (0,1]$ and $(\varepsilon_n)$ is an infinite 
sequence of positive numbers, then  there exists, for each $n\in{\mathbb N}$, 
some pair of real numbers $x_n,y_n$ satisfying both 
\begin{equation}\label{near1overN} 
\frac{1}{n+\varepsilon_n} < x_n < \frac{1}{n} 
\quad \text{and}\quad 
\frac{\alpha}{n+\varepsilon_n} < y_n < \frac{\alpha}{n} 
\end{equation} 
and 
\begin{equation}\label{fEqualK2}
f\left( x_n , y_n\right) = K_2\left( x_n , y_n\right) . 
\end{equation} 
\par 
Let $\alpha$ satisfy $0<\alpha\leq 1$. 
By Theorem~2.7 the function $K_2$ is continuous at each point in 
the sequence 
$(1 , \alpha ),(\frac{1}{2},\frac{\alpha}{2}),(\frac{1}{3},\frac{\alpha}{3}),\ldots $ . 
Therefore, for each $n\in{\mathbb N}$, there exists some number $\varepsilon_n > 0$ such 
that one has 
\begin{equation}\label{K2near1overN}
(x,y)\in \left( \frac{1}{n+\varepsilon_n} , \frac{1}{n} \right) \times 
\left(\frac{\alpha}{n+\varepsilon_n} , \frac{\alpha}{n} \right) 
\Longrightarrow  
\left| K_2 (x,y) - K_2\left( \frac{1}{n} , \frac{\alpha}{n}\right)\right| < \frac{1}{n} . 
\end{equation} 
Thus (bearing in mind the conclusions of the previous paragraph) we deduce the existence of 
sequences, $\varepsilon_1,\varepsilon_2,\varepsilon_3,\ldots $ and 
$(x_1,y_1),(x_2,y_2),(x_3,y_3),\ldots $, such that when $n\in{\mathbb N}$ one has 
both $\varepsilon_n > 0$ and what is stated in \eqref{near1overN}, \eqref{fEqualK2} and  
\eqref{K2near1overN}. Considering now any one such choice of this pair of sequences, 
it follows by \eqref{near1overN}-\eqref{K2near1overN} that, for all $n\in{\mathbb N}$, one has  
\begin{equation*} 
\left| f \left( x_n , y_n \right) - K_2\left( \frac{1}{n} , \frac{\alpha}{n}\right)\right| = 
\left| K_2 \left( x_n , y_n \right) - K_2\left( \frac{1}{n} , \frac{\alpha}{n}\right)\right| < \frac{1}{n} .  
\end{equation*}
Since $f$ is continuous at $(0,0)$, and since \eqref{near1overN} implies 
that $(x_n,y_n)\rightarrow (0,0)$ (with respect to the Euclidean metric) as $n\rightarrow\infty$, 
we have here $\lim_{n\rightarrow\infty} f(x_n,y_n) = f(0,0)$, and so (given that 
$\lim_{n\rightarrow\infty} \frac{1}{n} = 0$) are able to conclude that 
\begin{equation*} 
f(0,0) = \lim_{n\rightarrow\infty} K_2\left( \frac{1}{n} , \frac{\alpha}{n}\right) . 
\end{equation*} 
Note that $\alpha$ here denotes an arbitrary point in the interval $(0,1]$, 
so that it has now been established that the last equality above 
holds for all $\alpha\in (0,1]$. 
By considering the special case $\alpha = 1$, 
we deduce that 
\begin{equation*} 
f(0,0) = \lim_{n\rightarrow\infty} K_2\left( \frac{1}{n} , \frac{1}{n}\right) 
= \lim_{n\rightarrow\infty} {\textstyle\frac{1}{12}} \cdot \left( 1 + O\left( n^{-1}\right)\right)  
= {\textstyle\frac{1}{12}}  
\end{equation*} 
(the middle equality here holding by virtue of \eqref{K2BoundOnDiagonal}). 
Therefore, for each fixed choice of $\alpha\in (0,1]$, 
we have $\lim_{n\rightarrow\infty} K_2\left( n^{-1} , \alpha n^{-1}\right) = \frac{1}{12}$. 
The result \eqref{K2BoundOffDiagonal}, however, shows that one has 
$\left| K_2\left( \frac{1}{n} , \frac{\alpha}{n}\right)\right| 
\leq \bigl( \frac{1}{4} + \frac{1}{36 \sqrt{3}}\bigr) \alpha < \frac{4}{15}\alpha$ 
for $0<\alpha\leq 1$, $n\in{\mathbb N}$. In particular, when $\alpha = \frac{5}{16}$ (for example), 
one has 
$\frac{1}{12}=\frac{4}{15}\alpha > \sup\bigl\{ K_2\left( \frac{1}{n} , \frac{\alpha}{n}\right) : n\in{\mathbb N}\bigr\} $. 
This is incompatible with our earlier finding that 
$\lim_{n\rightarrow\infty} K_2\left( n^{-1} , \alpha n^{-1}\right) = \frac{1}{12}$ 
if $0<\alpha\leq 1$. In light of the contradiction evident here, 
we are left with no option but to conclude that 
the second part of the theorem cannot be false; we have therefore shown it to be (instead) true, which 
is all that we need to complete this proof.
\end{proof} 
\begin{theorem} All eigenfunctions of $K$ (including, in particular, the functions 
$\phi_1,\phi_2,\phi_3,\ldots $) are continuous on $[0,1]$. 
\end{theorem}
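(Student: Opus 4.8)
The plan is to deduce the continuity of an arbitrary eigenfunction $\phi$ of $K$ from the \emph{smoother} integral equation \eqref{K2Eigenfunction} that $\phi$ is already known to satisfy, namely $\phi(x)=\lambda^2\int_0^1 K_2(x,y)\phi(y)\,dy$ for every $x\in[0,1]$ (where $\lambda$ is the eigenvalue associated with $\phi$), together with the continuity properties of the iterated kernel established in Theorem~2.7 and Corollary~2.8. Although $K$ itself is (as a function of either variable) discontinuous at every point, its iterate $K_2$ is continuous on $\bigl([0,1]\times[0,1]\bigr)\backslash\{(0,0)\}$ and is bounded --- and that is precisely what one needs in order to pass a limit through the integral sign.

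So, let $\phi$ and $\lambda$ be as above, fix $x_0\in[0,1]$, and let $(x_n)$ be any sequence of points of $[0,1]$ with $x_n\to x_0$; it suffices to show $\phi(x_n)\to\phi(x_0)$. Put $g_n(y):=K_2(x_n,y)\phi(y)$ and $g_0(y):=K_2(x_0,y)\phi(y)$, each of which is a measurable function of $y\in[0,1]$ (by Corollary~2.8 the maps $y\mapsto K_2(a,y)$ are continuous, and $\phi$ is measurable). By \eqref{K2symmetrically} and the fact, noted in the proof of Theorem~2.7, that $K$ has range $(-\tfrac12,\tfrac12]$, one has the uniform bound $|K_2(x,y)|\le\tfrac14$ on $[0,1]\times[0,1]$; hence $|g_n(y)|\le\tfrac14|\phi(y)|$ for all $n$ and $y$, and $\tfrac14|\phi|$ belongs to $L^1\bigl([0,1]\bigr)$ because $\phi\in L^2\bigl([0,1]\bigr)\subseteq L^1\bigl([0,1]\bigr)$ (Cauchy--Schwarz on the finite-measure space $[0,1]$). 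Moreover, by Theorem~2.7 the kernel $K_2$ is continuous at $(x_0,y)$ for every $y\in(0,1]$, so that $K_2(x_n,y)\to K_2(x_0,y)$, and therefore $g_n(y)\to g_0(y)$, for every $y\in(0,1]$ --- that is, for almost every $y\in[0,1]$.

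The hypotheses of Lebesgue's dominated convergence theorem being met, we get $\int_0^1 g_n(y)\,dy\to\int_0^1 g_0(y)\,dy$, and multiplying by $\lambda^2$ and applying \eqref{K2Eigenfunction} yields $\phi(x_n)\to\phi(x_0)$, as wanted. Since $x_0\in[0,1]$ was arbitrary this shows $\phi$ is continuous on $[0,1]$, and as $\phi_1,\phi_2,\ldots$ are eigenfunctions of $K$ the assertion about them follows. I do not foresee a substantial obstacle: the two ideas that make it work --- using $K_2$ in place of $K$ to buy continuity, and exploiting the boundedness of $K_2$ so that $\tfrac14|\phi|$ serves as an $L^1$ majorant for every $g_n$ --- are both already available. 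The only place calling for a little care is $x_0=0$, but there the sole point at which $K_2(x_n,\cdot)$ might fail to converge is $y=0$, a set of measure zero (and in fact $K_2(x_n,0)=0=K_2(0,0)$ there in any case).
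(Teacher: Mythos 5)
Your proof is correct and follows essentially the same approach as the paper: pass to the iterated kernel via \eqref{K2Eigenfunction}, observe that $|K_2|\le\tfrac14$ gives the $L^1$ majorant $\tfrac14|\phi|$, and apply dominated convergence using the continuity of $K_2$ away from $(0,0)$. The only cosmetic difference is that you invoke Theorem~2.7 for the pointwise convergence $K_2(x_n,y)\to K_2(x_0,y)$ on $(0,1]$ whereas the paper cites its Corollary~2.8 (which covers all of $[0,1]$); both are adequate since almost-everywhere convergence suffices.
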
 
\begin{proof} 
It will be enough to show that one has 
\begin{equation}\label{Beweis-b1} 
\lim_{n\rightarrow\infty} \phi\left( x_n\right) = \phi\left( x_0\right) 
\end{equation}
if $\phi$ is an eigenfunction of $K$ and   
$x_0,x_1,x_2,\ldots\ $ a sequence of elements of $[0,1]$ satisfying $x_n\rightarrow x_0$ 
as $n\rightarrow\infty$. Accordingly, we suppose now that the conditions just mentioned 
(after \eqref{Beweis-b1}) are satisfied. 
By \eqref{K2Eigenfunction}, we have 
\begin{equation}\label{Beweis-b2} 
\phi \left( x_n \right) = \int_0^1 f_n (y) dy\qquad\text{($n=0,1,2,\ldots\ $),} 
\end{equation} 
where $f_n (y) := \lambda^2 K_2\left( x_n , y\right) \phi (y)$, with $\lambda$ 
being the relevant eigenvalue of $K$. 
\par
From \eqref{Beweis-b2} it follows (implicitly) that all functions in 
the sequence $f_1,f_2,f_3,\ldots\ $ are measurable on $[0,1]$. 
Note also that, by Corollary~2.8, we have 
$K_2\left( x_0 , y\right) = K_2\left( \lim_{n\rightarrow\infty} x_n , y\right) 
= \lim_{n\rightarrow\infty} K_2\left( x_n , y\right)$ for $0\leq y\leq 1$, and 
so it is certainly the case that one has $\lim_{n\rightarrow\infty} f_n (y) = f_0 (y)$ 
almost everywhere in $[0,1]$. In view of the two points just noted, it 
follows by Lebesgue's `Dominated Convergence Theorem' \cite[Theorem~5.36]{Wheeden and Zygmund 2015} 
that, if there exists a function $F$ that is integrable on $[0,1]$ and 
satisfies $F(y)\geq \sup\left\{ \left| f_n (y)\right| : n\in{\mathbb N}\right\}$ 
almost everywhere in $[0,1]$, then one will have 
\begin{equation*} 
\int_0^1 f_0 (y) dy = \int_0^1 \left( \lim_{n\rightarrow\infty} f_n (y)\right) dy 
=\lim_{n\rightarrow\infty} \int_0^1 f_n (y) dy . 
\end{equation*} 
This last outcome would immediately imply, by virtue of \eqref{Beweis-b2}, that 
the equality in \eqref{Beweis-b1} does indeed hold. Therefore, in order to complete 
this proof, we have only to observe now that the function 
$F(y) := \frac{1}{4} \lambda^2 |\phi (y)|$ is integrable over $[0,1]$ 
(the fact that we have $\phi\in L^2\bigl( [0,1]\bigr)$ implies this, since the interval $[0,1]$ is bounded), 
and that, from the definition of $f_n$ and the bound $\left| K_2 (x,y)\right| < \frac{1}{4}$ 
($0\leq x,y\leq 1$), implied by \eqref{DefK} and \eqref{K2symmetrically}, it follows  that the same function $F$ 
satisfies $F(y)\geq \sup\left\{ \left| f_n (y)\right| : n\in{\mathbb N}\right\}$ for $0\leq y\leq 1$.
\end{proof} 
\begin{remarks}{\it 1)} 
Let $j\in{\mathbb N}$. Then, by \eqref{DefEigenfunction} and \eqref{DefK},  
one has $\phi_j (0) = 0$. Thus it follows from Theorem~2.10 that 
one has $\lim_{x\rightarrow 0+} \phi_j (x) = 0$. In the next section we 
discover more about how the eigenfunctions $\phi_1 (x),\phi_2 (x),\phi_3 (x), \ldots\ $ 
behave as $x$ tends towards $0$ from above. 
\item{\it 2)} We need Theorem~2.7, Corollary~2.8 and Theorem~2.10 for the proof of our next result, 
the `bilinear formula' for $K_2$. In \cite[Sections~3.9, 3.10 and~3.12]{Tricomi 1957} and 
\cite[Sections~7.3 and~7.4]{Kanwal 1997} (for example), it is shown that 
the bilinear formula for a kernel $k(x,y)$ is valid if the function $k$ satisfies certain conditions. 
Yet, neither of these two references, nor any other that we know of, quite 
manages to cover the case of our kernel $K_2$: the discontinuity of $K_2 (x,y)$ at the point 
$(x,y)=(0,0)$ prevents this.  
\end{remarks} 
\begin{theorem} Let $0<\varepsilon\leq 1$. 
Then the series 
\begin{equation}\label{BilinearSeries} 
\frac{\phi_1 (x) \phi_1 (y)}{\lambda_1^2} +  
\frac{\phi_2 (x) \phi_2 (y)}{\lambda_2^2} +  
\frac{\phi_3 (x) \phi_3 (y)}{\lambda_3^2} +\ \ldots\ 
\end{equation}  
converges uniformly for $(x,y)\in [0,1]^2 \backslash (0,\varepsilon)^2$. 
For $0\leq x,y\leq 1$, this series is absolutely convergent, and one has:  
\begin{equation}\label{K2BilinearFormula} 
\sum_{j=1}^{\infty} \frac{\phi_j (x) \phi_j(y)}{\lambda_j^2} = 
K_2 (x,y) .   
\end{equation} 
\end{theorem}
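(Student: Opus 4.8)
The plan is to adapt the classical Mercer/Hilbert-Schmidt argument for the bilinear series to a kernel that fails to be continuous at one point. First I would record the basic structural facts: by \eqref{K2Eigenfunction} each $\phi_j$ is an eigenfunction of $K_2$ belonging to the positive eigenvalue $\lambda_j^{2}$, and the remainder kernel $R_n(x,y):=K_2(x,y)-\sum_{j=1}^{n}\lambda_j^{-2}\phi_j(x)\phi_j(y)$ defines a non-negative quadratic form on $L^2\bigl([0,1]\bigr)$: indeed, using \eqref{K2symmetrically}, \eqref{DefEigenfunction}, Fubini's theorem and the boundedness of $K$, one finds for $f\in L^2\bigl([0,1]\bigr)$ that $\int_0^1\int_0^1 R_n(x,y)f(x)f(y)\,dx\,dy=\|Kf\|^{2}-\sum_{j=1}^{n}\langle Kf,\phi_j\rangle^{2}\geq 0$ (Bessel's inequality, applied to $Kf\in L^2\bigl([0,1]\bigr)$). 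By Theorems~2.7 and~2.10 the kernel $R_n$ is continuous at every point $(a,a)$ with $0<a\leq 1$; testing the preceding non-negativity against the indicator function of a short subinterval of $[0,1]$ centred at $a$ then forces $R_n(a,a)\geq 0$, that is $\sum_{j=1}^{n}\lambda_j^{-2}\phi_j(a)^{2}\leq K_2(a,a)$ for $0<a\leq 1$; the case $a=0$ is trivial since $\phi_j(0)=0$. Letting $n\to\infty$, and noting $K_2(x,x)<\tfrac14$, I get $g(x):=\sum_{j=1}^{\infty}\lambda_j^{-2}\phi_j(x)^{2}\leq K_2(x,x)<\tfrac14$ for every $x\in[0,1]$.

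Next I would upgrade this to the equality $g(x)=K_2(x,x)$ on $[0,1]$ and extract continuity of $g$ on $(0,1]$. For fixed $x$ the function $K(x,\cdot)$ lies in $L^2\bigl([0,1]\bigr)$ (by \eqref{HalfHilbertSchmidt}) and is orthogonal to the null-space of the operator $f\mapsto Kf$: if $Kf=0$ then $\langle K(x,\cdot),f\rangle=(Kf)(x)$, and $Kf$ vanishes identically, since it vanishes almost everywhere and is continuous on $(0,1]$ — this last being a consequence of the case $r=0$ of Lemma~2.6 together with the boundedness of $K$ — while $(Kf)(0)=0$. Since the maximal system $\{\phi_1,\phi_2,\ldots\}$ spans the orthogonal complement of that null-space (standard Hilbert-Schmidt theory; see \cite{Tricomi 1957}), it follows that $K(x,\cdot)=\sum_{j}\lambda_j^{-1}\phi_j(x)\phi_j$ in $L^2\bigl([0,1]\bigr)$; hence, by Parseval's identity and \eqref{K2symmetrically}, $g(x)=\|K(x,\cdot)\|^{2}=\int_0^1 K^{2}(x,y)\,dy=K_2(x,x)$ for every $x\in[0,1]$. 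Now fix $\varepsilon\in(0,1]$. By Theorem~2.7 the function $x\mapsto K_2(x,x)=g(x)$ is continuous on the compact interval $[\varepsilon,1]$, the partial sums $g_n(x):=\sum_{j=1}^{n}\lambda_j^{-2}\phi_j(x)^{2}$ are continuous there, and $g_n\uparrow g$ pointwise; so Dini's theorem gives $\tau_m(\varepsilon):=\sup_{x\in[\varepsilon,1]}\sum_{j>m}\lambda_j^{-2}\phi_j(x)^{2}\to 0$ as $m\to\infty$.

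The uniform convergence of \eqref{BilinearSeries} on $D_\varepsilon:=[0,1]^{2}\backslash(0,\varepsilon)^{2}$ now follows from the Cauchy-Schwarz inequality. A point of $D_\varepsilon$ either has a vanishing coordinate — in which case every term of \eqref{BilinearSeries}, and also $K_2(x,y)$, vanishes — or satisfies $\max\{x,y\}\geq\varepsilon$, say $x\geq\varepsilon$; in the latter case $\sum_{j=m+1}^{n}\lambda_j^{-2}|\phi_j(x)\phi_j(y)|\leq\bigl(\sum_{j>m}\lambda_j^{-2}\phi_j(x)^{2}\bigr)^{1/2}\bigl(\sum_{j>m}\lambda_j^{-2}\phi_j(y)^{2}\bigr)^{1/2}\leq\tfrac12\,\tau_m(\varepsilon)^{1/2}$ (using $g(y)<\tfrac14$). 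Thus the partial sums of \eqref{BilinearSeries} are uniformly Cauchy on $D_\varepsilon$ and converge there uniformly to a limit $S(x,y)$, which is continuous on $D_\varepsilon$, being a uniform limit of the continuous partial sums. The same Cauchy-Schwarz bound, now with $m=0$, shows that \eqref{BilinearSeries} converges absolutely at every $(x,y)\in[0,1]^{2}$, with $\sum_j\lambda_j^{-2}|\phi_j(x)\phi_j(y)|\leq g(x)^{1/2}g(y)^{1/2}\leq\tfrac14$. To identify $S$ with $K_2$, note that, for fixed $x\in(0,1]$, \eqref{K2Eigenfunction} shows that the $n$-th partial sum of \eqref{BilinearSeries} in the variable $y$ is the $n$-th Fourier partial sum of $K_2(x,\cdot)$ relative to $\{\phi_k\}$; applying the bounded operator $K$ to the $L^2$-expansion of $K(x,\cdot)$ obtained above shows this Fourier series converges, in $L^2\bigl([0,1]\bigr)$, to $K\bigl(K(x,\cdot)\bigr)=K_2(x,\cdot)$. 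Hence, for each $x\in(0,1]$, $S(x,\cdot)=K_2(x,\cdot)$ almost everywhere, and both are continuous functions of $y$ on $[0,1]$ (by the preceding paragraph and Corollary~2.8), so they agree everywhere; as $S(0,y)=0=K_2(0,y)$, we conclude $S\equiv K_2$ on $[0,1]^{2}$, which gives \eqref{K2BilinearFormula}.

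The step I expect to be the main obstacle is the one in the second paragraph: proving the \emph{equality} $g(x)=K_2(x,x)$ (not merely the bound of the first paragraph) forces one to invoke the completeness half of the Hilbert-Schmidt theory and, with it, to handle the genuine continuity of $Kf$ on $(0,1]$ rather than treating $Kf$ merely as an element of $L^2\bigl([0,1]\bigr)$. One must also keep in mind that $x\mapsto K_2(x,x)$ is \emph{not} continuous at $x=0$ — by \eqref{K2BoundOnDiagonal} it tends to $\tfrac{1}{12}$ as $x\to 0+$, whereas $g(0)=0$ — which is exactly why Dini's theorem is applied only on intervals $[\varepsilon,1]$ with $\varepsilon>0$, and why the assertion of uniform convergence has to exclude the square $(0,\varepsilon)^{2}$. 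A lesser point needing care is that the non-negativity argument of the first paragraph yields $R_n(a,a)\geq 0$ only at points $a\neq 0$, since $R_n$ shares the discontinuity of $K_2$ at the origin.
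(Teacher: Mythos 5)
Your proof is correct, but it is organized differently from the paper's. The paper's argument is more economical: for each fixed $x_1\in[0,1]$ it observes that $K_2(x_1,\cdot)=\int_0^1 K(\cdot,z)K(x_1,z)\,dz$ is of the form $Kg$ with $g=K(x_1,\cdot)\in L^2([0,1])$, and then applies the Hilbert--Schmidt theorem (in the form quoted in Tricomi) to conclude at once that the series \eqref{BilinearSeries}, viewed as a function of $y$ with $x=x_1$ frozen, converges absolutely and uniformly on $[0,1]$ with sum equal to $K_2(x_1,y)$ almost everywhere; a short continuity argument (Corollary~2.8 plus uniform limit of continuous partial sums) then upgrades this to equality everywhere, giving \eqref{K2BilinearFormula} directly and also the absolute convergence. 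Only after that does the paper bring in Dini's theorem (applied, just as you do, to the monotone diagonal sums on $[\varepsilon,1]$) and the Cauchy--Schwarz estimate to obtain the uniform convergence on $[0,1]^2\setminus(0,\varepsilon)^2$. You instead prove the diagonal identity $\sum_j\lambda_j^{-2}\phi_j(x)^2=K_2(x,x)$ first, and you do so in two layers: a Mercer-style positivity argument (which gives only the inequality $\leq$), followed by the completeness of the orthonormal system $\{\phi_j\}$ in $(\ker K)^{\perp}$ and Parseval, after which Dini and Cauchy--Schwarz produce the off-diagonal statement, and finally $L^2$-limits plus continuity identify the sum with $K_2$. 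Both routes are legitimate and both pivot on Dini's theorem and on excluding the corner $(0,\varepsilon)^2$ where $K_2$ is discontinuous. What the paper's route buys is brevity and a single clean appeal to the Hilbert--Schmidt theorem; what yours buys is a more self-contained and explicit exposition of where the positivity and completeness enter. One small economy you could make in your version: your first paragraph (the Mercer positivity argument giving $\sum_{j\leq n}\lambda_j^{-2}\phi_j(a)^2\leq K_2(a,a)$) is rendered redundant by the Parseval computation in your second paragraph, which already yields the equality $g(x)=K_2(x,x)=\int_0^1 K^2(x,y)\,dy<\tfrac14$ and hence all the bounds you need.
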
 
\begin{proof} 
Let $x_1\in [0,1]$. Put $f(y) := K_2 \left( x_1 , y\right)$, 
so that for $0\leq y\leq 1$ one has 
$f(y) = \int_0^1 K(y,z) g(z) dz$, where $g(z):=K(x_1,z)$. 
By Corollary~2.8, the function $f$ is continuous on $[0,1]$. 
Since the kernel $K$ is a measurable function on $[0,1]\times [0,1]$ 
that satisfies both \eqref{HSnorm} and \eqref{HalfHilbertSchmidt}, it follows 
by an application \cite[Page~113]{Tricomi 1957} of the `Hilbert-Schmidt theorem' 
\cite[Page~110]{Tricomi 1957} that the series \eqref{BilinearSeries} 
converges, both absolutely and uniformly, for $(x,y)\in \left\{ x_1\right\}\times [0,1]$, and 
that the corresponding sums, $F(y) := \sum_{j=1}^{\infty} \lambda_j^{-2} \phi_j \left( x_1\right) \phi_j (y)$ 
($0\leq y\leq 1$), satisfy $F(y) = f(y)$ almost everywhere in $[0,1]$, so that 
the set $\left\{ y \in [0,1] : F(y) = f(y)\right\}$ is certainly dense in $[0,1]$. 
\par 
For $x=x_1$, each partial sum of the series \eqref{BilinearSeries} 
is a linear combination of finitely many of the 
eigenfunctions $\phi_1 (y),\phi_2 (y), \ldots\ $, and so, by 
Theorem~2.10, is a function of $y$ that is continuous on $[0,1]$. 
Therefore, given that we know these partial sums to be the terms of a sequence 
converging uniformly to the limit $F(y)$ on $[0,1]$, it follows 
that that limit, $F$, is continuous on $[0,1]$. 
Thus, both $f$ and $F$ are continuous on $[0,1]$, so that the set 
$\left\{ y \in [0,1] : F(y) = f(y)\right\}$, being dense in $[0,1]$, 
must contain the interval $[0,1]$. That is, we have \eqref{K2BilinearFormula} 
for $x=x_1$ and all $y\in [0,1]$. 
\par
Since $x_1$ here denotes an arbitrary fixed point in the interval $[0,1]$, 
it has now been established that, for $0\leq x,y\leq 1$, 
the equality \eqref{K2BilinearFormula} holds and the infinite sum 
occurring in \eqref{K2BilinearFormula} converges absolutely. 
\par
We now have only to prove the part of the theorem 
concerning uniform convergence on $[0,1]^2\backslash (0,\varepsilon)^2$. 
We begin by observing that, since one has $\phi_j (x) \phi_j (y) = \phi_j (y) \phi_j (x)$ 
for $j\in{\mathbb N}$ and $0\leq x,y\leq 1$, it will be enough to establish 
that the series \eqref{BilinearSeries} converges uniformly 
for $x\in \{ 0\}\cup [\varepsilon , 1]$, $y\in [0,1]$. 
We know, from the first paragraph of this proof (for example), that the series 
\eqref{BilinearSeries} does converge uniformly for $x=0$ and $0\leq y\leq 1$. 
All that now remains to be shown is that the series \eqref{BilinearSeries} converges uniformly 
for $(x,y)\in [\varepsilon , 1]\times [0,1]$. 
To this end, we note that, by the Cauchy-Schwarz inequality and  \eqref{K2BilinearFormula}, 
\eqref{K2symmetrically} and \eqref{HalfHilbertSchmidt}, it follows that, when $N\in{\mathbb N}$, one has: 
\begin{align*} 
\left| \sum_{j=N+1}^{\infty} \frac{\phi_j (x) \phi_j(y)}{\lambda_j^2}  \right| &\leq 
\left( \sum_{j=N+1}^{\infty} \frac{\phi_j^2 (x)}{\lambda_j^2} \right)^{\frac{1}{2}} 
\left( \sum_{j=N+1}^{\infty} \frac{\phi_j^2 (y)}{\lambda_j^2} \right)^{\frac{1}{2}} \\ 
 &\leq \left( K_2 (y,y)\right)^{\frac{1}{2}} 
\left( \sum_{j=N+1}^{\infty} \frac{\phi_j^2 (x)}{\lambda_j^2} \right)^{\frac{1}{2}} \leq 
{\textstyle\frac{1}{2}} \left( \sum_{j=N+1}^{\infty} \frac{\phi_j^2 (x)}{\lambda_j^2} \right)^{\frac{1}{2}} ,
\end{align*}  
for $0\leq x,y\leq 1$. 
Therefore, all that we now have to do (in order to complete this proof) is show that the series 
$\lambda_1^{-2} \phi_1^2 (x) + \lambda_2^{-2} \phi_2^2 (x) + \lambda_3^{-2} \phi_3^2 (x) + \ \ldots\ $ 
converges uniformly for $x\in [\varepsilon,1]$.  
\par
Putting $s_n (x) := \sum_{j=1}^N \lambda_j^{-2} \phi_j^2 (x)$ ($N\in{\mathbb N}$, $0\leq x\leq 1$), we
observe that $s_1 (x)\leq s_2 (x) \leq s_3 (x) \leq\ \ldots\ $ ($0\leq x\leq 1$), that 
the functions $s_1,s_2,s_3,\ldots\ $ are continuous on $[0,1]$ (by virtue of Theorem~2.10), 
and that, by \eqref{K2BilinearFormula}, it follows that, for $0\leq x\leq 1$, one has 
$\lim_{N\rightarrow\infty} s_N (x) = K_2 (x,x)$, which, by Theorem~2.7, is a continuous 
function of $x$ on the interval $[\varepsilon , 1]$. 
By Dini's theorem \cite[Theorem~7.13]{Rudin 1976}, it follows from what we have just noted 
that the sequence $s_1 (x), s_2 (x), s_3(x) , \ldots\ $ is uniformly convergent on 
the compact interval $[\varepsilon , 1]\subset [0,1]$: this means, of course, 
that the same is true of the series 
$\lambda_1^{-2} \phi_1^2 (x) + \lambda_2^{-2} \phi_2^2 (x) + \lambda_3^{-2} \phi_3^2 (x) + \ \ldots\ $. 
\end{proof}
\begin{remarks} 
The above proof is, in essence, an adaptation of the proof 
of `Mercer's theorem' that appears in \cite[Section~3.12]{Tricomi 1957}.
\end{remarks} 
\begin{corollary} 
One has 
\begin{equation}\label{K2diagonal} 
\sum_{j=1}^{\infty} 
\frac{\phi_j^2 (x)}{\lambda_j^2} = 
K_2 (x,x) = 
\int_0^1 K^2 (x,z) dz \leq \frac{1}{4} 
\quad\text{($0\leq x\leq 1$),} 
\end{equation} 
and (in consequence of this) one has also 
\begin{equation}\label{PhiBoundedUniformly}
\left| \phi_j (x)\right| \leq 
{\textstyle\frac{1}{2}} \left| \lambda_j\right| 
\quad\text{($0\leq x\leq 1$, $j\in{\mathbb N}$),} 
\end{equation} 
\begin{equation}\label{K(x,y)_in_mean}
\lim_{H\rightarrow\infty} \int_0^1 \left( K(x,y) - 
\sum_{h=1}^H \frac{\phi_h (x) \phi_h(y)}{\lambda_h} \right)^2 dy = 0 
\quad\text{($0\leq x\leq 1$)} 
\end{equation} 
and
\begin{equation}\label{K2Trace}
\sum_{h=1}^{\infty} \frac{1}{\lambda_h^2} = \| K\|_{\rm HS}^2\;. 
\end{equation} 
\end{corollary}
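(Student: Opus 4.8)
The plan is to extract all four assertions from the bilinear formula \eqref{K2BilinearFormula}, which Theorem~2.11 supplies for every point $(x,y)\in[0,1]\times[0,1]$. First I would set $y=x$ in \eqref{K2BilinearFormula}, getting $\sum_{j=1}^{\infty}\phi_j^2(x)/\lambda_j^2=K_2(x,x)$ for all $x\in[0,1]$; by the second form of the definition \eqref{K2symmetrically} one has $K_2(x,x)=\int_0^1 K^2(x,z)\,dz$, and the case $p=2$ of \eqref{HalfHilbertSchmidt} makes this strictly less than $(1/2)^2=1/4$ (in particular at most $1/4$). This is \eqref{K2diagonal}. Since the terms of the series in \eqref{K2diagonal} are nonnegative, each term individually is at most the whole sum $K_2(x,x)\le 1/4$, so $\phi_j^2(x)\le\lambda_j^2/4$ and hence $|\phi_j(x)|\le\frac12|\lambda_j|$, which is \eqref{PhiBoundedUniformly}.

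I would then prove \eqref{K(x,y)_in_mean} by expanding the square directly. Fix $x\in[0,1]$; by the case $p=2$ of \eqref{HalfHilbertSchmidt} the function $y\mapsto K(x,y)$ lies in $L^2([0,1])$, and \eqref{DefEigenfunction} gives $\int_0^1 K(x,y)\phi_h(y)\,dy=\phi_h(x)/\lambda_h$ for each $h\in{\mathbb N}$. Expanding the integrand in \eqref{K(x,y)_in_mean}, evaluating $\int_0^1\bigl(\sum_{h\le H}\lambda_h^{-1}\phi_h(x)\phi_h(y)\bigr)^2 dy=\sum_{h\le H}\phi_h^2(x)/\lambda_h^2$ by orthonormality of $\phi_1,\phi_2,\ldots$, and using the identity just noted for the cross term, one finds that the integral in \eqref{K(x,y)_in_mean} equals $\int_0^1 K^2(x,y)\,dy-\sum_{h\le H}\phi_h^2(x)/\lambda_h^2=K_2(x,x)-\sum_{h\le H}\phi_h^2(x)/\lambda_h^2$. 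By \eqref{K2diagonal} this is a tail of a convergent series of nonnegative numbers, and so tends to $0$ as $H\to\infty$; this gives \eqref{K(x,y)_in_mean}.

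Finally, for the trace identity \eqref{K2Trace} I would integrate \eqref{K2diagonal} over $x\in[0,1]$. Since the summands $\phi_h^2(x)/\lambda_h^2$ are nonnegative and measurable, the monotone convergence theorem applied to the partial sums allows term-by-term integration, so $\sum_{h=1}^{\infty}\lambda_h^{-2}\int_0^1\phi_h^2(x)\,dx=\int_0^1 K_2(x,x)\,dx$; here $\int_0^1\phi_h^2(x)\,dx=\|\phi_h\|^2=1$ for every $h$, so the left side is $\sum_{h=1}^{\infty}\lambda_h^{-2}$, while the right side is $\int_0^1\int_0^1 K^2(x,z)\,dz\,dx=\|K\|_{\rm HS}^2$ by \eqref{K2symmetrically} and \eqref{HSnorm}. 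I do not anticipate a serious obstacle: everything here is routine once Theorem~2.11 is available, the only points calling for a little care being the membership $K(x,\cdot)\in L^2([0,1])$ for each fixed $x$ (provided by \eqref{HalfHilbertSchmidt}) and the legitimacy of interchanging sum and integral in the last step (provided by the nonnegativity of the summands and monotone convergence).
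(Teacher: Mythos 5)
Your proposal is correct and follows essentially the same route as the paper's proof: specialise the bilinear formula of Theorem~2.11 to the diagonal, extract \eqref{PhiBoundedUniformly} from nonnegativity of the terms, obtain \eqref{K(x,y)_in_mean} by expanding the square and using \eqref{DefEigenfunction} together with orthonormality, and get \eqref{K2Trace} by integrating \eqref{K2diagonal} over $[0,1]$ via the monotone convergence theorem. No substantive difference from the paper's argument.
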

\begin{proof} 
The result \eqref{K2diagonal} follows immediately from Theorem~2.11, 
\eqref{K2symmetrically} and \eqref{HalfHilbertSchmidt}, for $p=2$. 
By \eqref{K2diagonal}, we have $\frac{1}{4}\geq \lambda_j^{-2} \phi_j^2 (x)$, 
for $j\in{\mathbb N}$ and $0\leq x\leq 1$.  
From this, we immediately obtain the bounds \eqref{PhiBoundedUniformly}. 
\par 
Suppose now that $x\in [0,1]$. 
By expanding the integrand in \eqref{K(x,y)_in_mean} and integrating term by term, we 
find (using \eqref{DefEigenfunction}, \eqref{K2symmetrically} 
and the orthonormality of $\phi_1,\phi_2,\phi_3,\ldots $ ) 
that the limit occurring in \eqref{K(x,y)_in_mean}  
is $\lim_{H\rightarrow\infty} \bigl( K_2(x,x) -  \sum_{h=1}^H \lambda_h^{-2} \phi_h^2 (x)\bigr)$,  
which (by \eqref{K2diagonal}) is equal to $0$. This proves \eqref{K(x,y)_in_mean}. 
\par 
By the `Monotone Convergence Theorem' \cite[Theorem~5.32]{Wheeden and Zygmund 2015}, 
it follows from the result \eqref{K2diagonal} that one has  
\begin{equation*}
\lim_{H\rightarrow\infty} \int_0^1 \biggl(\sum_{h=1}^H \frac{\phi_h^2 (x)}{\lambda_h^2}\biggr) dx  
= \int_0^1 K_2 (x,x) dx . 
\end{equation*} 
By this, combined with both the 
fact that $\left\| \phi_h \right\| = 1$ ($h\in{\mathbb N}$)
and the definitions in \eqref{K2symmetrically} and \eqref{HSnorm}, 
we obtain what is stated in \eqref{K2Trace}.  
\end{proof} 
\begin{remarks} 
{\it 1)} By \eqref{K2Trace}, \eqref{HSnorm} and \eqref{EigenvalueOrder1}, we have:  
\begin{equation}\label{SpectrumBound} 
\left|\lambda_1\right| = \min\left\{ \left| \lambda_j\right| : j\in{\mathbb N}\right\} 
> \| K\|_{\rm HS}^{-1} > 2\;. 
\end{equation} 
\item{\it 2)} For an alternative proof of \eqref{PhiBoundedUniformly}, simply bound the integral in \eqref{DefEigenfunction} using the Cauchy-Schwarz inequality, \eqref{HalfHilbertSchmidt} 
and the relation $\| \phi_j\| = 1$. 
\item{\it 3)} Since all the summands occurring in the series 
$\sum_{j=1}^{\infty} \lambda_j^{-2} \phi_j^2 (x)$ are non-negative real numbers, 
it can be deduced from Theorem~2.11 that, for any constant $\varepsilon\in (0,1)$, the sequence $\bigl( \lambda_j^{-1} \phi_j (x)\bigr)$ converges uniformly (to the limit $0$) for all $x\in [\varepsilon , 1]$. 
That is, for each $\varepsilon\in (0,1)$, one has 
\begin{equation*} 
\left| \phi_j (x)\right| \leq 
\frac{\left| \lambda_j\right|}{d_j (\varepsilon)} 
\quad\text{($\varepsilon\leq x\leq 1$ and $j\in{\mathbb N}$),} 
\end{equation*}
where $( d_j (\varepsilon) )$ is some 
unbounded monotonic increasing sequence of positive numbers that depends only on $\varepsilon$ 
(by \eqref{PhiBoundedUniformly}, one can assume that $d_1(\varepsilon)\geq 2$). 
\item{\it 4)} The series 
$\lambda_1^{-2} \phi_1^2 (x) + \lambda_2^{-2} \phi_2^2 (x) + \lambda_3^{-2} \phi_3^2 (x) + \ \ldots\ $ 
is not uniformly convergent on $[0,1]$. 
If it were, then the function $K_2$ would be continuous on 
$[0,1]\times [0,1]$ (this would follow by virtue of Theorem~2.10, Theorem~2.11 and 
the inequalities that are obtained in the penultimate paragraph of the proof of Theorem~2.11). 
By Theorem~2.9, however, we know that $K_2$ is certainly not continuous at 
the point $(0,0)\in [0,1]\times [0,1]$.  
\end{remarks} 

\section{Lipschitz conditions} 

\bigskip 

\begin{lemma} 
When $0<x\leq 1$, one has: 
\begin{equation*} 
\int_x^1 
\Biggl| \sum_{m > \frac{1}{y}} 
\frac{\widetilde B_2 \left( \frac{my}{x}\right)}{m^2} 
\Biggr| 
\frac{dy}{y^2} < \frac{2}{3} . 
\end{equation*} 
\end{lemma}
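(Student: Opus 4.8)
The plan is to estimate the inner sum $S(x,y) := \sum_{m > 1/y} \widetilde B_2\bigl( \tfrac{my}{x}\bigr) m^{-2}$ for each fixed $x \in (0,1]$ and $y \in [x,1]$, and then integrate against $y^{-2}\,dy$ over $[x,1]$. The key observation is that, because $\widetilde B_2$ is the periodic Bernoulli function of degree $2$, it has mean value zero over any period and satisfies the uniform bound $|\widetilde B_2(t)| \le \tfrac16$; but a crude application of $|\widetilde B_2| \le \tfrac16$ gives only $|S(x,y)| \le \tfrac16 \sum_{m>1/y} m^{-2} \le \tfrac16(y + y^2) \le \tfrac{y}{3}$, which when fed into $\int_x^1 |S(x,y)|\,y^{-2}\,dy$ yields $\int_x^1 \tfrac13 y^{-1}\,dy = \tfrac13\log(1/x)$ — unbounded as $x\to 0+$. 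So the crude bound is not good enough near the tail, and we need to exploit cancellation among the summands. First I would split the range of $m$: for $m$ with $my/x$ small (equivalently $m$ not much larger than $1/y$, where the summation just begins), the terms are genuinely of size $\asymp m^{-2}$, but there are only $O(1)$ of them near the threshold and the contribution of the whole tail $\sum_{m>1/y} m^{-2}$ is $O(y)$; the saving has to come from elsewhere.

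The real mechanism is Abel summation (partial summation) applied to $S(x,y)$ in the variable $m$. Writing $P(u) := \sum_{1/y < m \le u} \widetilde B_2\bigl(\tfrac{my}{x}\bigr)$, partial summation gives
\begin{equation*}
S(x,y) = \lim_{u\to\infty}\frac{P(u)}{u^2} + 2\int_{1/y}^{\infty} \frac{P(u)}{u^3}\,du = 2\int_{1/y}^{\infty}\frac{P(u)}{u^3}\,du ,
\end{equation*}
since $P(u) = O(u)$ forces $P(u)/u^2 \to 0$. The point is to bound $P(u)$ better than the trivial $O(u)$: the values $\widetilde B_2(my/x)$ as $m$ runs over an interval of length $x/y$ (one full period of $m \mapsto \widetilde B_2(my/x)$) sum to something close to $\tfrac{x}{y}\int_0^1 B_2(t)\,dt = 0$, so $P(u) = O\bigl(\tfrac{x}{y} + \text{(one leftover term)}\bigr) = O(1 + x/y)$, uniformly in $u$. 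Actually the cleanest route is to recognise $P(u)$ itself via the identity $\int_a^b \widetilde B_2(t)\,dt = \tfrac13\widetilde B_3(b) - \tfrac13\widetilde B_3(a)$ used in the proof of Lemma~2.3: converting the sum $P(u)$ over $m$ into something involving $\widetilde B_3$ and using $\max_t|B_3(t)| = \tfrac{1}{12\sqrt3}$ (which the paper has already invoked in the proof of Lemma~2.4) should give $|P(u)| \le C(x/y + 1)$ with an explicit small constant $C$, and then
\begin{equation*}
|S(x,y)| \le 2\int_{1/y}^{\infty}\frac{C(1 + x/y)}{u^3}\,du = C\bigl(1 + \tfrac{x}{y}\bigr) y^2 = C\bigl(y^2 + xy\bigr) .
\end{equation*}
Feeding this into the target integral gives $\int_x^1 |S(x,y)|\,y^{-2}\,dy \le C\int_x^1 (1 + x/y)\,dy \le C\bigl((1-x) + x\log(1/x)\bigr) \le C\bigl(1 + \sup_{0<x\le 1} x\log(1/x)\bigr) = C(1 + e^{-1})$, which is a bounded quantity. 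The remaining task is purely arithmetic: track the constants carefully enough to conclude the bound is strictly less than $\tfrac23$.

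The main obstacle I anticipate is the bookkeeping needed to make the constant come out below $\tfrac23$ rather than merely $O(1)$. This requires being somewhat careful about three things: (i) the boundary/leftover term in $P(u)$ coming from the fact that $u - 1/y$ need not be an integer multiple of the period $x/y$, so the partial sum $P(u)$ is not exactly a telescoping of $\widetilde B_3$ values; (ii) handling the regime $y \le x$ — but that regime is absent here since the outer integral runs over $y \in [x,1]$, so actually $x/y \le 1$ throughout, which simplifies matters considerably and means the $xy$-type term is dominated by the $y^2$-type term after integration; (iii) the sup of $x\log(1/x)$ on $(0,1]$ equals $1/e \approx 0.368$, so even a constant $C$ as large as, say, $1/3$ would give roughly $\tfrac13(1 + \tfrac1e) \approx 0.46 < \tfrac23$, leaving comfortable room. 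So I expect the inequality to hold with margin to spare, and the only genuine work is to verify that the partial-summation estimate for $|S(x,y)|$ holds with a small enough explicit constant — most plausibly by directly bounding $|S(x,y)| \le C_1(y^2 + xy)$ with $C_1 \le \tfrac13$ or so, perhaps even reusing the computation already done in the proof of Lemma~2.4 where the analogous tail $\int_{1/x}^\infty \widetilde B_2(t)\,t^{-2}\,dt$ was shown to be $\le \tfrac{1}{18\sqrt3}x^2$.
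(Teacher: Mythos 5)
Your key step is wrong: the claim that $P(u) := \sum_{1/y < m \le u}\widetilde B_2\left(\frac{my}{x}\right) = O(1+x/y)$ uniformly in $u$, and hence that $|S(x,y)| \ll y^2 + xy$, is false. Consider $y$ near $x$: taking $y=x$ gives $\widetilde B_2\left(\frac{my}{x}\right) = \widetilde B_2(m) = B_2(0) = \frac16$ for every integer $m$, so $P(u) = \frac16\left(\lfloor u\rfloor - \lfloor 1/x\rfloor\right)$ grows linearly in $u$, and $S(x,x) = \frac16\sum_{m>1/x}m^{-2} \sim x/6 = y/6$, which is of order $y$, not $y^2$. The same breakdown occurs whenever $y/x$ lies near a rational with small denominator, because then the fractional parts $\left\{ my/x\right\}$ are trapped near a handful of values and the summands over $m$ exhibit no cancellation. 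The root difficulty is that for $y\in[x,1]$ the period of $t\mapsto\widetilde B_2(ty/x)$ is $x/y\le1$, so consecutive integers $m$ jump across whole periods rather than sampling within one; the Euler--Maclaurin comparison $\sum_m\widetilde B_2(\alpha m) \approx \alpha^{-1}\int\widetilde B_2(\alpha t)\,dt$ that you propose (via $\widetilde B_3$) is only effective when $\alpha = y/x$ is small, precisely the regime that you correctly note is absent here. Your obstacle (ii) thus has the situation backwards: $x/y\le1$ is not a simplification but the heart of the problem. Since $\sup_{y\in[x,1]}|S(x,y)|/y$ does not tend to $0$ as $x\to0+$, no pointwise bound on $S(x,y)$ of the kind you seek can rescue the $\frac13\log(1/x)$ divergence you correctly identify at the start.

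The paper's proof uses a genuinely different mechanism. It first reduces, via $Y=1/y$ and $X=1/x$, to a bound on $\int_1^X \bigl|\sum_{m>Y} m^{-2}\widetilde B_2(mX/Y)\bigr|\,dY$, then expands $\widetilde B_2$ in its Fourier series $\widetilde B_2(t) = \sum_{h\ge1}\pi^{-2}h^{-2}\cos(2\pi h t)$, interchanges summation, and invokes the classical exponential-sum estimate $\bigl|\sum_{m>Y}m^{-2}\cos(2\pi m t)\bigr| \le 2/\bigl(Y\max\{1,2Y\|t\|\}\bigr)$. This pointwise bound does blow up like $\|t\|^{-1}$ when $t=hX/Y$ is near an integer, mirroring exactly the obstruction that sinks your approach; but those singularities are only logarithmic when integrated over $Y$, and after the substitution $t=hX/Y$ and a sum over integers $k<hX$ the paper shows the resulting integral $J(h,X)$ is strictly less than $4$, uniformly in $h$ and $X>1$. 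Multiplying by the Fourier weights $\pi^{-2}h^{-2}$ and using $\sum_{h\ge1}h^{-2}=\pi^2/6$ gives exactly $4\cdot\frac16=\frac23$. In short, the cancellation that proves the lemma is an averaging-in-$y$ effect, visible only after the Fourier decomposition and the $Y$-integration, and cannot be captured by direct partial summation in $m$.
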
 
\begin{proof} 
Let $0<x<1$, and define $X := x^{-1}$, so that $X>1$. 
Then, by considering the effect of the substitution $y=Y^{-1}$, 
we find that the lemma will follow if it can be shown that one has
\begin{equation}\label{proofA1}
b(X) := 
\int_1^X 
\Biggl| \sum_{m > Y} 
\frac{\widetilde B_2 \left( \frac{mX}{Y}\right)}{m^2} 
\Biggr| dY < \frac{2}{3} . 
\end{equation}
\par 
Supposing now that $n$ is a positive integer satisfying $n<X$, we 
put $\nu_n := \min\{ n+1 , X\}$. By applying the Levi theorem for series 
\cite[Theorem~10.26]{Apostol 1974}, one can establish that the function 
$Y\mapsto \sum_{m > Y} m^{-2} \widetilde B_2 \left( m X Y^{-1}\right)$ is Lebesgue integrable 
on the interval $[n,\nu_n)$. We therefore may define 
\begin{equation}\label{proofA2} 
b_n(X) := 
\int_n^{\nu_n}  
\Biggl| \sum_{m > Y} 
\frac{\widetilde B_2 \left( \frac{mX}{Y}\right)}{m^2} 
\Biggr| dY\;. 
\end{equation} 
\par 
Now, for $Y\in [n,\nu_n)$, it follows by Definitions~2.2 and \cite[Equations~24.8.1]{Olver et al. 2010} 
that one has 
\begin{equation}\label{proofA3} 
\Biggl| \sum_{m > Y} 
\frac{\widetilde B_2 \left( \frac{mX}{Y}\right)}{m^2} 
\Biggr| = 
\Biggl| \sum_{m > Y}   
\sum_{h=1}^{\infty} \frac{\cos\left( 2\pi h m X Y^{-1}\right)}{\pi^2 h^2 m^2} \Biggr|  
\leq \sum_{h=1}^{\infty} g_h (Y) ,
\end{equation} 
where 
\begin{equation*} 
g_h (Y) := \Biggl| 
 \sum_{m > Y} \frac{\cos\left( 2\pi h m X Y^{-1}\right)}{ \pi^2 h^2 m^2} \Biggr| = 
\Biggl| \sum_{m=n+1}^{\infty}\frac{\cos\left( 2\pi h m X Y^{-1}\right)}{\pi^2 h^2 m^2} \Biggr|  
\end{equation*} 
(the inequality in \eqref{proofA3} being justified by the fact that the double series 
occurring there is absolutely convergent --- so that one may, in particular, 
change the original order of summation by summing firstly over $m$).
By \cite[Theorems~10.26 and~10.16]{Apostol 1974}, each member of the sequence $g_1(Y),g_2(Y),g_3(Y),\ldots $ 
is a function that is Lebesgue integrable on the interval $[n,\nu_n)$. 
We have, moreover, 
\begin{equation*} 
g_h(Y)\geq 0\quad\text{($h\in{\mathbb N}$ and $n\leq Y<\nu_n$)} 
\end{equation*} 
and 
\begin{equation*} 
\sum_{h=1}^{\infty} g_h(Y) \leq 
\sum_{h=1}^{\infty} \sum_{m=n+1}^{\infty} \frac{1}{\pi^2 h^2 m^2} 
<\left( \sum_{k=1}^{\infty} \frac{1}{\pi k^2}\right)^2 < \infty ,  
\end{equation*} 
and so (bearing in mind also that $[n,\nu_n)$ is a bounded interval) we are 
able to conclude that it follows by Lebesgue's dominated convergence theorem 
\cite[Theorem~10.28]{Apostol 1974} that the function 
$Y\mapsto \sum_{h=1}^{\infty} g_h (Y)$ is Lebesgue integrable on $[n,\nu_n)$, 
and that one has 
\begin{equation*} 
\int_n^{\nu_n} \left( \sum_{h=1}^{\infty} g_h (Y)\right) dY = 
\sum_{h=1}^{\infty} \int_n^{\nu_n} g_h (Y) dY .
\end{equation*} 
By this, combined with \eqref{proofA2} and \eqref{proofA3}, it follows that one has 
\begin{equation*} 
b_n(X) \leq \sum_{h=1}^{\infty} \int_n^{\nu_n} \Biggl| 
 \sum_{m > Y} \frac{\cos\left( 2\pi h m X Y^{-1}\right)}{ \pi^2 h^2 m^2} \Biggr| dY . 
\end{equation*} 
By summing each side of this last inequality over the finitely many 
choices of the integer $n$ that satisfy the condition $n<X$ we find 
(upon recalling the definitions \eqref{proofA1} and \eqref{proofA2}) that 
\begin{align}\label{proofA4} 
b(X) = \sum_{1\leq n<X} b_n(X) &\leq 
\sum_{h=1}^{\infty} \sum_{1\leq n<X}\int_n^{\nu_n} \Biggl| 
 \sum_{m > Y} \frac{\cos\left( 2\pi h m X Y^{-1}\right)}{ \pi^2 h^2 m^2} \Biggr| dY \nonumber\\ 
 &=\sum_{h=1}^{\infty} \int_1^X \Biggl| 
 \sum_{m > Y} \frac{\cos\left( 2\pi h m X Y^{-1}\right)}{\pi^2 h^2 m^2} \Biggr| dY \nonumber\\ 
 &= 
\sum_{h=1}^{\infty} \frac{J(h,X)}{\pi^2 h^2} , 
\end{align}
where, for $h\in{\mathbb N}$, we have: 
\begin{equation*}
J(h,X) := \int_1^X \Biggl| 
 \sum_{m > Y} \frac{\cos\left( 2\pi m h X Y^{-1}\right)}{m^2} \Biggr| dY . 
\end{equation*} 
\par 
Our next objective is an upper bound for the integrand just seen in our 
definition of $J(h,X)$. Our proof of this bound utilises a method 
well-known to analytic number theorists. 
Let $Y>1$ and $t\in{\mathbb R}\backslash{\mathbb Z}$. 
We have 
\begin{equation}\label{proofA5} 
\sum_{m>Y} \frac{\cos(2 \pi m t)}{m^2} 
=\int_Y^{\infty} u^{-2} dC(u) ,
\end{equation} 
where, for $u\geq 1$, one has 
\begin{equation*} 
C(u) = \sum_{0<m\leq u} \cos(2 \pi m t) 
= {\rm Re}\Biggl( \sum_{m=1}^{\lfloor u\rfloor} e^{2 \pi i m t}\Biggr) 
= {\rm Re}\left( \frac{e^{2 \pi i \lfloor u\rfloor t} - 1}{1 - e^{- 2 \pi i t}}\right) , 
\end{equation*} 
and so 
\begin{equation*} 
\left| C(u)\right| \leq \frac{2}{\left| e^{\pi i t} - e^{-\pi i t}\right|} 
=\frac{1}{\left| \sin(\pi t)\right|} = \frac{1}{\sin\left( \pi \| t\|\right)} 
\leq \frac{1}{2 \| t\|} , 
\end{equation*}
where $\| t\| =\min\{ |t-j| : j\in{\mathbb Z}\}$. 
Using integration by parts, we deduce from \eqref{proofA5} and the above upper 
bound for $|C(u)|$ that one has 
\begin{align*}
\left| \sum_{m>Y} \frac{\cos(2 \pi m t)}{m^2} \right| 
 &=\left| 2\int_Y^{\infty} u^{-3} C(u) du - Y^{-2} C(Y)\right| \\ 
 &\leq \frac{1}{\| t\|} \left( \int_Y^{\infty} u^{-3} du + {\textstyle\frac{1}{2}} Y^{-2}\right) 
= \frac{1}{\| t\| Y^2} .
\end{align*} 
It is also (trivially) the case that   
\begin{equation*} 
\left| \sum_{m>Y} \frac{\cos(2 \pi m t)}{m^2} \right|  
\leq \sum_{m>Y} \frac{1}{m^2} < \frac{1}{Y^2} + \int_Y^{\infty}  \frac{du}{u^2} < \frac{2}{Y} .
\end{equation*} 
The latter bound remains valid for integer values of $t$, 
and so (by combining the two bounds just noted) we find that 
one has: 
\begin{equation*} 
\left| \sum_{m>Y} \frac{\cos(2 \pi m t)}{m^2} \right| 
\leq \frac{2}{Y \max\left\{ 1 , 2 Y \| t\|\right\}} 
\quad\text{($Y>1$, $t\in{\mathbb R}$).} 
\end{equation*}
\par 
Given our definition of $J(h,X)$, it follows by the upper 
bounds just obtained that, for $h\in{\mathbb N}$, one has: 
\begin{align}\label{proofA6}  
J(h,X) &\leq 
2 \int_1^X  \left( \max\left\{ 1 , 2 Y \left\| \frac{h X}{Y}\right\|\right\}\right)^{-1} \frac{dY}{Y}\nonumber\\ 
 &= - 2 \int_{hX}^h  \left( \max\left\{ 1 , 2 h X t^{-1} \| t \|\right\}\right)^{-1} \frac{dt}{t}\nonumber\\ 
 &= \frac{1}{hX} \int_h^{hX} \frac{dt}{\max\left\{ \frac{t}{2 h X} , \| t \|\right\}} .
\end{align} 
For each positive integer $k < hX$, we have: 
\begin{align*} 
\int_k^{k+1} \frac{dt}{\max\left\{ \frac{t}{2 h X} , \| t \|\right\}}  
 &\leq \int_{-\frac{1}{2}}^{\frac{1}{2}} \frac{dt}{\max\left\{ \frac{k}{2 h X} , \| t \|\right\}}  \\ 
 &= 2\int_0^{\frac{k}{2hX}} \left( \frac{2hX}{k}\right) dt 
 + 2\int_{\frac{k}{2hX}}^{\frac{1}{2}} \frac{dt}{t} \\ 
 &= 2 + 2\log\left( \frac{hX}{k}\right) .
\end{align*} 
It follows by this and \eqref{proofA6} that, for each $h\in{\mathbb N}$, one has   
\begin{align*} 
J(h,X) &\leq \frac{2}{hX} \sum_{1\leq k < hX} \left( 1 + \log\left( \frac{hX}{k}\right)\right) \\ 
 &< \frac{2}{hX} \left( hX + \log\left( \frac{hX}{1} \right) + \int_1^{hX} \log\left( \frac{hX}{\kappa}\right) d\kappa \right) \\ 
 &=\frac{2}{hX}\left( hX + hX\log(hX) + (-1) - \left( hX\log(hX) - hX\right)\right) \\
 &=\frac{2}{hX}\left( 2hX - 1\right) ,
\end{align*} 
so that $J(h,X) < 4$. By this, \eqref{proofA4} and  
Euler's famous evaluation of the sum $\sum_{h=1}^{\infty} h^{-2}$, we obtain the inequality in \eqref{proofA1}.    
This completes our proof in respect of cases where $0<x<1$. The remaining case 
($x=1$) is trivial.  
\end{proof} 
\begin{theorem} 
For all $j\in{\mathbb N}$, one has  
\begin{equation*} 
\frac{\left| \phi_j (x)\right|}{x} \leq C_0 \left| \lambda_j \right|^3 
\quad\text{($0 < x\leq 1$),} 
\end{equation*} 
where 
\begin{equation}\label{DefC0} 
C_0 := \frac{1}{3} + \frac{1}{72 \sqrt{3} e} . 
\end{equation}
\end{theorem}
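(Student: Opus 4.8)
The plan is to start from the eigenfunction equation in its $K_2$ form, namely \eqref{K2Eigenfunction}, and extract the factor $x$ from $K_2(x,y)$ using the explicit formula in Lemma~2.3, or more precisely the consequences of it recorded in Lemma~2.4 and Lemma~3.1. Writing $\phi:=\phi_j$ and $\lambda:=\lambda_j$, we have
\begin{equation*}
\frac{\phi(x)}{x} = \lambda^2 \int_0^1 \frac{K_2(x,y)}{x}\,\phi(y)\,dy
\qquad (0<x\leq 1),
\end{equation*}
so it suffices to bound $\int_0^1 \bigl| K_2(x,y)/x\bigr| \cdot |\phi(y)|\,dy$ uniformly in $x\in(0,1]$ by a constant times $|\lambda|$. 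I would split the $y$-integral at $y=x$ (or perhaps handle it in one piece after inserting the uniform bound \eqref{PhiBoundedUniformly}, $|\phi(y)|\leq \tfrac12|\lambda|$), and then integrate the bound \eqref{K2OverxBound} for $|K_2(x,y)|/x$ against $dy$ on $[0,1]$.

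The next step is therefore to estimate $\int_0^1 \bigl(\tfrac1{12} + \tfrac{x}{36\sqrt3\,y} + \tfrac1{2y^2}\bigl|\sum_{m>1/y} \widetilde B_2(my/x)/m^2\bigr|\bigr)\,dy$. The first term contributes $\tfrac1{12}$. The troublesome middle term $\tfrac{x}{36\sqrt3}\int_0^1 dy/y$ diverges, so I expect the integral must be split at $y=x$: for $y\in(0,x]$ one should instead use a cruder bound on $|K_2(x,y)|/x$ — note that \eqref{K2BoundOffDiagonal} gives $|K_2(x,y)|\leq(\tfrac14+\tfrac1{36\sqrt3})\,y/x$ when $y\leq x$, hence $|K_2(x,y)|/x \leq (\tfrac14+\tfrac1{36\sqrt3})\,y/x^2$, and $\int_0^x$ of this against $|\phi(y)|$, using $|\phi(y)|\leq\tfrac12|\lambda|$, is $O(|\lambda|)$ uniformly in $x$ — while for $y\in[x,1]$ one uses \eqref{K2OverxBound}: the middle term then gives $\tfrac{x}{36\sqrt3}\int_x^1 dy/y = \tfrac{x}{36\sqrt3}\log(1/x)\leq \tfrac1{36\sqrt3 e}$ (since $t\log(1/t)\leq 1/e$), and the tail term is exactly what Lemma~3.1 controls: $\int_x^1 \bigl|\sum_{m>1/y}\widetilde B_2(my/x)/m^2\bigr|\,dy/y^2 < \tfrac23$, so $\tfrac12$ times this is $<\tfrac13$. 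Adding the pieces, with the $\tfrac1{12}$ and the $\tfrac1{36\sqrt3 e}$ coming from $\int_x^1$ and the $\int_0^x$ contribution absorbed, one obtains a bound of shape (constant)$\cdot|\lambda|$, and tracking the constants carefully should produce exactly $C_0|\lambda|^3$ with $C_0 = \tfrac13 + \tfrac1{72\sqrt3\,e}$ after multiplying through by $\lambda^2$ and $\tfrac12|\lambda|$ — the factor $\tfrac12$ from \eqref{PhiBoundedUniformly} halving $\tfrac23\mapsto\tfrac13$ and $\tfrac1{36\sqrt3 e}\mapsto\tfrac1{72\sqrt3 e}$, while the $\tfrac1{12}$ term must be shown to be dominated or absorbed so that it does not appear in $C_0$.

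The main obstacle, and the point requiring the most care, is the bookkeeping near $y=x$: one must verify that the $\int_0^x$ contribution together with the $\tfrac1{12}$-term from $\int_x^1$ really do fit inside the claimed constant $C_0$ rather than enlarging it, since naively $\tfrac12|\lambda|\cdot|\lambda|^2\cdot\bigl(\tfrac1{12}\cdot 1 + \ldots\bigr)$ would give a $\tfrac1{24}|\lambda|^3$ term not present in $C_0=\tfrac13+\tfrac1{72\sqrt3 e}$. So I anticipate the actual argument is slightly more delicate than the crude split above — perhaps one applies Cauchy--Schwarz in $y$ rather than the pointwise bound \eqref{PhiBoundedUniformly}, using $\|\phi\|=1$ and $\int_0^1 (K_2(x,y)/x)^2\,dy$, or perhaps one uses the sharper intermediate inequality displayed in the proof of \eqref{K2BoundOnDiagonal} (which bounds $|K_2(x,y)/x - \tfrac1{2y^2}\sum\cdots|$ by $\tfrac1{12}+\tfrac{x}{36\sqrt3 y}$) and exploits a cancellation or a better estimate for the main term $\tfrac1{2y^2}\sum_{m>1/y}\widetilde B_2(my/x)/m^2$ on $(0,x]$. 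Once the correct weighting of $|\phi(y)|$ is chosen so that only the Lemma~3.1 bound ($\to\tfrac13$ after the $\tfrac12$) and the $\log$-term ($\to\tfrac1{72\sqrt3 e}$) survive, the rest is routine arithmetic and the $|\lambda|^3$ comes from $\lambda^2\cdot|\lambda|$.
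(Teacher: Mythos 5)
Your approach is the same as the paper's, and you have all the right ingredients: apply \eqref{K2Eigenfunction} and \eqref{PhiBoundedUniformly} to reduce to bounding $\int_0^1 |K_2(x,z)|/x\,dz$, split the integral at $z=x$, use a crude bound on $(0,x]$ and \eqref{K2OverxBound} on $[x,1]$, invoke Lemma~3.1 for the tail term, and use $x\log(1/x)\leq 1/e$ for the logarithmic term. But you then talk yourself out of the argument with an imagined obstacle that does not exist. You worry that the $\tfrac1{12}$ contribution (and the $\int_0^x$ piece) would produce a $\tfrac1{24}|\lambda|^3$ term ``not present in $C_0$,'' on the assumption that the $\tfrac13$ in $C_0=\tfrac13+\tfrac1{72\sqrt3 e}$ must come solely from $\tfrac12\cdot\tfrac23$ (the Lemma~3.1 term). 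That is not how the constant is assembled. The paper uses the simpler bound $|K_2(x,z)|<\tfrac14$ on $(0,x]$, giving $\tfrac14$ from that piece, then $\tfrac1{12}$ and $\tfrac13$ from $[x,1]$, and
\begin{equation*}
\tfrac14+\tfrac1{12}+\tfrac13=\tfrac{3+1+4}{12}=\tfrac23,
\end{equation*}
so after the factor $\tfrac12|\lambda|^3$ this contributes exactly $\tfrac13|\lambda|^3$, and the log term contributes $\tfrac1{72\sqrt3 e}|\lambda|^3$. The $\tfrac1{12}$ is not ``absorbed or dominated''; it is a genuine summand that, together with the other two, happens to total $\tfrac23$. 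Your speculative detour into Cauchy--Schwarz or cancellation is unnecessary. Incidentally, your proposed use of \eqref{K2BoundOffDiagonal} on $(0,x]$ gives the smaller contribution $\tfrac12\bigl(\tfrac14+\tfrac1{36\sqrt3}\bigr)<\tfrac14$, which is fine (it yields a slightly better constant than $C_0$), but then you cannot expect the pieces to sum to $C_0$ exactly; the theorem only claims an upper bound, so either choice works --- you just need to finish the arithmetic rather than abandon it.
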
 
\begin{proof} 
Let $j\in{\mathbb N}$ and $0<x\leq 1$. 
By \eqref{K2Eigenfunction} and \eqref{PhiBoundedUniformly}, we obtain the bound   
\begin{equation}\label{proofB1} 
\frac{\left| \phi_j (x)\right|}{x} \leq  
{\textstyle\frac{1}{2}} \left| \lambda_j\right|^3 
\int_0^1 \frac{\left| K_2 (x,z)\right|}{x} dz .
\end{equation} 
By \eqref{K2OverxBound}, the trivial bound $|K_2 (x,z)| < \frac{1}{4} $ ($0\leq z\leq 1$) and 
Lemma~3.1, we find that 
\begin{align*} 
\int_0^1 \frac{\left| K_2 (x,z)\right|}{x} dz  
 &< \frac{1}{4x} \int_0^x dz  \\ 
 &\phantom{{<}} + \int_x^1 \Biggl( 
 \frac{1}{12} + 
\frac{x}{\left( 36 \sqrt{3}\right) z} + 
\frac{1}{2 z^2} \Biggl| \sum_{m > \frac{1}{z}} 
\frac{\widetilde B_2 \left( \frac{mz}{x}\right)}{m^2} 
\Biggr| 
\Biggr) dz \\ 
 &< \frac{1}{4} + \frac{1}{12} + \frac{x\log\left( x^{-1}\right)}{36\sqrt{3}} + \frac{1}{3} .
\end{align*} 
Here $x\log\left( x^{-1}\right)\leq e^{-1}$ (given that $0<x\leq 1$), and 
so the theorem follows directly from the last bound above and \eqref{proofB1}. 
\end{proof} 

\medskip 

\begin{remarks} 
Let $j\in{\mathbb N}$. In view of our having $\phi_j (0)=0\,$ 
(by \eqref{DefEigenfunction} and \eqref{DefK}), Theorem~3.2 
shows that the eigenfunction $\phi_j(x)$ satisfies a 
right-handed Lipschitz condition of order $1$ at the point $x=0$. 
That is, one has  
\begin{equation*} 
\left| \phi_j (x) - \phi_j (0)\right| \leq M^*_j x\qquad\text{($0\leq x\leq 1$)} , 
\end{equation*} 
with $M^*_j := C_0 |\lambda_j|^3$ independent of $x$. 
\end{remarks} 

\medskip 

\begin{lemma} 
When $0<a,b\leq 1$, one has: 
\begin{equation}\label{Delta1Bound}
0\leq \Delta_1(a,b)  := 
\int_0^1 \left| K(a,z) - K(b,z)\right| z dz \leq 
4\left| \frac{1}{b} - \frac{1}{a}\right|\;.  
\end{equation} 
\end{lemma}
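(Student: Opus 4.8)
The plan is to convert this two-variable estimate into a single integral by a substitution, and then to split the range of integration according to whether a certain short interval contains an integer. Both $\Delta_1(a,b)$ and $|1/b-1/a|$ are symmetric in $a$ and $b$, so we may assume $0<a\le b\le 1$; the left-hand inequality $\Delta_1(a,b)\ge 0$ in \eqref{Delta1Bound} is trivial. Set $\rho:=b/a\ge 1$ and $c:=1/b\ge 1$. Since $K(x,z)=\tfrac12-\{(xz)^{-1}\}$ for $z>0$, one has $|K(a,z)-K(b,z)|=|\{1/(bz)\}-\{1/(az)\}|$, and the change of variable $t=1/(bz)$ (so that $z=1/(bt)$, $1/(az)=\rho t$ and $z\,dz=-b^{-2}t^{-3}\,dt$) turns the definition of $\Delta_1(a,b)$ into
\begin{equation*}
\Delta_1(a,b)=\frac{1}{b^2}\int_c^{\infty}\bigl|\{t\}-\{\rho t\}\bigr|\frac{dt}{t^3}\,.
\end{equation*}
As $|1/b-1/a|=(b-a)/(ab)=(\rho-1)/b$, the bound to be proved is equivalent to $\int_c^{\infty}|\{t\}-\{\rho t\}|\,t^{-3}\,dt\le 4(\rho-1)/c$.

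When $\rho-1\ge 1/(8c)$ this is immediate from the trivial estimate $|\{t\}-\{\rho t\}|\le 1$, since then $\int_c^{\infty}t^{-3}\,dt=1/(2c^2)\le 4(\rho-1)/c$. So the substantive case is $\rho-1<1/(8c)$ (hence also $\rho<9/8$). Here I would split $[c,\infty)$ into the set $G$ of points $t$ for which $(t,\rho t]$ contains no integer, and its complement $E=\bigcup_{n>c}I_n$, where $I_n:=[\max(c,n/\rho),n)$ and $n$ runs through positive integers. On $G$ one has $\lfloor t\rfloor=\lfloor\rho t\rfloor$, so $\{t\}-\{\rho t\}=(1-\rho)t$ and
\begin{equation*}
\int_G\bigl|\{t\}-\{\rho t\}\bigr|\frac{dt}{t^3}=(\rho-1)\int_G\frac{dt}{t^2}\le\frac{\rho-1}{c}\,.
\end{equation*}

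On $E$ I would use only $|\{t\}-\{\rho t\}|\le 1$ together with countable sub-additivity, bounding the contribution by $\sum_n\int_{I_n}t^{-3}\,dt=\tfrac12\sum_n\bigl(\max(c,n/\rho)^{-2}-n^{-2}\bigr)$. For $n\ge c\rho$ the summand equals $(\rho^2-1)/(2n^2)$, and $\sum_{n\ge\lceil c\rho\rceil}n^{-2}\le 2/(c\rho)$ (using $\lceil c\rho\rceil\ge 2$) gives at most $(\rho^2-1)/(c\rho)\le 2(\rho-1)/c$. For $c<n<c\rho$ the interval $(c,c\rho)$ has length $c(\rho-1)<\tfrac18$, so it contains at most one such $n$; that integer satisfies $n>c\ge 1$, hence $n\ge 2$, and its summand $\tfrac12(c^{-2}-n^{-2})<(\rho-1)/(cn)\le(\rho-1)/(2c)$. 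Combining the three pieces gives $\int_c^{\infty}|\{t\}-\{\rho t\}|\,t^{-3}\,dt\le\tfrac72\,(\rho-1)/c<4(\rho-1)/c$, which is \eqref{Delta1Bound}.

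Most of this is routine — the substitution, and the elementary tail bound for $\sum n^{-2}$ — so the one delicate point is keeping the constant at $4$. This rests on two observations: the weight $t^{-3}$ (which appears only after the substitution) makes the exceptional integral $\int_E t^{-3}\,dt$ both convergent and small, and in the regime $\rho-1<1/(8c)$ the short interval $(c,c\rho)$ contains no more than one integer, which prevents the near-$1$ values of $|\{t\}-\{\rho t\}|$ on $E$ from accumulating. A less careful handling of $E$ would still yield \eqref{Delta1Bound} with $4$ replaced by some larger explicit constant, so the main obstacle is purely the bookkeeping needed to reach the stated value.
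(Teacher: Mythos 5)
Your proof is correct, and it reaches the bound with room to spare (the final estimate is $\tfrac72\,|1/b-1/a|$). It is closely related to the paper's argument, but the packaging differs in a way worth noting. The paper works directly on $z\in[0,1]$ and applies the triangle inequality to $\{1/(bz)\}-\{1/(az)\}=(1/(bz)-1/(az))+(\lfloor 1/(az)\rfloor-\lfloor 1/(bz)\rfloor)$, so that the whole integral splits at the level of the integrand into a smooth piece (giving exactly $\delta:=|1/b-1/a|$) and an integer-counting piece, which is then handled by swapping sum and integral and using $\#\{n:1/a<n\le 1/b\}\le 1$ together with the tail bound for $\sum_{n>1/b}n^{-2}$. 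You instead make the change of variable $t=1/(bz)$, which moves everything to $[c,\infty)$ with weight $t^{-3}$, and split the \emph{domain} into the set $G$ where the floors coincide (where the triangle inequality would be an equality, giving $(\rho-1)\int_G t^{-2}\,dt$) and its complement $E$, on which you simply bound the integrand by $1$. Your trivial-case threshold $\rho-1\ge 1/(8c)$ adapts to $c$, whereas the paper's is the fixed cutoff $\delta\ge 1/2$; the two conditions are not equivalent, but each disposes of its own ``large difference'' regime by the same crude estimate. The remaining three pieces of your bound, $(\rho-1)/c$, $2(\rho-1)/c$, and $(\rho-1)/(2c)$, line up one-for-one with the paper's $\delta$, $2\delta$, and $b\delta$. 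So the underlying mechanism is the same (a one-integer-in-a-short-interval observation plus a $\sum n^{-2}$ tail estimate), but you get there by a domain decomposition after a substitution rather than by an integrand decomposition in place; the substitution makes the role of the weight $t^{-3}$ and the measure of the exceptional set $E$ visibly explicit, which is a modest clarity gain at the cost of a slightly more delicate case split.
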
 
\begin{proof} 
Since the upper bound in 
\eqref{Delta1Bound} is invariant under the permutation $(a,b)\mapsto (b,a)$, 
we may suppose that $0<b\leq a\leq 1$. 
Given \eqref{DefK} and Definitions~2.5, it is trivially the case that we have 
$0\leq \Delta_1 (a,b) \leq \int_0^1 z dz = \frac{1}{2}$. 
Thus the bound 
\eqref{Delta1Bound} certainly holds if $\frac{1}{b} - \frac{1}{a}\geq \frac{1}{2}$. 
We may therefore assume henceforth that 
\begin{equation}\label{deltaSmall} 
0 \leq \delta := \frac{1}{b} - \frac{1}{a} < \frac{1}{2} . 
\end{equation} 
\par 
By Definitions~2.5, \eqref{DefK} and \eqref{deltaSmall}, 
we find (using the triangle inequality) that 
\begin{align*} 
\Delta_1 (a,b) &= \int_0^1 \left| \left\{ \frac{1}{bz}\right\} 
-  \left\{ \frac{1}{az}\right\} \right| z dz \\ 
 &= \int_0^1 \left| 
 \left( \frac{1}{bz} - \frac{1}{az}\right) + 
\left(\left\lfloor \frac{1}{az}\right\rfloor 
-  \left\lfloor \frac{1}{bz}\right\rfloor\right) \right| z dz \\ 
 &\leq 
\int_0^1 \left( \left(\frac{1}{bz} - \frac{1}{az} \right) + 
\sum_{\frac{1}{az} < n \leq \frac{1}{bz}} 1\right) z dz \\ 
 &= \delta + \sum_{n > \frac{1}{a}}\int_{\frac{1}{an}}^{\min\{ \frac{1}{bn} , 1\}} z dz \\ 
 &= \delta + 
{\textstyle\frac{1}{2}} \sum_{\frac{1}{a} < n \leq \frac{1}{b}} \left( 1 - \frac{1}{a^2 n^2} \right) 
+ {\textstyle\frac{1}{2}}\sum_{n > \frac{1}{b}} \left( \frac{1}{b^2 n^2} - \frac{1}{a^2 n^2} \right)  \\ 
 &\leq \delta + {\textstyle\frac{1}{2}}  \left( 1 - \frac{b^2}{a^2} \right) \sum_{\frac{1}{a} < n \leq \frac{1}{b}} 1 +  
{\textstyle\frac{1}{2}} \left( \frac{1}{b^2} - \frac{1}{a^2}\right) \sum_{n > \frac{1}{b}} \frac{1}{n^2} . 
\end{align*}
This, together with \eqref{deltaSmall} (and our assumption that $0<b<a\leq 1$), yields: 
\begin{align*} 
\Delta_1 (a,b) &\leq \delta + {\textstyle\frac{1}{2}}  
\left( 1 + \frac{b}{a} \right)\left( 1 - \frac{b}{a} \right) \cdot (1) +  
{\textstyle\frac{1}{2}} \left( \frac{1}{b} + \frac{1}{a}\right) \delta \cdot \left( b^2 + b\right) \\ 
& \leq \delta + {\textstyle\frac{1}{2}}  \left( 2 \right) b \delta +  
{\textstyle\frac{1}{2}} \left( \frac{2}{b} \right) \delta \cdot \left( 2 b\right) 
\leq 4\delta ,
\end{align*}
which is \eqref{Delta1Bound}. 
\end{proof}
\begin{lemma} 
Let $C_0$ be the constant defined in \eqref{DefC0}. Let $j\in{\mathbb N}$. 
Then 
\begin{equation*} 
\left| \phi_j(x) - \phi_j(y)\right| \leq 
4 C_0 \lambda_j^4 \cdot \left| \frac{1}{x} - \frac{1}{y}\right| 
\quad\text{($0<x,y\leq 1$).}  
\end{equation*} 
\end{lemma}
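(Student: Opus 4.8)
The plan is to combine the eigenfunction equation \eqref{DefEigenfunction} with the Lipschitz-type estimate \eqref{Delta1Bound} of Lemma~3.4 and the linear-in-$x$ bound for $|\phi_j(x)|$ supplied by Theorem~3.2. First I would write, for $0<x,y\leq 1$,
\[
\phi_j(x)-\phi_j(y) = \lambda_j \int_0^1 \bigl( K(x,z) - K(y,z)\bigr)\phi_j(z)\,dz ,
\]
which is immediate from \eqref{DefEigenfunction}. Taking absolute values and using the triangle inequality for integrals gives
\[
\bigl|\phi_j(x)-\phi_j(y)\bigr| \leq |\lambda_j|\int_0^1 \bigl|K(x,z)-K(y,z)\bigr|\cdot|\phi_j(z)|\,dz .
\]

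The key point is that the factor $|\phi_j(z)|$ should be estimated not by the uniform bound \eqref{PhiBoundedUniformly} but by the sharper pointwise bound $|\phi_j(z)|\leq C_0|\lambda_j|^3 z$ of Theorem~3.2, valid for $0<z\leq 1$ (and trivially true at $z=0$, where $\phi_j(0)=0$). This inserts exactly the extra factor of $z$ into the integrand that is needed to bring $\Delta_1$ into play: substituting yields
\[
\bigl|\phi_j(x)-\phi_j(y)\bigr| \leq C_0|\lambda_j|^4 \int_0^1 \bigl|K(x,z)-K(y,z)\bigr|\,z\,dz = C_0|\lambda_j|^4\,\Delta_1(x,y),
\]
the last equality being just Definitions~2.5 in the case $r=1$. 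Finally, Lemma~3.4 gives $\Delta_1(x,y)\leq 4\bigl|\tfrac1x-\tfrac1y\bigr|$, and since $|\lambda_j|^4=\lambda_j^4$ the asserted inequality follows at once.

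I do not anticipate any genuine obstacle: the argument is a short three-link chain. The only decision requiring a little care is the choice to invoke Theorem~3.2 rather than the cruder bound \eqref{PhiBoundedUniformly}; the latter would force us to work with $\Delta_0(x,y)=\int_0^1|K(x,z)-K(y,z)|\,dz$, a quantity for which the excerpt provides no good bound in terms of $|1/x-1/y|$. It is also worth remarking that the single exceptional point $z=0$ in the application of Theorem~3.2 is harmless, having Lebesgue measure zero.
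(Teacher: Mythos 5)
Your proof is correct and follows essentially the same route as the paper: use \eqref{DefEigenfunction} to express the difference as an integral, insert the pointwise bound $|\phi_j(z)|\leq C_0|\lambda_j|^3 z$ from Theorem~3.2 to bring in $\Delta_1(x,y)$, and then apply the bound \eqref{Delta1Bound} (which is Lemma~3.3, not Lemma~3.4 as you wrote, but that is only a labeling slip).
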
 
\begin{proof} 
Let $0<x,y\leq 1$. By \eqref{DefEigenfunction} and Theorem~3.2, we find that 
\begin{align*} 
\left| \phi_j(x) - \phi_j(y)\right| &\leq  
\left| \lambda_j \int_0^1 \left( K(x,z) - K(y,z)\right) 
z \cdot \left(\frac{\phi_j (z)}{z}\right) dz\right| \\ 
 &\leq C_0 \lambda_j^4 \Delta_1 (x,y) ,
\end{align*} 
where $\Delta_1 (x,y)$ is as described in Definitions~2.5. 
By this, together with the upper bound \eqref{Delta1Bound} for $\Delta_1 (x,y)$, 
the lemma follows. 
\end{proof} 
\begin{remarks}{\it 1)} Let $j\in{\mathbb N}$.  
Then, by Lemma~3.4, the eigenfunction $\phi_j(x)$ satisfies 
a uniform Lipschitz condition of order $1$ on each closed interval $[a,b]\subset (0,1]$. 
In particular, for all $\varepsilon > 0$, 
there is some $M_{j,\varepsilon} <\infty$ such that 
\begin{equation*} 
\left| \phi_j(x) - \phi_j(y)\right|\leq M_{j,\varepsilon} |x-y| 
\qquad\text{for all $x,y\in [\varepsilon , 1]$} 
\end{equation*}  
(Lemma~3.4 implies that this holds with $M_{j,\varepsilon} := 4 C_0 \lambda_j^4 \varepsilon^{-2}$). 
It follows that the function $\phi_j(x)$ is absolutely continuous 
on each closed interval $[a,b]\subset (0,1]$, and so is of 
bounded variation on any such interval (this last fact may also be deduced   
directly from Lemma~3.4). 
\item{\it 2)} We will later improve upon Lemma~3.4: see Corollary~4.10. 
\end{remarks} 

\section{The first derivative} 
Let $j\in{\mathbb N}$, and put $\lambda = \lambda_j$ and $\phi (x) = \phi_j (x)$ ($0\leq x\leq 1$). 
We recall (see our Remarks following Lemma~3.4)  
that $\phi(x)$ is of bounded variation 
(and is, moreover, absolutely continuous) on any closed interval $[a,b]\subset (0,1]$.  
\par
It is well-known (see \cite[Sections~11.3--11.42]{Titchmarsh 1939}, for example) 
that any function that is of bounded variation 
on some interval $X$ must be differentiable {\it almost everywhere} 
(with respect to the Lebesgue measure) in that same interval. 
If the function in question is absolutely continuous on $X$, and if $X$ is compact,  
then the derivative of the function is Lebesgue integrable on $X$ (even if  
the set of points at which that derivative is defined is a proper subset of $X$) 
and the function is (on $X$) a Lebesgue indefinite integral of its derivative: 
for proof of this see \cite[Sections~11.4, 11.54, 11.7 and~11.71]{Titchmarsh 1939}. 
By applying these observations to our eigenfunction $\phi(x)$, 
we deduce from what was noted in the preceding paragraph 
that $\phi$ is differentiable almost everywhere in $[0,1] =  
\{ 0\}\cup\left( \cup_{n\in{\mathbb N}} \left[ n^{-1} , 1\right]\right)$, 
that the derivative $\phi'(x)$ is Lebesgue integrable 
on any closed interval $[a,b]\subset (0,1]$, 
and that 
\begin{equation}\label{IndefiniteIntegral}
\phi(1) - \phi(x)=\int_x^1 \phi'(y) \,dy\qquad\hbox{($0<x\leq 1$).}
\end{equation} 
By this and Theorem~2.10, one has:  
\begin{equation}\label{phi(1)asIntegral}
\lim_{x\rightarrow 0+}\int_x^1 \phi'(y) \,dy = \phi(1) - \phi(0) = \phi(1)\;.
\end{equation} 
For more specific information about $\phi'(x)$ we need the following result. 
\begin{theorem} 
The function $x\mapsto x^{-1} \phi(x)$ is Lebesgue integrable on $[0,1]$,  
and so 
\begin{equation}\label{DefPhi1}
{\mathbb R}\ni\int_0^1 \frac{\phi(y) dy}{y} = \Phi_1 \quad\text{(say).}  
\end{equation} 
When $0 < x < 1$ and  $\frac{1}{x}$ is not an integer, one has 
\begin{equation}\label{DerivativeFormula1} 
\lambda^{-1} x^2 \phi'(x) = \Phi_1 - \sum_{m>\frac{1}{x}} \frac{\phi\left(\frac{1}{mx}\right)}{m} \in{\mathbb R}. 
\end{equation} 
For $n\in{\mathbb N}$, the derivative $\phi'(x)$ is a continuous function on 
the interval $\left( (n+1)^{-1} , n^{-1}\right)$, and one has both 
\begin{equation}\label{LefthandDphi} 
\lim_{x\rightarrow \frac{1}{n} -} \phi'(x) 
= \lambda n^2 \left( \Phi_1 - \sum_{m=n+1}^{\infty} \frac{\phi\left(\frac{n}{m}\right)}{m}\right)\in{\mathbb R}  
\end{equation} 
and 
\begin{equation}\label{RighthandDphi} 
\lim_{x\rightarrow \frac{1}{n+1} +} \phi'(x) 
= \lambda (n+1)^2 \left( \Phi_1 - \sum_{m=n+1}^{\infty} \frac{\phi\left(\frac{n+1}{m}\right)}{m}\right)\in{\mathbb R} . 
\end{equation} 
\end{theorem}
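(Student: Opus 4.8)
The plan is to feed the eigenfunction $\phi=\phi_j$ into the Euler--Maclaurin identity \eqref{K_action=EuMacErr} and then differentiate the resulting closed form on each interval $\bigl((n+1)^{-1},n^{-1}\bigr)$. By Theorem~3.2 one has $|\phi(x)|/x\le C_0|\lambda|^3$ for $0<x\le 1$, so $x\mapsto x^{-1}\phi(x)$ is bounded and measurable on $[0,1]$, hence Lebesgue integrable; this is the integrability claim of \eqref{DefPhi1} and shows $\Phi_1=\int_0^1\phi(y)\,y^{-1}\,dy\in\mathbb R$. Since $\phi$ is continuous on $[0,1]$ (Theorem~2.10) and $\int_0^1|\phi(x)|\,x^{-1}\,dx<\infty$, the identity \eqref{K_action=EuMacErr} applies with $f=\phi$. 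Writing $\Psi(z):=\int_0^z\phi(y)\,dy$ and $\Phi_0:=\Psi(1)$, and using $\phi(x)=\lambda\int_0^1 K(x,y)\phi(y)\,dy$, this yields
\[
\phi(x)=\lambda\Biggl(\sum_{n>1/x}\Psi\!\left(\tfrac{1}{nx}\right)-\int_{1/x}^{\infty}\Psi\!\left(\tfrac{1}{\nu x}\right)d\nu+\Phi_0\,K(1,x)\Biggr)\qquad(0<x\le 1),
\]
where Theorem~3.2 also gives $|\Psi(z)|\le\tfrac12 C_0|\lambda|^3 z^2$, so every series and integral here is absolutely convergent.

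Next I would evaluate the integral. The substitution $u=1/(\nu x)$ converts it into $x^{-1}\int_0^1\Psi(u)\,u^{-2}\,du$, and Fubini's theorem — legitimate because $\int_0^1\!\int_0^u|\phi(y)|\,u^{-2}\,dy\,du\le\int_0^1|\phi(y)|\,y^{-1}\,dy<\infty$ — gives $\int_0^1\Psi(u)\,u^{-2}\,du=\int_0^1\phi(y)\bigl(y^{-1}-1\bigr)dy=\Phi_1-\Phi_0$. Hence, on $\bigl((n+1)^{-1},n^{-1}\bigr)$, where $\lfloor 1/x\rfloor=n$ (so that $\sum_{m>1/x}$ equals $\sum_{m=n+1}^{\infty}$) and $K(1,x)=\tfrac12-\tfrac1x+n$ by \eqref{DefK}, one obtains
\[
\phi(x)=\lambda\sum_{m=n+1}^{\infty}\Psi\!\left(\tfrac{1}{mx}\right)-\frac{\lambda(\Phi_1-\Phi_0)}{x}+\lambda\Phi_0\Bigl(\tfrac12-\tfrac1x+n\Bigr).
\]

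Each term $\Psi(1/(mx))$ is $C^1$ in $x$ with derivative $-\phi(1/(mx))/(mx^2)$, and by Theorem~3.2 one has $\bigl|\phi(1/(mx))/(mx^2)\bigr|\le C_0|\lambda|^3/(m^2 x^3)$; thus the differentiated series converges uniformly on compact subintervals of $\bigl((n+1)^{-1},n^{-1}\bigr)$ (Weierstrass $M$-test), so term-by-term differentiation is valid there and $\phi$ is genuinely $C^1$ on that interval (consistent with the a.e.\ differentiability noted before the theorem). Differentiating the displayed formula, the two $\Phi_0$-contributions cancel and one is left with exactly $\lambda^{-1}x^2\phi'(x)=\Phi_1-\sum_{m>1/x}\phi(1/(mx))/m$, which is \eqref{DerivativeFormula1}; continuity of $\phi'$ on $\bigl((n+1)^{-1},n^{-1}\bigr)$ then follows since $\phi$ is continuous (Theorem~2.10) and the series on the right converges locally uniformly. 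For the one-sided limits, the $M$-test bound is in fact uniform on the whole closed interval $[(n+1)^{-1},n^{-1}]$ (as $x^{-3}$ is bounded there), so $\sum_{m=n+1}^{\infty}\phi(1/(mx))/m$ converges uniformly on it and one may pass to the limit under the sum: using the continuity of $\phi$ to replace $\phi(1/(mx))$ by $\phi(n/m)$ as $x\to\tfrac1n-$ and by $\phi\bigl(\tfrac{n+1}{m}\bigr)$ as $x\to\tfrac1{n+1}+$, and $x^{-2}$ by $n^2$ resp.\ $(n+1)^2$, yields precisely \eqref{LefthandDphi} and \eqref{RighthandDphi}. Reality of all quantities involved is automatic from that of $\phi$, $\lambda$ and $\Phi_1$.

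The only slightly delicate points should be the justification of the term-by-term differentiation and of the Fubini interchange; both reduce to routine estimates once the Lipschitz-type bound of Theorem~3.2 is in hand, and the real content of the argument is simply the observation that \eqref{K_action=EuMacErr} converts the eigenfunction equation into an explicitly differentiable expression.
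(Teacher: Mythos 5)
Your proof is correct, but it follows a genuinely different route from the paper's. The paper's proof decomposes $\lambda^{-1}\phi(x)=\int_0^1 K(x,y)\phi(y)\,dy$ directly, cutting the range of integration at the discontinuities of $y\mapsto K(x,y)$ to write it as $r_H(x)+u_0(x)+\sum_{h\geq 1}u_h(x)$, where each $u_h$ is a Leibniz-differentiable integral; its term-by-term differentiation then yields the boundary contributions $\phi(1/((n+h)x))$ and $\phi(1/((n+h+1)x))$ whose telescoping produces \eqref{DerivativeFormula1}. You instead invoke the Euler--Maclaurin identity \eqref{K_action=EuMacErr} (which packages exactly that decomposition plus a summation by parts), then evaluate the integral $\int_{1/x}^{\infty}\Psi(1/(\nu x))\,d\nu$ cleanly via Fubini to get $(\Phi_1-\Phi_0)/x$, so that $\phi$ is expressed as a single series $\lambda\sum_{m>1/x}\Psi(1/(mx))$ plus two explicit $C^1$ terms. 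This is shorter and more transparent --- the cancellation of the $\Phi_0$-contributions on differentiation is particularly clean, and the constancy of $\lfloor 1/x\rfloor$ on $((n+1)^{-1},n^{-1})$ is used only once, via $K(1,x)=\tfrac12-\tfrac1x+n$. The trade-off is that \eqref{K_action=EuMacErr} is merely asserted in Section 1 and never proved in the paper; the paper's argument for Theorem 4.1 is self-contained and does not rely on it, whereas yours does. Both routes then rely on the same machinery (Theorem 3.2 for the $M$-test bound, Apostol's term-by-term differentiation theorem, continuity of $\phi$ plus uniform convergence on the closed interval $[(n+1)^{-1},n^{-1}]$ for the one-sided limits), and the final formula agrees.
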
 
\begin{proof} 
For $n\in{\mathbb N}$ and $0\leq x\leq 1$ we put 
\begin{equation*}
f_n(x) = \begin{cases} 
x^{-1} \phi(x) & \text{if $x\geq n^{-1}$} , \\ 
0 & \text{otherwise} . 
\end{cases}
\end{equation*} 
Since $\phi$ is a measurable function on $[0,1]$, 
it follows that $f_1,f_2,f_3,\ldots $ is a sequence of measurable functions on $[0,1]$. 
Theorem~3.2 implies that this sequence of functions is uniformly bounded. 
Given these facts, and given that the equality 
$\lim_{n\rightarrow\infty} f_n(x) = x^{-1} \phi(x)$ 
holds almost everywhere on $[0,1]$ (everywhere except at $x=0$, in fact), 
it therefore follows by Lebesgue's theorem of bounded convergence 
\cite[Section~10.5]{Titchmarsh 1939} that one has what is 
stated in the first part of the theorem (i.e. up to and including \eqref{DefPhi1}). 
\par
To complete this proof we shall show that \eqref{DerivativeFormula1} holds whenever 
$x$ satisfies the attached conditions. Those conditions imply that, for 
some positive integer $n$, one has 
\begin{equation}\label{ProofC1}
\frac{1}{n+1} < x < \frac{1}{n} .
\end{equation} 
Thus it will be enough to show that, when $n\in{\mathbb N}$, 
one has \eqref{DerivativeFormula1} for all $x$ satisfying \eqref{ProofC1}. 
\par
Let $n\in{\mathbb N}$. Then it follows from \eqref{DefEigenfunction} and \eqref{DefK} that, 
for $x$ satisfying \eqref{ProofC1} and $H\in{\mathbb N}$, one has: 
\begin{align*} 
\lambda^{-1}\phi(x) 
 &= \int_0^{\frac{1}{(n+H+1)x}} K(x,y) \phi(y) dy  + 
 \int_{\frac{1}{(n+1)x}}^1 \left( \frac{1}{2} +  n - \frac{1}{xy} \right) \phi(y) dy \\
 &\phantom{{=}} + \sum_{h=1}^H \int_{\frac{1}{(n+h+1)x}}^{\frac{1}{(n+h)x}} 
\left( \frac{1}{2} +  n + h - \frac{1}{xy} \right) \phi(y) dy\\  
 &=  r_H(x) + u_0(x) + \sum_{h=1}^H u_h(x) \quad\text{(say).} 
\end{align*}
By \eqref{DefK} and the Theorem~3.2, the above term $r_H(x)$ satisfies 
\begin{equation*} 
\left| r_H(x)\right| \leq \int_0^{\frac{1}{(n+H+1)x}} \left| K(x,y) \phi(y)\right| dy
\leq {\textstyle\frac{1}{2}} C_0 |\lambda|^3 \int_0^{\frac{1}{(n+H+1)x}} y dy  
< \frac{C_0 |\lambda|^3}{x^2 H^2} . 
\end{equation*} 
Thus $r_H(x)\rightarrow 0$ as $H\rightarrow\infty$, so that we have  
\begin{equation}\label{ProofC2} 
\lambda^{-1}\phi(x) = 
u_0(x) + \sum_{h=1}^{\infty} u_h(x) , 
\quad\text{when $x$ satisfies \eqref{ProofC1}.} 
\end{equation} 
\par 
We now contemplate term-by-term differentiation of the right-hand side 
of Equation~\eqref{ProofC2}, on the assumption that $x$ satisfies \eqref{ProofC1}. 
But first let us define functions $v_0(x),v_1(x),v_2(x),\ldots $   
on the closed interval $\left[ (n+1)^{-1} , n^{-1}\right]$, by specifying that  
\begin{equation*} 
x^2 v_h(x) = 
\begin{cases} 
{\displaystyle\int_{\frac{1}{(n+1)x}}^1 \frac{\phi(y) dy}{y} 
- \frac{\phi\left(\frac{1}{(n+1)x}\right)}{2(n+1)}} 
 & \text{if $h=0$} , \\ 
{\displaystyle\int_{\frac{1}{(n+h+1)x}}^{\frac{1}{(n+h)x}} \frac{\phi(y) dy}{y} 
- \frac{\phi\left(\frac{1}{(n+h+1)x}\right)}{2(n+h+1)} - 
\frac{\phi\left(\frac{1}{(n+h)x}\right)}{2(n+h)}}  
 & \text{if $h\in{\mathbb N}$} 
\end{cases}
\end{equation*}
(note the function $x\mapsto x^{-1} \phi(x)$ is integrable on $[0,1]$, 
and so is also integrable on all of the ranges of integration occurring here, 
since these ranges are subintervals of $[0,1]$ whenever $x\geq (n+1)^{-1}$).   
Using the part of the theorem that was already proved, we deduce that, when 
$H\in{\mathbb N}$ and $(n+1)^{-1}\leq x\leq n^{-1}$, one has: 
\begin{equation}\label{ProofC3}  
\int_0^{\frac{1}{(n+H+1)x}} \frac{\phi(y) dy}{y} + 
\frac{\phi\left(\frac{1}{(n+H+1)x}\right)}{2(n+H+1)} +   
x^2\sum_{h=0}^H v_h (x) = 
\Phi_1 - \sum_{h=1}^H \frac{\phi\left(\frac{1}{(n+h)x}\right)}{n+h} .
\end{equation} 
\par 
Since $\phi(x)$ is continuous on $(0,1]$, we find that the function 
$v_0(x)$, and each function in the sequence $v_1(x),v_2(x),v_3(x)\ldots $, 
is continuous on the closed interval $\left[ (n+1)^{-1} , n^{-1}\right]$. 
By Theorem~3.2, we  find also that, when $h\in{\mathbb N}$ and 
$(n+1)^{-1}\leq x\leq n^{-1}$, one has  
\begin{align*} 
\left| v_h (x)\right| 
 &\leq \frac{1}{x^2}\int_{\frac{1}{(n+h+1)x}}^{\frac{1}{(n+h)x}} C_0 |\lambda|^3 dy  
+ \frac{C_0 |\lambda|^3}{2(n+h+1)^2 x^3} + \frac{C_0 |\lambda|^3}{2(n+h)^2 x^3} \\
 &= {\textstyle\frac{1}{2}} C_0 \left( \frac{|\lambda|}{x}\right)^3 \left( \frac{1}{n+h+1} + \frac{1}{n+h}\right)^2 
 < \frac{2 C_0 |\lambda|^3 (n+1)^3}{h^2} . 
\end{align*} 
Thus application of the Weierstrass $M$-test \cite[Theorem~9.6]{Apostol 1974}  
shows that the series $v_1(x) + v_2(x) + v_3(x) + \ldots $ is uniformly convergent 
on the interval $\left[ (n+1)^{-1} , n^{-1}\right]$. Therefore, 
given that each of $v_1(x),v_2(x),v_3(x)\ldots $ (and $v_0(x)$ also)  
is continuous on $\left[ (n+1)^{-1} , n^{-1}\right]$, it follows that we have 
\begin{equation}\label{ProofC4} 
\sum_{h=0}^{\infty} v_h (x) = v_0 (x) + \sum_{h=1}^{\infty} v_h (x) = g(x) 
\quad\text{for all $x\in\left[\frac{1}{n+1} , \frac{1}{n}\right]$,} 
\end{equation} 
where $g(x)$ is some continuous real-valued function on $\left[ (n+1)^{-1} , n^{-1}\right]$.  
\par
We observe now that, by Theorem~3.2, the sum of first two terms on 
the left-hand side of Equation~\eqref{ProofC3} is a number $\rho_H(x)$ 
that satisfies  
\begin{equation*} 
\left|\rho_H (x)\right| \\ 
\leq \left( \frac{C_0 |\lambda|^3}{(n+H+1)x}\right)\left( 1 + \frac{1}{2(n+H+1)}\right) .
\end{equation*} 
In particular, for each fixed $x\in\left[ (n+1)^{-1} , n^{-1}\right]$, 
we have $\rho_H (x) \rightarrow 0$ as $H\rightarrow\infty$.
This, together with \eqref{ProofC3} and \eqref{ProofC4}, enables 
us to deduce that, for $(n+1)^{-1}\leq x\leq n^{-1}$, one has 
\begin{equation}\label{ProofC5} 
\sum_{m=n+1}^{\infty} \frac{\phi\left(\frac{1}{m x}\right)}{m} = 
\Phi_1 - x^2\sum_{h=0}^{\infty} v_h (x) = \Phi_1 - x^2 g(x)\in{\mathbb R} .
\end{equation}
\par 
Assuming that \eqref{ProofC1} holds, it follows 
by \eqref{DefK}, Theorem~2.10 and elementary calculus that one has 
\begin{align}\label{ProofC6} 
u_0'(x) 
 &= \frac{d}{dx}\int_{\frac{1}{(n+1)x}}^1 \left( \frac{1}{2} +   
n - \frac{1}{xy} \right) \phi(y) dy \nonumber\\ 
 &= \int_{\frac{1}{(n+1)x}}^1 
\frac{\partial}{\partial z}\left( \left( \frac{1}{2} + n - \frac{1}{zy} \right) \phi(y)\right) 
\biggr|_{z=x}  dy \nonumber\\ 
 &\phantom{{=}} - \left( \frac{1}{2} +  n - \frac{1}{xy} \right) \phi(y)\biggr|_{y=\frac{1}{(n+1)x}}  
\cdot\frac{d}{dx}\left( \frac{1}{(n+1)x}\right) \nonumber\\ 
 &=\frac{1}{x^2} \int_{\frac{1}{(n+1)x}}^1 \frac{\phi(y) dy}{y} 
+ {\textstyle\frac{1}{2}} \phi\left(\frac{1}{(n+1)x}\right) \cdot \frac{(-1)}{(n+1) x^2} \nonumber\\ 
 &= v_0(x) . 
\end{align} 
Similarly, for $h\in{\mathbb N}$, we find (subject to \eqref{ProofC1} holding) that 
\begin{align}\label{ProofC7} 
u_h'(x) &= \frac{d}{dx} \int_{\frac{1}{(n+h+1)x}}^{\frac{1}{(n+h)x}} 
\left( \frac{1}{2} +  n + h - \frac{1}{xy} \right) \phi(y) dy \nonumber\\ 
 &= \frac{1}{x^2} \int_{\frac{1}{(n+h+1)x}}^{\frac{1}{(n+h)x}} \frac{\phi(y) dy}{y} 
+ {\textstyle\frac{1}{2}} \phi\left(\frac{1}{(n+h+1)x}\right) \cdot \frac{(-1)}{(n+h+1) x^2} \nonumber\\ 
 &\phantom{{=}} + {\textstyle\frac{1}{2}} \phi\left(\frac{1}{(n+h)x}\right) \cdot \frac{(-1)}{(n+h) x^2} \nonumber\\ 
 &= v_h (x) . 
\end{align} 
\par
In preparation for the next steps, we now recall and process 
certain pertinent facts 
that have already been established. 
\par 
We have seen that the functions 
$u_0(x),u_1(x),u_2(x),\ldots $ (defined, implicitly, a few lines above \eqref{ProofC2}) 
are real-valued, and are defined on the interval $\left( (n+1)^{-1} , n^{-1}\right)$. 
We found that, at all points $x$ of the same open interval, 
the series $u_0(x) + u_1(x) + u_2(x) + \ldots $ is convergent  
and the derivatives $u_0'(x),u_1'(x),u_2'(x),\ldots $ exist and are finite 
(their values were computed in \eqref{ProofC6} and \eqref{ProofC7}).   
Moreover, since the series $v_1(x) + v_2(x) + v_3(x) + \ldots $ was 
found to be uniformly convergent on $\left[ (n+1)^{-1} , n^{-1}\right]$, 
and since we have (by $\eqref{ProofC7}$) $u_h'(x) = v_h(x)$ 
whenever $(n+1)^{-1} < x < n^{-1}$ and $h\in{\mathbb N}$, we may make the (trivial) deductions   
that the series $u_1'(x) + u_2'(x) + u_3'(x) + \ldots $ is uniformly convergent on 
$\left( (n+1)^{-1} , n^{-1}\right)$, and that the same may therefore be said of   
the series $u_0'(x) + u_1'(x) + u_2'(x) + \ldots $.  
\par
Given the fact just noted (in the last paragraph), it follows by 
\cite[Theorem~9.14]{Apostol 1974} that the function $x\mapsto \sum_{h=0}^{\infty} u_h(x)$ 
is differentiable at all points of the interval $\left( (n+1)^{-1} , n^{-1}\right)$, 
and that one has: 
\begin{equation}\label{ProofC8} 
\frac{d}{dx} \sum_{h=0}^{\infty} u_h (x) = 
\sum_{h=0}^{\infty} u_h' (x) 
\quad\text{when $x$ satisfies \eqref{ProofC1}} . 
\end{equation} 

Subject to \eqref{ProofC1} holding, it follows by \eqref{ProofC2}, \eqref{ProofC8}, 
\eqref{ProofC6}, \eqref{ProofC7} and \eqref{ProofC4} that $\phi'(x)$ exists, and that one has 
\begin{equation}\label{ProofC9}  
\lambda^{-1} \phi'(x) = 
\sum_{h=0}^{\infty} u_h'(x) = 
\sum_{h=0}^{\infty} v_h(x) = g(x) . 
\end{equation} 
We recall that the function $g(x)$ was shown to be continuous on 
the closed interval $\left[ (n+1)^{-1} , n^{-1}\right]$. 
Thus it is a corollary of \eqref{ProofC9}  that the derivative $\phi'(x)$ 
is a continuous function on $\left( (n+1)^{-1} , n^{-1}\right)$, and that one has: 
\begin{equation}\label{ProofC10} 
\lim_{x\rightarrow \frac{1}{n} -} \phi'(x) = \lambda g\left(\frac{1}{n}\right) 
\quad\text{and}\quad 
\lim_{x\rightarrow \frac{1}{n+1} +} \phi'(x) = \lambda g\left(\frac{1}{n+1}\right)  . 
\end{equation} 
With the help of \eqref{ProofC5}, we deduce from \eqref{ProofC10} and \eqref{ProofC9} 
what is stated in \eqref{LefthandDphi} and \eqref{RighthandDphi}, and also    
the cases of \eqref{DerivativeFormula1} in which $x$ satisfies \eqref{ProofC1}. 
This (as explained earlier) completes our proof of the theorem.
\end{proof} 
\begin{corollary} 
When $n\in{\mathbb N}$, the  restriction of $\phi(x)$ to the 
closed interval $\left[ (n+1)^{-1} , n^{-1}\right]$ is continuously differentiable on $\left[ (n+1)^{-1} , n^{-1}\right]$. 
\end{corollary}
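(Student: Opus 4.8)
The plan is to read off everything needed from the proof of Theorem~4.1 and then patch in the two endpoints with a routine mean-value-theorem argument. Write $a:=(n+1)^{-1}$ and $c:=n^{-1}$. From the proof of Theorem~4.1 (specifically \eqref{ProofC4} and \eqref{ProofC9}, together with the conditions in \eqref{ProofC1}) we have a continuous real-valued function $g$ on the \emph{closed} interval $[a,c]$ such that $\lambda^{-1}\phi'(x)=g(x)$ for every $x$ in the open interval $(a,c)$; and by Theorem~2.10 the function $\phi$ is continuous on $[0,1]$, hence in particular on $[a,c]$. So the only thing left to establish is that the restriction $\phi|_{[a,c]}$ is differentiable at $x=a$ and at $x=c$ (one-sided derivatives there), with those one-sided derivatives equal to $\lambda g(a)$ and $\lambda g(c)$ respectively.

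First I would treat the right endpoint. For any $x\in(a,c)$ the function $\phi$ is continuous on $[x,c]$ and differentiable on $(x,c)$, so the mean value theorem gives a point $\xi_x\in(x,c)$ with $\phi(c)-\phi(x)=\phi'(\xi_x)(c-x)=\lambda g(\xi_x)(c-x)$; letting $x\to c-$ forces $\xi_x\to c-$, and the continuity of $g$ at $c$ then shows that the left-hand derivative of $\phi$ at $c$ exists and equals $\lambda g(c)$. An identical argument applied to the intervals $[a,x]$ with $x\to a+$ shows that the right-hand derivative of $\phi$ at $a$ exists and equals $\lambda g(a)$. (Alternatively one could cite the limit formulas \eqref{LefthandDphi} and \eqref{RighthandDphi} of Theorem~4.1 in place of this computation, since they identify $\lambda g(1/n)$ and $\lambda g(1/(n+1))$ explicitly.)

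Combining this with \eqref{ProofC9}, the derivative of $\phi|_{[a,c]}$ exists at every point of $[a,c]$ and is given there by $x\mapsto\lambda g(x)$; since $g$ is continuous on $[a,c]$, this derivative function is continuous on $[a,c]$, which is exactly the assertion of the corollary. I do not expect any genuine obstacle here: the one point that must be kept straight is that $\phi'(x)$ in the sense of Theorem~4.1 is not claimed to exist when $1/x\in\mathbb{N}$ (so possibly not at $x=a$ or $x=c$), whereas what the mean value theorem produces are the one-sided derivatives of the \emph{restriction} $\phi|_{[a,c]}$, and it is these — not the two-sided derivative of $\phi$ — that the statement of the corollary concerns.
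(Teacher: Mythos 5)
Your proposal is correct and follows essentially the same route as the paper's own proof: apply the mean value theorem on $[a,y]$ and $[y,c]$ to identify the one-sided derivatives of the restriction at the endpoints with $\lim_{x\to a+}\phi'(x)$ and $\lim_{x\to c-}\phi'(x)$, which exist and agree with the interior values of the continuous function $\lambda g$ by \eqref{RighthandDphi}, \eqref{LefthandDphi} (equivalently \eqref{ProofC9}). Your closing caveat about distinguishing the two-sided derivative of $\phi$ from the one-sided derivatives of the restriction is exactly the point the paper's proof is careful about.
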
 
\begin{proof} 
Let $n\in{\mathbb N}$, $a=\frac{1}{n+1}$ and $b=\frac{1}{n}$. 
Let $\rho(x)$ is the restriction of $\phi(x)$ to the interval $[a,b]$. 
\par 
Suppose, firstly, that $a<y\leq b$. Then one has 
$\frac{\rho(y) -\rho(a)}{y-a} = \frac{\phi(y)-\phi(a)}{y-a}$, 
and so, since $\phi(x)$ is continuous on $[0,1]\supset [a,b] \supseteq [a,y]$, 
and is differentiable on $(a,b)\supseteq (a,y)$, 
it follows by the mean value theorem of differential calculus that, 
for some $c\in (a,y)$,  one has: 
\begin{equation}\label{ProofD1} 
\frac{\rho(y) -\rho(a)}{y-a} = \phi'(c) .
\end{equation} 
Since we have here $a<c<y$, it follows that $c\rightarrow a+$ as $y\rightarrow a+$, 
and so it may be deduced from \eqref{ProofD1} and \eqref{RighthandDphi} that one has: 
\begin{equation}\label{ProofD2} 
\rho'(a) := \lim_{y\rightarrow a+} \frac{\rho(y) -\rho(a)}{y-a} = 
\lim_{c\rightarrow a+} \phi'(c) \in{\mathbb R} . 
\end{equation} 
Using instead \eqref{LefthandDphi}, one can show (similarly) that 
\begin{equation}\label{ProofD3} 
\rho'(b) := \lim_{y\rightarrow b-} \frac{\rho(b) -\rho(y)}{b-y} = 
\lim_{c\rightarrow b-} \phi'(c) \in{\mathbb R} . 
\end{equation} 
When $a<z<b$, one has 
\begin{equation}\label{ProofD4} 
\frac{\rho(y) -\rho(z)}{y-z} = \frac{\phi(y)-\phi(z)}{y-z} 
\quad \text{for all $y\in [a,z)\cup (z,b]$, } 
\end{equation} 
and so (given that $\phi'(z)$ exists and is finite, by virtue of $\phi'(x)$ being 
continuous on $(a,b)$) one finds, by taking the limit as $y\rightarrow z$ of 
both sides of \eqref{ProofD4}, that $\rho'(z) = \phi'(z)\in{\mathbb R}$ for 
$a<z<b$. Thus $\rho'(x)$ is continuous on $(a,b)$ (since $\phi'(x)$ is), 
and $\rho'(c)$ may be substituted for $\phi'(c)$ in both 
\eqref{ProofD2} and \eqref{ProofD3}, so enabling us to conclude that 
$\rho'(x)$ is also continuous at the points $x=a$ and $x=b$. 
The derivative $\rho'(x)$ is therefore continuous on $[a,b]$. 
\end{proof} 
\begin{corollary} 
The function $\phi(x)$ is continuously differentiable on $\bigl(\frac{1}{2} , 1\bigr]$. One has  
\begin{equation}\label{DphiAt1}
{\mathbb R}\ni\phi'(1) = \lambda \Phi_1 - \lambda\sum_{m=2}^{\infty} \frac{\phi\left(\frac{1}{m}\right)}{m} , 
\end{equation} 
and also: 
\begin{equation}\label{DphiJump} 
\phi_{+}'\left( \frac{1}{n}\right) - \phi_{-}'\left( \frac{1}{n}\right) = 
-\lambda \phi(1) n 
\quad\text{for $n=2,3,4,\ldots $ ,} 
\end{equation} 
where $\phi_{+}'(x)$ and $\phi_{-}'(x)$ are, respectively,  the righthand and lefthand derivatives of $\phi(x)$ 
(so that $\phi_{\pm}'(x) := \lim_{y\rightarrow x\pm} \frac{\phi(y) - \phi(x)}{y-x}$). 
\par 
If $\phi(1) \neq 0$ then $\left\{ \frac{1}{2} , \frac{1}{3} , \frac{1}{4} , \ldots \right\}$ is the set of points of the interval $(0,1]$ at which $\phi(x)$ is not differentiable. 
\par 
If $\phi(1)=0$ then $\phi(x)$ is continuously differentiable on $(0,1]$, and 
\eqref{DerivativeFormula1} holds for all $x\in (0,1]$. 
\end{corollary}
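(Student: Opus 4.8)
The plan is to deduce the whole corollary from Theorem~4.1 and Corollary~4.2, with no new estimates required; essentially all the work is careful bookkeeping with one-sided derivatives at the points $1/n$ and a telescoping of the series appearing in \eqref{LefthandDphi} and \eqref{RighthandDphi}. First I would observe that the case $n=1$ of Corollary~4.2 says precisely that the restriction of $\phi$ to $[\tfrac12,1]$ is continuously differentiable on $[\tfrac12,1]$; since a left-hand neighbourhood of $1$ and a two-sided neighbourhood of every point of $(\tfrac12,1)$ lie inside $[\tfrac12,1]$, this immediately yields the asserted continuous differentiability of $\phi$ on $(\tfrac12,1]$, and in particular $\phi'(1)=\lim_{x\rightarrow 1-}\phi'(x)$. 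Combining this last equality with the case $n=1$ of \eqref{LefthandDphi} gives \eqref{DphiAt1}.

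Next, for each fixed $n\geq 2$, I would identify the two one-sided derivatives of $\phi$ at $1/n$ with one-sided limits of $\phi'$. The point $1/n$ is the right endpoint of $[\tfrac{1}{n+1},\tfrac1n]$ and the left endpoint of $[\tfrac1n,\tfrac1{n-1}]$; applying Corollary~4.2 (and the argument of its proof, with $\rho$ the appropriate restriction of $\phi$, noting that $\rho(y)=\phi(y)$ on a suitable one-sided neighbourhood) to each of these two closed intervals shows that $\phi_-'(\tfrac1n)=\lim_{x\rightarrow 1/n-}\phi'(x)$ and $\phi_+'(\tfrac1n)=\lim_{x\rightarrow 1/n+}\phi'(x)$, both limits being real. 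By \eqref{LefthandDphi} the former equals $\lambda n^2\bigl(\Phi_1-\sum_{m=n+1}^{\infty}\phi(n/m)/m\bigr)$, while \eqref{RighthandDphi}, applied \emph{with $n$ replaced by $n-1$}, shows the latter equals $\lambda n^2\bigl(\Phi_1-\sum_{m=n}^{\infty}\phi(n/m)/m\bigr)$. Subtracting, the two series telescope to the single term $m=n$, namely $\phi(n/n)/n=\phi(1)/n$, so $\phi_+'(\tfrac1n)-\phi_-'(\tfrac1n)=-\lambda n^2\cdot\phi(1)/n=-\lambda\phi(1)n$, which is \eqref{DphiJump}.

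The two case distinctions then follow quickly. If $\phi(1)\neq 0$ then, since $\lambda\neq 0$, \eqref{DphiJump} shows $\phi_+'(\tfrac1n)\neq\phi_-'(\tfrac1n)$ for every $n\geq 2$, so $\phi$ is not differentiable at any of the points $\tfrac12,\tfrac13,\tfrac14,\ldots$; as Theorem~4.1 supplies differentiability at every $x\in(0,1)$ that is not the reciprocal of a positive integer, and the first part of this corollary gives (left-hand) differentiability at $x=1$, the set of non-differentiability points of $\phi$ in $(0,1]$ is exactly $\{\tfrac12,\tfrac13,\tfrac14,\ldots\}$. If instead $\phi(1)=0$, then \eqref{DphiJump} gives $\phi_+'(\tfrac1n)=\phi_-'(\tfrac1n)\in\mathbb{R}$ for all $n\geq 2$, hence $\phi$ is differentiable at each such $\tfrac1n$; and $\phi'$ is continuous on $(0,1]$ because on each open interval $(\tfrac1{n+1},\tfrac1n)$ it is continuous by Theorem~4.1, at $x=1$ one has $\phi'(1)=\lim_{x\rightarrow1-}\phi'(x)$, and at each junction point $\tfrac1n$ ($n\geq2$) the common value $\phi'(\tfrac1n)$ equals both $\lim_{x\rightarrow1/n-}\phi'(x)$ and $\lim_{x\rightarrow1/n+}\phi'(x)$ by the identifications above together with $\phi_-'(\tfrac1n)=\phi_+'(\tfrac1n)$. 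Finally, to extend \eqref{DerivativeFormula1} to all of $(0,1]$ when $\phi(1)=0$, I would check it at the points not already covered by Theorem~4.1: at $x=1$ it is a restatement of \eqref{DphiAt1}, and at $x=\tfrac1n$ ($n\geq2$) one computes $\lambda^{-1}x^2\phi'(x)=\lambda^{-1}n^{-2}\phi'(\tfrac1n)=\Phi_1-\sum_{m=n+1}^{\infty}\phi(n/m)/m$ by \eqref{LefthandDphi}, which is exactly $\Phi_1-\sum_{m>1/x}\phi(1/(mx))/m$ since for $x=\tfrac1n$ the condition $m>1/x$ means $m\geq n+1$ and $1/(mx)=n/m$ (here $\phi(1)=0$ is what reconciles this with the $m\geq n$ sum arising from the right-hand side).

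I expect the one delicate point to be the index bookkeeping in the telescoping step — recognising that the right-hand derivative at $1/n$ is governed by \eqref{RighthandDphi} with $n$ replaced by $n-1$, so that both one-sided expressions share the factor $\lambda n^2$ and differ only in whether the sum over $m$ starts at $n$ or at $n+1$ — together with the (routine but necessary) verification, via the proof of Corollary~4.2, that one may legitimately replace the one-sided difference quotients $\phi_\pm'(1/n)$ by the corresponding one-sided limits of $\phi'$.
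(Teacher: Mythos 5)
Your proposal is correct and follows essentially the same route as the paper's own proof: it rests on the case $n=1$ (and, for the jump relation, the pair $n$, $n-1$) of Corollary~4.2 to identify $\phi_{\pm}'(1/n)$ with one-sided limits of $\phi'$, then invokes \eqref{LefthandDphi} and \eqref{RighthandDphi} (the latter with $n-1$ substituted for $n$) and telescopes the two series, the $m=n$ term surviving as $\phi(1)/n$. Your index bookkeeping, the continuity argument at the junction points $1/n$ when $\phi(1)=0$, and the final verification of \eqref{DerivativeFormula1} at the integer reciprocals all match the paper's reasoning.
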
 
\begin{proof}
Since the domain of $\phi(x)$ contains no number greater than $1$, it follows from the case $n=1$ of the preceding corollary that one has ${\mathbb R}\ni \phi'(1) = \lim_{x\rightarrow 1-}\phi'(x)$, so that $\phi'(x)$ is continuous at the point $x=1$. By this, together with the case $n=1$ of \eqref{LefthandDphi}, one obtains the result \eqref{DphiAt1}. Since we know (by Theorem~4.1) that $\phi'(x)$ is continuous on $\left(\frac{1}{2}, 1\right)$, 
and have just found $\phi'(x)$ to be continuous at $x=1$, 
it therefore follows (trivially) that $\phi(x)$ is continuously differentiable on $\bigl(\frac{1}{2}, 1\bigr]$. 
\par
By Corollary~4.2 again (not only in the form stated, but also with $n-1$ substituted for $n$) we find that, 
for either (consistent) choice of sign ($\pm$), one has: 
\begin{equation}\label{ProofE1} 
\phi_{\pm}'\left( \frac{1}{n}\right) = \lim_{x\rightarrow\frac{1}{n}\pm} \phi'(x) 
\quad\text{for $n=2,3,4,\ldots $ . }
\end{equation} 
The combination of \eqref{ProofE1},  \eqref{RighthandDphi} (with $n-1$ substituted for $n$) and \eqref{LefthandDphi}, yields (immediately) the result stated in \eqref{DphiJump}. 
\par 
Theorem~4.1 tells us that $\phi(x)$ is differentiable on each one of the open intervals  
$\left(\frac12 ,1\right) , \left(\frac13 ,\frac12\right) , \left(\frac14 ,\frac13\right) ,\,\ldots$~, and so 
(recalling \eqref{DphiAt1}) we may conclude that 
the set $\left\{ \frac12 , \frac13 , \frac14 ,\,\ldots\,\right\}$ contains all points of the interval $(0,1]$ at which $\phi(x)$ is not differentiable. 
If $\phi(1)\neq 0$ then, by \eqref{DphiJump}, it follows that, for $n=2,3,4,\ldots $ , we have 
$\phi_{+}'(1/n) \neq \phi_{-}'(1/n)$. 
Thus $\frac{1}{2}, \frac{1}{3}, \frac{1}{4}, \ldots $ are  (in this case) points at which $\phi(x)$ is not differentiable.
\par 
Suppose that one has instead $\phi(1)=0$, then \eqref{DphiJump} gives $\phi_{+}'(1/n) = \phi_{-}'(1/n)$, for $n=2,3,4,\ldots $. 
Thus $\phi(x)$ is (in the case being considered)  differentiable at 
every  point of the set $\left\{ \frac12 , \frac13 , \frac14 ,\,\ldots\,\right\}$. 
By this, combined with the first of our conclusions in the preceding paragraph, 
it follows that $\phi(x)$ is differentiable on $(0,1]$. By this and Corollary~4.2, 
one  may deduce that, for each $n\in{\mathbb N}$, the restriction of $\phi'(x)$ to 
the interval $\left[(n+1)^{-1} , n^{-1}\right]$ is continuous on that same interval. 
Therefore, given that each point in the sequence $\frac{1}{2}, \frac{1}{3}, \frac{1}{4}, \ldots\,$  
is a left hand boundary point of one of the intervals in the sequence 
$\left[\frac12 , 1\right] , \left[\frac13 , \frac12\right] , \left[\frac14 , \frac13\right] , \,\ldots $~, and is 
(at the same time) a right hand boundary point of another interval from the same sequence, we may conclude that the continuity of the restrictions of $\phi'(x)$ to each of those intervals implies the continuity of $\phi'(x)$ at each point in the sequence $\frac{1}{2}, \frac{1}{3}, \frac{1}{4}, \,\ldots $~. By this and the relevant result stated in Theorem~4.1, we find that $\phi'(x)$ is continuous on $(0,1)$. We showed (above) that, regardless of whether or not $\phi(1)=0$, the function $\phi'(x)$ is continuous at $x=1$. Thus we may now conclude that $\phi'(x)$ is continuous on $(0,1)\cup\{ 1\} = (0,1]$, provided that $\phi(1)$ equals $0$; moreover $\phi'(x)$ is then continuous at each point in the sequence $1, \frac{1}{2}, \frac{1}{3},\,\ldots $~, and so it follows by \eqref{LefthandDphi} that one has  \eqref{DerivativeFormula1} for all values of $x$ in that sequence; we also know (from Theorem~4.1) that \eqref{DerivativeFormula1} holds at all points of the interval $(0,1]$ that are not terms of the sequence just mentioned: we conclude that, if $\phi(1)$ equals $0$, then \eqref{DerivativeFormula1} holds for all $x\in(0,1]$. 
\end{proof} 
\begin{lemma} The definite integral $\Phi_1$ that is defined in \eqref{DefPhi1} satisfies 
\begin{equation*} 
\left| \Phi_1\right| < {\textstyle\frac{3}{2}} | \lambda | . 
\end{equation*} 
\end{lemma}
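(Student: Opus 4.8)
The plan is to feed the eigenvalue equation \eqref{DefEigenfunction} into the definition \eqref{DefPhi1} of $\Phi_1$, interchange the order of integration after a harmless truncation near $x=0$, bound the resulting inner integral uniformly, and then remove the truncation. Concretely: by Theorem~4.1 the function $x\mapsto x^{-1}\phi(x)$ is Lebesgue integrable on $[0,1]$, so $\Phi_1=\lim_{\varepsilon\rightarrow 0+}\int_\varepsilon^1 x^{-1}\phi(x)\,dx$. Fix $\varepsilon\in(0,1)$. On $[\varepsilon,1]$ one has $x^{-1}\le\varepsilon^{-1}$, and since $\phi$ is bounded (by \eqref{PhiBoundedUniformly}) while $K$ is bounded on $[0,1]\times[0,1]$, Fubini's theorem applies and gives
\[
\int_\varepsilon^1 \frac{\phi(x)}{x}\,dx = \lambda\int_\varepsilon^1\frac{1}{x}\int_0^1 K(x,y)\phi(y)\,dy\,dx = \lambda\int_0^1 \phi(y)\,\psi_\varepsilon(y)\,dy,
\]
where $\psi_\varepsilon(y):=\int_\varepsilon^1 x^{-1}K(x,y)\,dx$ for $0<y\le1$.

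The crux is the estimate $\left|\psi_\varepsilon(y)\right|\le y/6$, holding for all $\varepsilon\in(0,1)$ and all $y\in(0,1]$. To get it I would substitute $t=(xy)^{-1}$, so that $dx/x=-dt/t$, and use \eqref{KtoTildeB1} to rewrite $\psi_\varepsilon(y)=\int_{1/y}^{1/(\varepsilon y)}\widetilde B_1(t)\,t^{-1}\,dt$. Integrating by parts, with the help of the identity $\int_a^b\widetilde B_1(t)\,dt=\tfrac12\widetilde B_2(b)-\tfrac12\widetilde B_2(a)$ (already used in the proof of Lemma~2.3) and the uniform bound $|\widetilde B_2(t)|\le\tfrac16$, one finds, writing $a=1/y$ and $b=1/(\varepsilon y)>a$, that
\[
\psi_\varepsilon(y)=\frac{\widetilde B_2(b)}{2b}-\frac{\widetilde B_2(a)}{2a}+\frac12\int_a^b\frac{\widetilde B_2(t)}{t^2}\,dt,
\qquad\text{so}\qquad
\left|\psi_\varepsilon(y)\right|\le\frac{1}{12b}+\frac{1}{12a}+\frac{1}{12}\Bigl(\frac1a-\frac1b\Bigr)=\frac{1}{6a}=\frac{y}{6}.
\]

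Granting this uniform bound, the Cauchy--Schwarz inequality together with $\|\phi\|=1$ gives
\[
\biggl|\int_\varepsilon^1\frac{\phi(x)}{x}\,dx\biggr|\le|\lambda|\int_0^1|\phi(y)|\,\frac{y}{6}\,dy\le\frac{|\lambda|}{6}\,\|\phi\|\Bigl(\int_0^1 y^2\,dy\Bigr)^{1/2}=\frac{|\lambda|}{6\sqrt3}
\]
for every $\varepsilon\in(0,1)$; letting $\varepsilon\rightarrow0+$ yields $|\Phi_1|\le|\lambda|/(6\sqrt3)<\tfrac32|\lambda|$, as required. The only genuine obstacle is the uniform-in-$\varepsilon$ bound on $\psi_\varepsilon$ (the change of variable plus the Bernoulli-function integration by parts); the rest is routine. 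If one wishes to identify the limit $\lim_{\varepsilon\rightarrow0+}\psi_\varepsilon(y)$ explicitly, one may observe—using the symmetry of $K$—that it is the improper integral appearing in the Remarks following Lemma~2.4, equal to $\tfrac12 y\,\bigl(K^2(1,y)-K_2(y,y)\bigr)$; but this is not needed for the stated inequality.
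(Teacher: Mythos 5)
Your proof is correct, and it takes a genuinely different route from the paper's. The paper's argument splits $\int_0^1 y^{-1}\phi(y)\,dy$ at a judiciously chosen point $\Delta\asymp\lambda^{-2}$, uses the Lipschitz bound $|\phi(y)|\le C_0|\lambda|^3 y$ (Theorem~3.2) on $(0,\Delta)$, and Cauchy--Schwarz together with $\|\phi\|=1$ on $(\Delta,1)$; optimizing $\Delta$ gives $|\Phi_1|<\frac32(2C_0)^{1/3}|\lambda|$. You instead feed the eigenvalue equation into the definition of $\Phi_1$, truncate at $\varepsilon$ so Fubini is trivially justified, and reduce everything to the uniform estimate $\bigl|\int_\varepsilon^1 x^{-1}K(x,y)\,dx\bigr|\le y/6$, which you obtain from the change of variable $t=(xy)^{-1}$, the identity $K(x,y)=-\widetilde B_1(1/(xy))$, one integration by parts, and $|\widetilde B_2|\le\frac16$. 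That estimate is correct (the integration by parts is legitimate because $\widetilde B_2$ is absolutely continuous), and a single application of Cauchy--Schwarz then gives $|\Phi_1|\le|\lambda|/(6\sqrt3)$, which is an order of magnitude sharper than the paper's constant. Two minor remarks: first, your displayed substitution gives $\psi_\varepsilon(y)=-\int_{1/y}^{1/(\varepsilon y)}\widetilde B_1(t)\,t^{-1}\,dt$ (a stray sign), which is harmless since you only use $|\psi_\varepsilon|$; second, you still invoke the first part of Theorem~4.1 for the integrability of $y^{-1}\phi(y)$, which itself rests on Theorem~3.2, so you have not eliminated Theorem~3.2 entirely from the dependency chain --- but your argument does avoid any further quantitative use of the Lipschitz estimate, which is precisely why your constant is so much better.
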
 
\begin{proof} 
Let $C_0$ be the constant defined in \eqref{DefC0}, and put 
$\Delta := (2 C_0)^{-2/3} \lambda^{-2}$. 
Then, since $C_0 > \frac{1}{3}$, it 
follows by \eqref{SpectrumBound} that we have $2C_0 |\lambda |^3 > \frac{16}{3}$, 
and so $0 < \Delta < \left( \frac{3}{16}\right)^{2/3} < 1$. Therefore, with the 
help of the Cauchy-Schwarz ineqality,  we obtain: 
\begin{align*} 
\left| \Phi_1\right| = \biggl| \int_0^1 \frac{\phi(y) dy}{y} \biggr| 
 &\leq \int_0^{\Delta} \frac{|\phi(y)| dy}{y} + 
\left( \int_{\Delta}^1 \frac{dy}{y^2}\right)^{1/2} \| \phi \| \\ 
 &= \int_0^{\Delta} \frac{|\phi(y)| dy}{y} + \left( \frac{1}{\Delta} - 1\right)^{1/2} \cdot 1 . 
\end{align*} 
We use Theorem~3.2 to bound the last of the integrals here, and so find that 
$\left| \Phi_1\right| < \Delta C_0 |\lambda|^3 + \Delta^{-1/2} 
= \left( 2^{-2/3} + 2^{1/3}\right) C_0^{1/3} |\lambda| = 
\frac{3}{2} (2 C_0)^{1/3} |\lambda| < \frac{3}{2} |\lambda|$. 
\end{proof} 
\begin{lemma} 
For $0<x\leq 1$, one has 
\begin{equation*} 
\Biggl| \sum_{m>\frac{1}{x}} \frac{\phi\left(\frac{1}{mx}\right)}{m} \Biggr| < 
\left( {\textstyle\frac{3}{2}} + \log |\lambda |\right) |\lambda | . 
\end{equation*} 
\end{lemma}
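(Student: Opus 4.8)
The plan is to estimate the sum term by term, using at each term whichever is smaller of two pointwise bounds for $|\phi|$: the uniform bound $|\phi(y)|\le\tfrac12|\lambda|$ from \eqref{PhiBoundedUniformly}, and the bound $|\phi(y)|\le C_0|\lambda|^3 y$ for $0\le y\le1$ from Theorem~3.2 (with $C_0$ as in \eqref{DefC0}). Writing $y_m:=\tfrac1{mx}$, so that $0<y_m<1$ whenever $m>\tfrac1x$, these two bounds for $|\phi(y_m)|$ agree at $m=T:=2C_0|\lambda|^2 x^{-1}$, the uniform one being smaller for $m\le T$ and the Theorem~3.2 one for $m\ge T$. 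Since $C_0>\tfrac13$ and $|\lambda|>2$ by \eqref{SpectrumBound}, one has $T>8C_0>\tfrac83>2$ and $T>\tfrac1x$, so the point of division lies strictly inside the range of summation.

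First I would split $\sum_{m>1/x}$ at $m=T$, obtaining
\[
\Bigl|\sum_{m>\frac1x}\frac{\phi(y_m)}{m}\Bigr|\le A+B,\qquad
A:=\tfrac12|\lambda|\!\!\sum_{\frac1x<m\le T}\!\!\frac1m,\quad
B:=C_0|\lambda|^3 x^{-1}\!\!\sum_{m>T}\!\!\frac1{m^2}.
\]
For $B$ the telescoping estimate $\sum_{m\ge N}m^{-2}<\tfrac1{N-1}$ (integers $N\ge2$) gives $\sum_{m>T}m^{-2}<1/\lfloor T\rfloor<2/T$ (the last step since $T>2$), whence $B<C_0|\lambda|^3 x^{-1}\cdot\tfrac2T=|\lambda|$. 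For $A$ I would compare the sum with an integral, $\sum_{\frac1x<m\le T}m^{-1}\le\log\bigl(\lfloor T\rfloor/\lfloor\tfrac1x\rfloor\bigr)$, and then invoke the elementary inequality $\lfloor\tfrac1x\rfloor>\tfrac1{2x}$ (valid for $0<x\le1$: writing $\tfrac1x=n+\theta$ with $n:=\lfloor\tfrac1x\rfloor\ge1$ and $\theta\in[0,1)$, one has $n>\theta$) to get $\sum_{\frac1x<m\le T}m^{-1}<\log(2Tx)=\log\bigl(4C_0|\lambda|^2\bigr)=2\log|\lambda|+\log(4C_0)$; hence $A<|\lambda|\log|\lambda|+\tfrac12|\lambda|\log(4C_0)$.

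Adding the two pieces gives $\bigl|\sum_{m>1/x}\phi(y_m)/m\bigr|<|\lambda|\bigl(\log|\lambda|+1+\tfrac12\log(4C_0)\bigr)$, and since $C_0<\tfrac12$ forces $4C_0<2<e$, so $\log(4C_0)<1$, the right-hand side is strictly below $|\lambda|\bigl(\tfrac32+\log|\lambda|\bigr)$, as claimed. The only steps needing real care are the two sum--integral comparisons over the relevant ranges and, above all, the bound $\lfloor1/x\rfloor>1/(2x)$: it is precisely this that prevents an unwanted $\log(1/x)$ from creeping into $A$ (so the logarithm sees only $\log|\lambda|^2$), and so it is the genuine crux, elementary though it is. I would not route the argument through \eqref{DerivativeFormula1} --- which rewrites the sum as $\Phi_1-\lambda^{-1}x^2\phi'(x)$ and would even explain the constant $\tfrac32$ via Lemma~4.4 --- because no bound on $\phi'$ is yet available, so that approach is circular; indeed this lemma is presumably an ingredient for the later bounds on $\phi'$.
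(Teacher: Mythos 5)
Your proof is correct and takes essentially the same route as the paper's: both split the sum at a threshold of order $\lambda^2/x$, apply \eqref{PhiBoundedUniformly} on the near range and Theorem~3.2 on the far range, and compare each piece to an integral. The paper sets $\Delta = (4C_0\lambda^2)^{-1}$ (so its split point $1/(\Delta x)$ is $4C_0\lambda^2/x$, twice your $T$) and handles the leading summand by pulling out an explicit ``$+x$'' term rather than your $\lfloor 1/x\rfloor > 1/(2x)$ device, but the bookkeeping lands on exactly the same final inequality.
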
 
\begin{proof} 
We begin similarly to the proof of Lemma~4.4, but now put instead 
$\Delta := (4C_0)^{-1} \lambda^{-2}$, so that 
$0<\Delta < \frac{3}{4} \lambda^{-2} \leq \frac{3}{16} < 1$. 
Let $0<x\leq 1$. By \eqref{PhiBoundedUniformly}  and Theorem~3.2, one has: 
\begin{align*} 
\sum_{m>\frac{1}{x}} \frac{\left| \phi\left(\frac{1}{mx}\right)\right| }{m}
 &\leq \sum_{\frac{1}{x} < m \leq \frac{1}{\Delta x}} \frac{|\lambda|}{2 m} + 
 \sum_{m>\frac{1}{\Delta x}} \frac{C_0 |\lambda|^3}{m^2 x} \\ 
 &< {\textstyle\frac{1}{2}} |\lambda| \left( \int_{\frac{1}{x}}^{\frac{1}{\Delta x}} \frac{dy}{y} + x\right) + 
C_0 |\lambda|^3 x^{-1} \left( \int_{\frac{1}{\Delta x}}^{\infty} \frac{dy}{y^2} + (\Delta x)^2\right) \\ 
 &= {\textstyle\frac{1}{2}} |\lambda| \left( x + \log\left(\frac{1}{\Delta}\right)\right) 
 + C_0 |\lambda|^3 x^{-1}\left( \Delta x + (\Delta x)^2\right) \\ 
 &< {\textstyle\frac{1}{2}} |\lambda| \left( 1 + \log\left(\frac{1}{\Delta}\right)\right) 
 + 2 C_0 |\lambda|^3 \Delta = 
{\textstyle\frac{1}{2}} |\lambda| \left( 2 + \log\left(4 C_0 \lambda^2\right)\right) .
\end{align*} 
Since $4 C_0 < 2 < e$, the desired bound follows.
\end{proof} 
\begin{theorem} Let $0<x\leq 1$. If $\phi'(x)$ exists, then it satisfies 
\begin{equation*} 
\left| \phi'(x)\right| < \frac{\left( 3 + \log |\lambda |\right) \lambda^2}{x^2} . 
\end{equation*} 
\end{theorem}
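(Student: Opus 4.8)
The plan is to read the claimed inequality straight off the derivative formula \eqref{DerivativeFormula1} of Theorem~4.1, feeding in the two bounds proved in Lemmas~4.4 and~4.5. First I would check that, at \emph{every} point $x\in(0,1]$ at which $\phi'(x)$ exists, one has the identity
\[
\lambda^{-1} x^2 \phi'(x) = \Phi_1 - \sum_{m>\frac{1}{x}} \frac{\phi\left(\frac{1}{mx}\right)}{m}\,.
\]
For $x\in(0,1)$ with $1/x\notin{\mathbb N}$ this is exactly \eqref{DerivativeFormula1}. For $x=1$ it follows from \eqref{DphiAt1}, since there $x^2=1$ and $\sum_{m>1}\phi\bigl(\tfrac{1}{m\cdot 1}\bigr)/m=\sum_{m=2}^{\infty}\phi(1/m)/m$. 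Finally, if $x=1/n$ with $n\geq 2$, then by \eqref{DphiJump} the one-sided derivatives at $1/n$ coincide only when $\phi(1)=0$, so $\phi'(1/n)$ can exist only in that case, and Corollary~4.3 then tells us that \eqref{DerivativeFormula1} holds for \emph{all} $x\in(0,1]$, the point $x=1/n$ included. Hence the displayed identity is available wherever $\phi'$ is defined.

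With the identity in hand the rest is immediate: by the triangle inequality together with Lemma~4.4 (giving $|\Phi_1|<\tfrac32|\lambda|$) and Lemma~4.5 (giving $\bigl|\sum_{m>1/x}\phi(\tfrac{1}{mx})/m\bigr|<(\tfrac32+\log|\lambda|)|\lambda|$), one gets
\[
\bigl|\lambda^{-1} x^2 \phi'(x)\bigr| \;\leq\; |\Phi_1| + \Biggl|\sum_{m>\frac{1}{x}}\frac{\phi\left(\frac{1}{mx}\right)}{m}\Biggr| \;<\; {\textstyle\frac32}|\lambda| + \bigl({\textstyle\frac32}+\log|\lambda|\bigr)|\lambda| \;=\; \bigl(3+\log|\lambda|\bigr)|\lambda|\,,
\]
and multiplying through by $|\lambda|\,x^{-2}>0$ yields $|\phi'(x)|<(3+\log|\lambda|)\lambda^2/x^{2}$ (using $|\lambda|^2=\lambda^2$), which is the assertion.

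I expect essentially no obstacle here: the only point needing care is the enumeration of where $\phi'$ can exist, and this has already been settled by Theorem~4.1 and Corollary~4.3, so the verification in the first step is purely bookkeeping. (If one wished to avoid the discussion of the points $1/n$ entirely, one could instead bound $|\phi'|$ on each open interval $((n+1)^{-1},n^{-1})$, where \eqref{DerivativeFormula1} holds without restriction, and then pass to the endpoints via the one-sided limits \eqref{LefthandDphi}, \eqref{RighthandDphi} and Corollary~4.3; but invoking \eqref{DerivativeFormula1} directly at every admissible $x$ is the shorter route.)
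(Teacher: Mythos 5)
Your proposal is correct and follows essentially the same route as the paper: the paper's proof of Theorem~4.6 likewise invokes Theorem~4.1 and Corollary~4.3 to justify applying \eqref{DerivativeFormula1} at every $x$ where $\phi'(x)$ exists, and then combines Lemmas~4.4 and~4.5 via the triangle inequality. Your case analysis of the points $x=1$ and $x=1/n$ simply spells out what the paper compresses into the phrase ``by Corollary~4.3 and Theorem~4.1''.
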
 
\begin{proof} 
Suppose that $\phi'(x)$ exists. Then, by Corollary~4.3 and Theorem~4.1, it follows 
that $\phi'(x)$ is given by the equation \eqref{DerivativeFormula1}. 
By \eqref{DerivativeFormula1} and Lemmas~4.4 and~4.5, it follows that one has    
$|\lambda^{-1} x^2 \phi'(x)| 
\leq \frac{3}{2} |\lambda| + \left( \frac{3}{2} + \log |\lambda|\right) |\lambda|$. 
\end{proof} 
\begin{lemma} 
Let $0<x\leq 1$, and let $C_0$ be the positive constant given by \eqref{DefC0}. 
Suppose that $\phi'(x)$ exists, and that $0<\Delta <1$. 
Then one has 
\begin{equation*} 
\lambda^{-1} x \phi'(x) = 
\int_{\Delta}^1 y\phi(y) dK(x,y) + E_1 , 
\end{equation*} 
for some real number $E_1=E_1(\phi ;x,\Delta)$ that satisfies:  
\begin{equation*} 
\left| E_1\right| \leq \frac{3 C_0 |\lambda|^3 \Delta}{x} . 
\end{equation*} 
\end{lemma}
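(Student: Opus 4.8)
The plan is to set $E_1 := \lambda^{-1} x \phi'(x) - \int_{\Delta}^{1} y\,\phi(y)\,dK(x,y)$, so that the displayed identity holds by construction, and then to check that this Riemann--Stieltjes integral exists as a real number and that $E_1$ obeys the stated bound. Since $\phi'(x)$ is assumed to exist, Corollary~4.3 together with Theorem~4.1 shows that $\phi'(x)$ is given by \eqref{DerivativeFormula1}, so dividing that formula through by $x$ yields
\[
\lambda^{-1} x\,\phi'(x) \;=\; \frac{\Phi_1}{x} \;-\; \frac{1}{x}\sum_{m>1/x}\frac{\phi\!\left(\tfrac{1}{mx}\right)}{m}\,,
\]
where $\Phi_1$ is the finite number of \eqref{DefPhi1}.

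Next I would evaluate the Stieltjes integral. For fixed $x$, writing $K(x,y)=\tfrac12-(xy)^{-1}+\lfloor (xy)^{-1}\rfloor$ exhibits $y\mapsto K(x,y)$ on $[\Delta,1]$ as the sum of an absolutely continuous function and a step function whose jumps occur only at the points $y_m:=1/(mx)$ ($m\in{\mathbb N}$), of which only finitely many lie in $[\Delta,1]$; hence $K(x,\cdot)$ is of bounded variation on $[\Delta,1]$, and since $y\mapsto y\,\phi(y)$ is continuous there (Theorem~2.10), the integral exists. A short inspection of the fractional part in \eqref{DefK} shows that off the set $\{y_m:m\in{\mathbb N}\}$ one has $\partial_y K(x,y)=1/(xy^2)$, while at each interior point $y_m\in(\Delta,1)$ the integrator jumps by $K(x,y_m^+)-K(x,y_m^-)=-\tfrac12-\tfrac12=-1$. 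Decomposing $dK(x,\cdot)$ into its absolutely continuous and jump parts therefore gives
\[
\int_{\Delta}^{1} y\,\phi(y)\,dK(x,y) \;=\; \frac{1}{x}\int_{\Delta}^{1}\frac{\phi(y)}{y}\,dy \;-\; \frac{1}{x}\sum_{1/x<m<1/(\Delta x)}\frac{\phi\!\left(\tfrac{1}{mx}\right)}{m}\,,
\]
any jump of the integrator at an endpoint $y=\Delta$ or $y=1$ (possible only when $\Delta$ or $1/x$ is a reciprocal of an integer) merely moving one summand between this finite sum and the tail sum below, and being dominated by the same estimate.

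Subtracting the second display from the first, and using $\int_{\Delta}^{1}y^{-1}\phi(y)\,dy=\Phi_1-\int_{0}^{\Delta}y^{-1}\phi(y)\,dy$, the $\Phi_1$-terms and the finitely many common summands cancel, leaving
\[
E_1 \;=\; \frac{1}{x}\int_{0}^{\Delta}\frac{\phi(y)}{y}\,dy \;-\; \frac{1}{x}\sum_{m\geq 1/(\Delta x)}\frac{\phi\!\left(\tfrac{1}{mx}\right)}{m}\,.
\]
By Theorem~3.2 one has $|\phi(y)/y|\leq C_0|\lambda|^{3}$ for $0<y\leq 1$, so the first term has modulus at most $C_0|\lambda|^{3}\Delta/x$; likewise $|\phi(1/(mx))|\leq C_0|\lambda|^{3}/(mx)$, so the second term has modulus at most $(C_0|\lambda|^{3}/x^{2})\sum_{m\geq 1/(\Delta x)}m^{-2}$, which, by the elementary estimate $\sum_{m\geq L}m^{-2}\leq 2/L$ (valid for $L>1$, and here $1/(\Delta x)>1$ because $0<\Delta<1$ and $0<x\leq 1$), is at most $2C_0|\lambda|^{3}\Delta/x$. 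Adding the two bounds gives $|E_1|\leq 3C_0|\lambda|^{3}\Delta/x$, as required.

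The step demanding the most care is the second one: establishing the precise jump structure of $y\mapsto K(x,y)$ and rigorously justifying the absolutely-continuous-plus-jump decomposition of $\int_{\Delta}^{1}y\,\phi(y)\,dK(x,y)$, including the endpoint bookkeeping in the cases where $\Delta$, $1$, or $1/x$ is the reciprocal of an integer. The remaining work — the cancellation producing the closed form for $E_1$, and the two elementary estimates drawn from Theorem~3.2 — is routine.
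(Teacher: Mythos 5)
Your proof is correct and follows essentially the same route as the paper's: you compute the Riemann--Stieltjes integral $\int_\Delta^1 y\phi(y)\,dK(x,y)$ by splitting $dK$ into its absolutely continuous part $y^{-2}(x)^{-1}\,dy$ and its jump part (jumps of $-1$ at the points $1/(mx)$), then invoke \eqref{DerivativeFormula1} and \eqref{DefPhi1} to reduce $E_1$ to a sub-$\Delta$ integral plus a tail sum, and finally bound each using Theorem~3.2 — the same two ingredients the paper lifts from the proofs of Lemmas~4.4 and~4.5. The one small imprecision is the strict inequality $m<1/(\Delta x)$ in your Stieltjes formula: since $y\mapsto\lfloor 1/(xy)\rfloor$ is left-continuous, a jump at the left endpoint $y=\Delta$ \emph{does} contribute, so the correct range is $1/x<m\leq 1/(\Delta x)$ (as the paper has it); you acknowledge that the borderline term just migrates to the tail sum, and indeed the estimate $\sum_{m\geq L}m^{-2}\leq 2/L$ absorbs it either way, so the bound is unaffected. (Also, a small slip of the tongue: the endpoint $y=1$ is a jump point when $1/x$ is an integer, i.e.\ when $x$ — not $1/x$ — is the reciprocal of an integer.)
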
 
\begin{proof} 
Using the definition of $K(x,y)$, given in \eqref{DefK}, 
we obtain the following reformulation of the above Riemann-Stieltjes integral:  
\begin{align}\label{ProofF1} 
\int_{\Delta}^1 y \phi(y) dK(x,y) 
 &= \int_{\Delta}^{1} y\phi(y) d\left( \left\lfloor\frac{1}{xy}\right\rfloor - \frac{1}{xy}\right) \nonumber\\ 
 &= \int_{\Delta}^{1} y\phi(y) d\left\lfloor\frac{1}{xy}\right\rfloor - 
 \int_{\Delta}^{1} y\phi(y) d\left( \frac{1}{xy}\right) \nonumber\\
 &=\sum_{\frac{1}{x} < m\leq \frac{1}{\Delta x}} (-1)\left( \frac{1}{xm}\right) \phi\left( \frac{1}{xm}\right)  - 
 \int_{\Delta}^1 y\phi(y)\left( - \frac{1}{xy^2}\right) dy \nonumber\\ 
 &=\frac{1}{x} \cdot \Biggl(\int_{\Delta}^1 \frac{\phi(y) dy}{y} - 
  \sum_{\frac{1}{x} < m\leq \frac{1}{\Delta x}} \frac{\phi\left( \frac{1}{xm}\right)}{m} \Biggr) \in{\mathbb R} 
\end{align}
(note that Theorem~2.10 justifies all of these steps, since it implies 
that the integrands $y\phi(y)$ and $y^{-1}\phi(y)$ are continuous on $[\Delta , 1]$). 
Here (as in the proof of Theorem~4.6) we may apply \eqref{DerivativeFormula1}: using that 
result, and also \eqref{DefPhi1}, we deduce from \eqref{ProofF1} that one has 
\begin{equation*} 
\int_{\Delta}^1 y \phi(y) dK(x,y) - \lambda^{-1} x \phi'(x) =  E_1 , 
\end{equation*} 
where  
\begin{align*} 
{\mathbb R}\ni E_1=E_1(\phi ; x, \Delta) &:= \frac{1}{x} \cdot \Biggl(\int_{\Delta}^1 \frac{\phi(y) dy}{y} - \Phi_1 + 
  \sum_{m > \frac{1}{\Delta x}} \frac{\phi\left( \frac{1}{xm}\right)}{m} \Biggr) \\ 
 &= \frac{1}{x} \cdot \Biggl( \sum_{m > \frac{1}{\Delta x}} \frac{\phi\left( \frac{1}{xm}\right)}{m} - 
 \int_0^{\Delta} \frac{\phi(y) dy}{y} \Biggr) . 
\end{align*} 
As seen earlier (in the proofs of Lemmas~4.4 and~4.5), we have here both 
\begin{equation*} 
\Biggl| \int_0^{\Delta} \frac{\phi(y) dy}{y} \Biggr| \leq C_0 |\lambda |^3 \Delta 
\quad\text{and}\quad 
\Biggl| \sum_{m > \frac{1}{\Delta x}} \frac{\phi\left( \frac{1}{xm}\right)}{m} \Biggr| < 
2C_0 |\lambda |^3 \Delta , 
\end{equation*} 
and so may deduce the desired upper bound on $|E_1|$. 
\end{proof} 
\begin{remarks} 
The kernel $K(x,y)$ is, by \eqref{DefK}, a function on $[0,1]\times[0,1]$ of 
the form $(x,y)\mapsto f(xy)$, where $f(t)$ is a certain real-valued 
function on $[0,1]$ that has discontinuities at the points 
$\frac{1}{2},\frac{1}{3},\frac{1}{4},\ldots $ , and at the point $t=0$. 
If, instead of \eqref{DefK}, we had $K(x,y)=g(xy)$ ($0\leq x,y\leq 1$), where 
$g(t)$ was some real-valued function that was continuously differentiable on $[0,1]$, 
then it could be argued that \eqref{DefEigenfunction} would imply  that 
\begin{align*} 
\lambda^{-1} \phi'(x) 
& = \frac{d}{dx} \int_0^1 g(xy) \phi(y) dy \\ 
&= \int_0^1 \phi(y) \left( \frac{\partial}{\partial x} \left( g(xy)\right)\right) dy \\ 
&= \int_0^1 \phi(y) y g'(xy)  dy \\ 
&= \int_0^1 \phi(y) y x^{-1} \left( \frac{\partial}{\partial z} g(xz)\right)\biggr|_{z=y}  dy  
= \frac{1}{x} \int_0^1 y \phi(y) dg(xy) , 
\end{align*} 
and so we could conclude that $\lambda^{-1} x\phi'(x) = \int_0^1 y \phi(y) dK(x,y)$ for $0<x\leq 1$. 
As things stand (i.e. with $K$ as defined in \eqref{DefK}), the 
above argument lacks validity: yet Lemma~4.7 does get us to within 
touching distance of the same conclusion, since it implies that  
whenever $\phi'(x)$ exists one has 
\begin{equation*} 
\lambda^{-1} x\phi'(x) = \lim_{\Delta\rightarrow 0+} \int_{\Delta}^1 y \phi(y) dK(x,y) .  
\end{equation*} 
\end{remarks} 
\begin{lemma} 
Let $0<x\leq 1$, and let $C_0$ be the positive constant given by \eqref{DefC0}. 
Suppose that $\phi'(x)$ exists, and that $0<\Delta <1$. 
Then one has 
\begin{equation*} 
\lambda^{-1} x \phi'(x) = 
\phi(1) K(x,1) - \lambda^{-1} \phi(x) - \int_{\Delta}^1 K(x,y) y \phi'(y) dy + E_2 ,
\end{equation*} 
for some real number $E_2=E_2(\phi ;x,\Delta)$ that satisfies:  
\begin{equation*} 
\left| E_2\right| \leq \frac{4 C_0 |\lambda|^3 \Delta}{x} . 
\end{equation*} 
\end{lemma}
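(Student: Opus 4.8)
The plan is to take the Riemann-Stieltjes identity supplied by Lemma~4.7, namely $\lambda^{-1} x \phi'(x) = \int_{\Delta}^1 y\phi(y)\,dK(x,y) + E_1$ with $|E_1| \le 3 C_0 |\lambda|^3 \Delta/x$, and to integrate that Stieltjes integral by parts. This is precisely the rigorous, $\Delta$-truncated version of the heuristic recorded in the Remarks following Lemma~4.7: had $K(x,y)$ been $g(xy)$ with $g$ smooth one would have $\lambda^{-1}x\phi'(x)=\int_0^1 y\phi(y)\,dK(x,y)$ and then, after one integration by parts, $\lambda^{-1}x\phi'(x)=\phi(1)K(x,1)-\lambda^{-1}\phi(x)-\int_0^1 K(x,y)y\phi'(y)\,dy$.

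First I would record that on $[\Delta,1]$ the factor $y\mapsto y\phi(y)$ is continuous (Theorem~2.10), while $y\mapsto K(x,y)$ is of bounded variation there: its only discontinuities are the finitely many downward unit jumps at the points $y=1/(mx)$ with $1/x\le m\le 1/(\Delta x)$, and between consecutive such points it is a sum of two monotone functions. Hence $\int_{\Delta}^1 y\phi(y)\,dK(x,y)$ exists, and the Riemann-Stieltjes integration-by-parts formula \cite[Theorem~7.6]{Apostol 1974} gives
\[
\int_{\Delta}^1 y\phi(y)\,dK(x,y)=\phi(1)K(x,1)-\Delta\phi(\Delta)K(x,\Delta)-\int_{\Delta}^1 K(x,y)\,d\bigl(y\phi(y)\bigr).
\]
Next I would argue that $y\mapsto y\phi(y)$ is absolutely continuous on $[\Delta,1]$: by Corollary~4.2 it is continuously differentiable on each of the finitely many closed intervals $[1/(n+1),1/n]$ (together with $[1/2,1]$) whose union covers $[\Delta,1]$, with $\phi'$ bounded on each (Theorem~4.6), and absolute continuity is preserved under concatenation. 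Consequently $d\bigl(y\phi(y)\bigr)=(\phi(y)+y\phi'(y))\,dy$, so the last Stieltjes integral is the Lebesgue integral $\int_{\Delta}^1 K(x,y)\phi(y)\,dy+\int_{\Delta}^1 K(x,y)y\phi'(y)\,dy$. Rewriting $\int_{\Delta}^1 K(x,y)\phi(y)\,dy=\lambda^{-1}\phi(x)-\int_0^{\Delta}K(x,y)\phi(y)\,dy$ by means of \eqref{DefEigenfunction} and substituting all of this back into Lemma~4.7, I obtain the asserted identity with
\[
E_2=E_1-\Delta\phi(\Delta)K(x,\Delta)+\int_0^{\Delta}K(x,y)\phi(y)\,dy.
\]

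It then remains only to estimate $E_2$ by the triangle inequality. Lemma~4.7 gives $|E_1|\le 3C_0|\lambda|^3\Delta/x$; since $|K|\le\tfrac12$ and Theorem~3.2 gives $|\phi(\Delta)|\le C_0|\lambda|^3\Delta$ and $|\phi(y)|\le C_0|\lambda|^3 y$, we get $|\Delta\phi(\Delta)K(x,\Delta)|\le\tfrac12 C_0|\lambda|^3\Delta^2$ and $\bigl|\int_0^{\Delta}K(x,y)\phi(y)\,dy\bigr|\le\tfrac12 C_0|\lambda|^3\int_0^{\Delta}y\,dy=\tfrac14 C_0|\lambda|^3\Delta^2$. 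Because $0<\Delta<1$ and $0<x\le1$ force $\Delta^2<\Delta\le\Delta/x$, these combine to $|E_2|\le\bigl(3+\tfrac12+\tfrac14\bigr)C_0|\lambda|^3\Delta/x<4C_0|\lambda|^3\Delta/x$, which is the required bound.

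I do not expect a genuine obstacle here: the only point demanding care is the justification of the Stieltjes integration by parts together with the identity $d(y\phi(y))=(\phi(y)+y\phi'(y))\,dy$ --- that is, the bookkeeping around the unit jumps of $K(x,\cdot)$ and the finitely many reciprocal points of $(0,1]$ at which $\phi'$ might fail to exist --- and this is dispatched entirely by Theorem~2.10, Corollary~4.2 and Theorem~4.6. Everything else is a short computation, closely parallel to the proof of Lemma~4.7.
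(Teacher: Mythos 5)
Your proposal is correct and takes essentially the same route as the paper's proof: integrate the Stieltjes integral from Lemma~4.7 by parts, convert $\int_\Delta^1 K(x,y)\,d(y\phi(y))$ to a Lebesgue integral by piecing together the intervals $[1/(n+1),1/n]$ on which Corollary~4.2 applies, use \eqref{DefEigenfunction} to trade $\int_\Delta^1 K(x,y)\phi(y)\,dy$ for $\lambda^{-1}\phi(x)$ plus a tail, and bound the three error terms to get $3+\tfrac12+\tfrac14<4$. The only cosmetic difference is that you phrase the Stieltjes-to-Lebesgue step via absolute continuity by concatenation, whereas the paper writes out the explicit subinterval decomposition $[\Delta,1/M]\cup\bigcup_{1<m\le M}[1/m,1/(m-1)]$; the content is the same.
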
 
\begin{proof} 
The hypotheses permit the application of Lemma~4.7: 
by applying integration by parts \cite[Theorem~7.6]{Apostol 1974} to the 
integral that occurs in that lemma, we find that one has 
\begin{equation}\label{ProofG1} 
\lambda^{-1} x \phi'(x) = 
1 \phi(1) K(x,1) - \Delta \phi(\Delta) K(x,\Delta ) - \int_{\Delta}^1 K(x,y) d\left( y \phi(y)\right) + E_1 , 
\end{equation} 
where $E_1$ is as stated in Lemma~4.7. For any given $M\in{\mathbb N}$, 
one has here 
\begin{multline}\label{ProofG2} 
\int_{\Delta}^1 K(x,y) d\left( y \phi(y)\right) \\ 
= \int_{\Delta}^{\frac{1}{M}} K(x,y) d\left( y \phi(y)\right) 
+ \sum_{1<m\leq M} \int_{\frac{1}{m}}^{\frac{1}{m-1}} K(x,y) d\left( y \phi(y)\right) . 
\end{multline} 
We choose to apply this in the case where $M=\lceil 1/\Delta \rceil -1$. 
In this case we have $M<1/\Delta \leq M+1$, so that $\frac{1}{M+1} \leq \Delta < \frac{1}{M}$:  
note also that, since $M\in{\mathbb Z}$ and $M+1\geq 1/\Delta > 1$, we do indeed have $M\in{\mathbb N}$. 
It follows that every range of integration occurring on the right hand side of \eqref{ProofG2} 
is a non-empty subinterval of some interval in the sequence 
$\left[\frac12 , 1\right] , \left[\frac13 , \frac12\right] , \left[\frac14 , \frac13\right] , \,\ldots\,$~: 
we have, in particular, $\bigl[ \frac{1}{M+1} , \frac{1}{M}\bigr] \supseteq \bigl[ \Delta , \frac{1}{M}\bigr] \neq \emptyset$. Thus, by virtue of Corollary~4.2, it can be  
deduced from \eqref{ProofG2} that one has  
\begin{align*}  
\int_{\Delta}^1 K(x,y) d\left( y\phi(y) \right) 
 &= \int_{\Delta}^{\frac{1}{M}} K(x,y) \left( \phi(y) + y \phi'(y) \right) dy  \\ 
 &\phantom{{=}} + \sum_{1<m\leq M} \int_{\frac{1}{m}}^{\frac{1}{m-1}} K(x,y) \left( \phi(y) + y \phi'(y) \right) dy  \\ 
 &=\int_{\Delta}^1 K(x,y) \phi(y) dy + \int_{\Delta}^1 K(x,y) y \phi'(y) dy . 
\end{align*} 
By this, together with \eqref{ProofG1} and \eqref{DefEigenfunction}, we find that the 
equality stated in the lemma is satisfied when one has  
\begin{equation}\label{ProofG3} 
E_2 = - \Delta \phi(\Delta) K(x,\Delta ) + \int_0^{\Delta} K(x,y) \phi(y) dy + E_1 . 
\end{equation} 
By Lemma~4.7, \eqref{DefK} and Theorem~3.2, 
we have here:  
\begin{equation*} 
|E_1|\leq 3C_0 |\lambda|^3 \Delta x^{-1} , 
\quad 
\left| \Delta \phi(\Delta) K(x,\Delta )\right| \leq {\textstyle\frac{1}{2}} \Delta \cdot C_0 |\lambda|^3 \Delta 
= {\textstyle\frac{1}{2}} C_0 |\lambda|^3 \Delta^2 
\end{equation*} 
and 
\begin{equation*} 
\int_0^{\Delta} \left| K(x,y) \phi(y) \right| dy \leq 
{\textstyle\frac{1}{2}} C_0 |\lambda|^3 \int_0^{\Delta} y dy = {\textstyle\frac{1}{4}} C_0 |\lambda|^3\Delta^2 . 
\end{equation*} 
Since we have also $\Delta < 1\leq x^{-1}$, the desired upper bound for $|E_2|$ 
follows from the last three bounds above and \eqref{ProofG3}. 
\end{proof} 
\begin{theorem} 
For all $x\in(0,1]$ such that $\phi'(x)$ exists, one has:
\begin{equation*} 
x \phi'(x) = O\left( |\lambda |^3 \log^2 |\lambda | \right) , 
\end{equation*} 
where the implicit constant is absolute. 
\end{theorem}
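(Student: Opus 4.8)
The plan is a two–stage bootstrap on the single quantity
\[
B:=\sup\bigl\{\,|w\phi'(w)|:0<w\leq1,\ \phi'(w)\text{ exists}\,\bigr\}.
\]
The starting point is Lemma~4.9 with the auxiliary parameter $\Delta$ sent to $0$: for a fixed $x\in(0,1]$ at which $\phi'(x)$ exists, the estimate $|E_2|\leq 4C_0|\lambda|^3\Delta/x$ (with $C_0$ as in \eqref{DefC0}) forces $E_2\to0$ as $\Delta\to0+$, and, since the left–hand side of the identity in Lemma~4.9 does not depend on $\Delta$, the improper integral $\int_0^1K(x,y)y\phi'(y)\,dy:=\lim_{\Delta\to0+}\int_\Delta^1K(x,y)y\phi'(y)\,dy$ exists and satisfies
\[
\lambda^{-1}x\phi'(x)=\phi(1)K(x,1)-\lambda^{-1}\phi(x)-\int_0^1K(x,y)y\phi'(y)\,dy .
\]
I would split this integral at the height $1/|\lambda|$ (legitimate because $|\lambda|>2$, by \eqref{SpectrumBound}). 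On $[1/|\lambda|,1]$ I would use Theorem~4.6, giving $|y\phi'(y)|<(3+\log|\lambda|)|\lambda|^2/y$, together with $|K|<\tfrac12$; this part contributes at most $\tfrac12(3+\log|\lambda|)|\lambda|^2\log|\lambda|$, and it is this term that ultimately produces the $\log^2|\lambda|$. On $[0,1/|\lambda|]$ I would use only the trivial bound $|y\phi'(y)|\leq B$ (valid a.e.), contributing at most $B/(2|\lambda|)$; the key feature is that, after multiplying the displayed identity through by $|\lambda|$, this self–referential contribution becomes $\tfrac12B$, with coefficient $\tfrac12$ strictly below $1$. Taking the supremum over admissible $x$ would then yield $B\leq\tfrac14|\lambda|^2+\tfrac12|\lambda|+\tfrac12B+\tfrac12(3+\log|\lambda|)|\lambda|^3\log|\lambda|$, which rearranges (provided $B<\infty$) to $B\leq\tfrac12|\lambda|^2+|\lambda|+(3+\log|\lambda|)|\lambda|^3\log|\lambda|=O(|\lambda|^3\log^2|\lambda|)$ with absolute implied constant, using $|\lambda|>2$ to absorb the lower–order terms. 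This is precisely the assertion, so everything reduces to the finiteness of $B$.

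That finiteness is the first stage, obtained from the same computation but with $\Delta$ held fixed instead of sent to $0$. For $\delta\in(0,1/|\lambda|)$ put $B_\delta:=\sup\{|x\phi'(x)|:\delta\leq x\leq1,\ \phi'(x)\text{ exists}\}$. By Theorem~4.1 and Corollary~4.2 the function $\phi'$ is continuous on each of the finitely many closed intervals $[(n+1)^{-1},n^{-1}]$ meeting $[\delta,1]$, with finite one–sided limits at their endpoints, so $B_\delta<\infty$. Applying Lemma~4.9 with $\Delta=\delta$ to each $x\in[\delta,1]$ (here $|E_2|\leq 4C_0|\lambda|^3$, since $x\geq\delta$) and splitting $\int_\delta^1$ at $1/|\lambda|$ exactly as above—now with $B_\delta$ in place of $B$ on $[\delta,1/|\lambda|]$, and Theorem~4.6 on $[1/|\lambda|,1]$—one gets
\[
B_\delta\leq\tfrac14|\lambda|^2+\tfrac12|\lambda|+\tfrac12B_\delta+\tfrac12(3+\log|\lambda|)|\lambda|^3\log|\lambda|+4C_0|\lambda|^4 .
\]
Since $B_\delta<\infty$, this forces $B_\delta\leq\tfrac12|\lambda|^2+|\lambda|+(3+\log|\lambda|)|\lambda|^3\log|\lambda|+8C_0|\lambda|^4=O(|\lambda|^4)$, uniformly in $\delta$; as $B=\lim_{\delta\to0+}B_\delta$ (the sets over which the supremum is taken increase as $\delta\downarrow0$), we conclude $B<\infty$—indeed $B=O(|\lambda|^4)$—which is all that the second stage requires.

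I do not expect any genuine analytic obstacle beyond routine bookkeeping: justifying the improper–integral limits (immediate from the $\Delta\to0+$ behaviour of $E_2$ in Lemma~4.9), verifying $B_\delta<\infty$ from the piecewise continuous differentiability of $\phi'$ provided by Corollary~4.2, and confirming that the numerical constant emerging from $\tfrac12|\lambda|^2+|\lambda|+(3+\log|\lambda|)|\lambda|^3\log|\lambda|$ is absolute (clear once $|\lambda|>2$ is used). The only structural choice that matters is the splitting height $1/|\lambda|$: it is exactly what makes the self–referential part of the estimate carry a coefficient $\tfrac12<1$, so that the bootstrap closes; any height $c/|\lambda|$ with $c<2$ would serve equally well, and the crude first–stage estimate (with its $|\lambda|^4$) is used only to license the division by $\tfrac12$ in the sharp second stage.
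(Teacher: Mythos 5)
Your proposal is correct (note: each of your references to ``Lemma~4.9'' should read ``Lemma~4.8''), and while it shares the key ingredients with the paper's proof --- Lemma~4.8, Theorem~4.6, and a split of the integral $\int K(x,y)\,y\phi'(y)\,dy$ at height comparable to $1/|\lambda|$ --- the mechanism by which the logarithm in $x$ is eliminated is genuinely different. The paper first derives a \emph{pointwise} intermediate estimate $|x\phi'(x)|=O\bigl((\log(1/x)+\log|\lambda|)\,|\lambda|^3\log|\lambda|\bigr)$ by applying Theorem~4.6 throughout the integral, and then re-inserts this estimate (rather than Theorem~4.6) into the small-$y$ part $[\Delta,1/|\lambda|]$; the extra $\log(1/y)$ it carries there is integrable and contributes only $O(\lambda^2\log^2|\lambda|)$, so the second pass removes the unwanted $\log(1/x)$. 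You instead work with the global supremum $B=\sup|x\phi'(x)|$, use the trivial bound $|y\phi'(y)|\leq B$ on $[0,1/|\lambda|]$, and close a self-referential inequality $B\leq\tfrac12 B+O(|\lambda|^3\log^2|\lambda|)$ --- an approach that requires, as you correctly anticipate, a preliminary finiteness argument for $B$ (your first stage, which uses the piecewise continuous differentiability from Corollary~4.2 to justify $B_\delta<\infty$ and then bootstraps $B_\delta=O(|\lambda|^4)$ uniformly in $\delta$). The paper sidesteps the finiteness issue because it never takes a supremum: at each fixed $x$ all quantities are already finite by Theorem~4.6. Your version is arguably a touch more structured (the role of the splitting height $1/|\lambda|$ as the device that makes the self-referential coefficient $\tfrac12<1$ is made transparent), while the paper's version gives a usable intermediate pointwise bound \eqref{ProofH2} for free; the final estimate and the absolute constant are of the same quality either way.
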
 
\begin{proof}
Let $0<x\leq 1$. Suppose that $\phi'(x)$ exists. 
Then, by Lemma~4.8, 
\eqref{DefK}, \eqref{PhiBoundedUniformly}, \eqref{SpectrumBound} and the triangle inequality, 
one may deduce that 
\begin{equation}\label{ProofH1} 
\left| x \phi'(x) \right| \leq 
{\textstyle\frac{1}{2}} \lambda^2  +  
{\textstyle\frac{1}{2}} |\lambda | \int_{\Delta}^1 \left| y \phi'(y)\right| dy + 
O\left( \lambda^4 x^{-1} \Delta\right) 
\end{equation} 
for $0<\Delta <1$.  
By Theorem~4.6 and \eqref{SpectrumBound}, we have here 
\begin{equation*} 
\int_{\Delta}^1 \left| y \phi'(y)\right| dy \leq 
\left( 3 + \log |\lambda|\right) \lambda^2 
\int_{\Delta}^1 \frac{dy}{y} = 
O\left(  \left( \lambda^2 \log |\lambda| \right) \log\left( \frac{1}{\Delta}\right) \right) . 
\end{equation*} 
Thus we obtain, in particular, 
\begin{equation*} 
\left| x \phi'(x) \right| \leq 
{\textstyle\frac{1}{2}} \lambda^2  +  
O\left(  \left( |\lambda |^3 \log |\lambda| \right) \log\left( \frac{1}{\Delta}\right) \right) + 
O\left( \lambda^4 x^{-1} \Delta\right)  
\end{equation*} 
when $\Delta = |\lambda |^{-1} x$ (for, by \eqref{SpectrumBound}, 
one does have $0<\Delta < 1$ in this case). 
This gives us:  
\begin{align}\label{ProofH2} 
\left| x \phi'(x) \right| &= O\left( \lambda^2 + 
\left( |\lambda |^3 \log |\lambda| \right) \log\left( |\lambda| / x\right) + |\lambda |^3 \right) \nonumber\\ 
&=O\left( \left( \log\left( \frac{1}{x}\right) + \log |\lambda| \right)
|\lambda |^3 \log |\lambda | \right) . 
\end{align} 
\par
We now repeat, with one change, the steps that led to \eqref{ProofH2}. 
The change that we make is to apply \eqref{ProofH2}, instead of 
Theorem~4.6, to that part of the integral $\int_0^1 \left| y \phi'(y) dy\right| dy$ 
where $y<|\lambda|^{-1}$: note that we still put $\Delta = |\lambda |^{-1} x$, and 
so (given that $|\lambda|\geq 2$ and $\log(1/x)\geq 0$) will have 
$1 > |\lambda|^{-1} \geq \Delta$. We find that one has 
\begin{align*} 
\int_{\Delta}^1 \left| y \phi'(y)\right| dy 
 &\leq 
O\left( \lambda^2 \log |\lambda|\right) \cdot  
\int_{\frac{1}{|\lambda|}}^1 \frac{dy}{y} + 
O\left( |\lambda |^3 \log |\lambda|\right)\cdot \int_{\Delta}^{\frac{1}{|\lambda|}} 
\log\left( \frac{1}{y}\right) dy \\ 
&\leq O\left( |\lambda|^3 \log |\lambda|\right) \cdot 
\left( \frac{\log |\lambda |}{|\lambda|} + 
\int_0^{\frac{1}{|\lambda|}} \log\left( \frac{1}{y}\right) dy \right) \\ 
 &= O\left( |\lambda|^3 \log |\lambda|\right) \cdot 
\left( \frac{1 + 2\log |\lambda |}{|\lambda|} \right) = O\left( \lambda^2 \log^2 |\lambda|\right) . 
\end{align*} 
By means of this last estimate and the case $\Delta = |\lambda |^{-1} x$ of \eqref{ProofH1}, 
one finds that the desired bound for $|x\phi'(x)|$ is obtained. 
\end{proof} 
\begin{corollary} 
Let $j\in{\mathbb N}$ and $0<y\leq x\leq 1$. Then one has both 
\begin{equation}\label{LogarithmicBoundOnIncrements} 
\left| \phi_j (x) - \phi_j (y)\right| =  
O\left( \left|\lambda_j\right|^3  \left( \log \left| \lambda_j\right|\right)^2 \cdot 
\log\left(\frac{x}{y}\right) \right) 
\end{equation} 
and 
\begin{equation}\label{ImprovesCor2.14} 
\left| \phi_j (x) - \phi_j (y)\right| \leq 
\left( 3 + \log\left| \lambda_j\right|\right) \lambda_j^2 \cdot\left(\frac{1}{y} - \frac{1}{x}\right) . 
\end{equation} 
\end{corollary}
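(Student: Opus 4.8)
The plan is to derive both inequalities by integrating pointwise bounds on $\phi_j'$ over the interval $[y,x]$, exploiting the fact --- recorded in the Remarks following Lemma~3.4 --- that each eigenfunction $\phi_j$ is absolutely continuous on every closed subinterval of $(0,1]$. Fix $j\in{\mathbb N}$ and write $\lambda=\lambda_j$, $\phi=\phi_j$. If $y=x$ both sides of each claimed inequality vanish, so I may assume $0<y<x\leq 1$. Since $[y,x]$ is a compact subinterval of $(0,1]$, the function $\phi$ is absolutely continuous there, and hence (as in \eqref{IndefiniteIntegral}) one has
\[
\phi(x)-\phi(y)=\int_y^x \phi'(t)\,dt ,
\]
where, by Theorem~4.1 and Corollary~4.3, $\phi'(t)$ is defined for every $t\in[y,x]$ apart from the finitely many points of the form $1/n$ ($n\in{\mathbb N}$) that may lie in $[y,x]$, a set of Lebesgue measure zero. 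Thus it suffices to bound $|\phi'(t)|$ at those $t\in[y,x]$ at which it exists, and then integrate.

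For \eqref{ImprovesCor2.14} I would invoke Theorem~4.6: at every $t\in(0,1]$ where $\phi'(t)$ exists one has $|\phi'(t)|<(3+\log|\lambda|)\lambda^2 t^{-2}$. Integrating this bound over $[y,x]$ gives
\[
\left|\phi(x)-\phi(y)\right|\leq\int_y^x|\phi'(t)|\,dt
<(3+\log|\lambda|)\lambda^2\int_y^x\frac{dt}{t^2}
=(3+\log|\lambda|)\lambda^2\left(\frac1y-\frac1x\right),
\]
which is exactly \eqref{ImprovesCor2.14}. For \eqref{LogarithmicBoundOnIncrements} the argument is identical in shape, but uses Theorem~4.9 in place of Theorem~4.6: at every such $t$ one has $|\phi'(t)|=O\left(|\lambda|^3(\log|\lambda|)^2 t^{-1}\right)$ with an absolute implied constant, so integration over $[y,x]$, together with $\int_y^x t^{-1}\,dt=\log(x/y)$, yields the stated estimate.

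Since the two required pointwise derivative bounds are already in hand (Theorems~4.6 and~4.9), there is essentially no hard step: the only point needing care is the justification that $\phi(x)-\phi(y)=\int_y^x\phi'(t)\,dt$ even though $\phi'$ fails to exist at the finitely many reciprocals of integers in $[y,x]$, and this is immediate from the absolute continuity of $\phi$ on compact subintervals of $(0,1]$ noted in the Remarks after Lemma~3.4. As an alternative to that appeal I could instead split $[y,x]$ at the points $1/n$ it contains, apply Corollary~4.2 (continuous differentiability of $\phi$ on each $[(n+1)^{-1},n^{-1}]$) on each piece, and sum; I would mention both routes but present the absolute-continuity one, as it is the shorter.
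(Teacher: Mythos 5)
Your proposal is correct and uses essentially the same argument as the paper: write $\phi_j(x)-\phi_j(y)=\int_y^x\phi_j'(t)\,dt$ (the paper obtains this by applying \eqref{IndefiniteIntegral} twice, which rests on the same absolute-continuity fact you cite) and then integrate the pointwise derivative bounds of Theorems~4.9 and~4.6 to get \eqref{LogarithmicBoundOnIncrements} and \eqref{ImprovesCor2.14} respectively. Your extra remarks about which points of $[y,x]$ may lack a derivative are a helpful elaboration but not a different route.
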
 
\begin{proof} 
By \eqref{IndefiniteIntegral} (applied twice), we have: 
\begin{equation}\label{ProofR1}  
\left| \phi_j (x) - \phi_j (y)\right| = \left| \int_y^x \phi_j' (z) dz\right| 
\leq \int_y^x \left| \phi_j' (z)\right| dz . 
\end{equation} 
The results \eqref{LogarithmicBoundOnIncrements} and \eqref{ImprovesCor2.14}  
follow by combining \eqref{ProofR1} with Theorems~4.9 and~4.6, respectively. 
\end{proof} 
\begin{theorem} 
The function $x\mapsto x \phi'(x)$ (defined almost everywhere in $[0,1]$) 
is both measurable and square integrable on $[0,1]$. One has 
\begin{equation*} 
\int_0^1 \left( x \phi'(x)\right)^2 dx = O\left(  |\lambda |^5 \log^3 |\lambda| \right) . 
\end{equation*} 
\end{theorem}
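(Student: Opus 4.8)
The plan is to combine the two pointwise estimates for $x\phi'(x)$ that have already been established — the bound of Theorem~4.6, which is strong for $x$ bounded away from $0$, and the uniform bound of Theorem~4.9, which is what is needed near $x=0$ — by splitting the integral $\int_0^1$ at a suitably chosen point $T$.

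First I would dispose of measurability. Recall (from the discussion preceding Theorem~4.1) that $\phi$, being of bounded variation on every closed subinterval of $(0,1]$, is differentiable almost everywhere in $[0,1]$, so that $x\mapsto x\phi'(x)$ is indeed defined almost everywhere there. By Theorem~4.1 the derivative $\phi'$ is continuous on each of the open intervals $\bigl((n+1)^{-1},n^{-1}\bigr)$ ($n\in{\mathbb N}$), and the union of these intervals is $(0,1)$ with the countable — hence Lebesgue-null — set $\{n^{-1}:n\geq 2\}$ removed. Thus $\phi'$ agrees, off a Lebesgue-null set, with a function that is continuous on an open subset of $[0,1]$, so $\phi'$ is Lebesgue measurable on $[0,1]$, and with it the product $x\mapsto x\phi'(x)$.

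Next I would record the two pointwise bounds. At every $x\in(0,1]$ at which $\phi'(x)$ exists — that is, at almost every $x$ — Theorem~4.6 gives $\bigl(x\phi'(x)\bigr)^2 < (3+\log|\lambda|)^2\lambda^4 x^{-2}$, while Theorem~4.9 gives $\bigl(x\phi'(x)\bigr)^2 = O\bigl(|\lambda|^6\log^4|\lambda|\bigr)$ with an absolute implied constant. I would then set $T:=\bigl(|\lambda|\log|\lambda|\bigr)^{-1}$; since $|\lambda|>2$ by \eqref{SpectrumBound}, one has $0<T<1$. Applying the Theorem~4.9 bound on $[0,T]$ and the Theorem~4.6 bound on $[T,1]$, and using $\int_T^1 x^{-2}\,dx<T^{-1}$, one gets
\[
\int_0^1\bigl(x\phi'(x)\bigr)^2\,dx = O\!\left(|\lambda|^6\log^4|\lambda|\cdot T\right) + O\!\left(\frac{(3+\log|\lambda|)^2\lambda^4}{T}\right).
\]
Since $T^{-1}=|\lambda|\log|\lambda|$ and $(3+\log|\lambda|)^2=O\bigl(\log^2|\lambda|\bigr)$ for $|\lambda|>2$ (because $\log|\lambda|>\log 2$, so that $3+\log|\lambda|\ll\log|\lambda|$), both of these $O$-terms are $O\bigl(|\lambda|^5\log^3|\lambda|\bigr)$. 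This gives the displayed bound of the theorem, and in particular shows $x\phi'(x)\in L^2\bigl([0,1]\bigr)$.

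The only real obstacle is the observation that Theorem~4.9 used by itself yields merely $O\bigl(|\lambda|^6\log^4|\lambda|\bigr)$, so that one must trade it against the $x^{-2}$-type bound of Theorem~4.6 near $x=0$; once $T$ is taken of order $(|\lambda|\log|\lambda|)^{-1}$ — up to a constant the value that balances the two contributions — the estimate closes with no further work. (If one wished to optimise the constant one would instead take $T=(3+\log|\lambda|)|\lambda|^{-1}\log^{-2}|\lambda|$, but this changes nothing in the order of magnitude.)
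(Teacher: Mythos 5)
Your proof is correct and follows essentially the same route as the paper's: the paper also splits the integral at $x = (|\lambda|\log|\lambda|)^{-1}$ and trades the uniform bound of Theorem~4.9 near the origin against the $x^{-2}$-type bound of Theorem~4.6 on the rest of $[0,1]$, arriving at $O(|\lambda|^5\log^3|\lambda|)$ from each half. The only (minor) difference is the measurability step: you argue that $\phi'$ is continuous on the open set $\bigcup_n\bigl((n+1)^{-1},n^{-1}\bigr)$ whose complement in $[0,1]$ is null, whereas the paper instead writes $\phi'$ as the a.e.\ limit of the sequence of continuous difference quotients $f_n(x) := (n+1)\phi\bigl(\frac{nx+1}{n+1}\bigr) - (n+1)\phi\bigl(\frac{nx}{n+1}\bigr)$ and invokes the measurability of pointwise a.e.\ limits; both arguments are standard and equally valid.
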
 
\begin{proof} 
By Corollary~4.2 (or Corollary~4.3), the set of points of the interval $[0,1]$ 
at which $\phi'(x)$ is not defined is a set that is countable, and so 
has Lebesgue measure $0$. Note that Corollary~4.2 implies also 
that $\phi'(x)$ is finite at all those points of the interval $(0,1]$ where it exists. 
We may therefore conclude that the functions $\phi'(x)$ and $x\mapsto x \phi'(x)$ 
are each defined almost everywhere in $[0,1]$, and that the latter is finite (and so real-valued) at 
all points where it is defined. 
\par
For $x\in [0,1]$ and $n\in{\mathbb N}$, put 
$f_n(x) := 
(n+1)\phi\left( \frac{nx+1}{n+1}\right) - (n+1)\phi\left( \frac{nx}{n+1}\right)$. 
By Theorem~2.10, the functions $f_1(x),f_2(x),f_3(x),\ldots $ are continuous on 
$[0,1]$, and are therefore measurable on $[0,1]$. 
Since $\phi'(x)$ is defined almost everywhere in $[0,1]$, it can be deduced  
(from the usual definition of $\phi'(x)$ as a limit) 
that we have $\lim_{n\rightarrow\infty} f_n (x) = \phi'(x)$ almost everywhere in $[0,1]$.   
We may conclude from this that, 
since all terms of the sequence $f_1(x),f_2(x),f_3(x),\ldots $ are measurable on $[0,1]$, 
so too is $\phi'(x)$: see, for example, \cite[Theorem~4.12]{Wheeden and Zygmund 2015} regarding this point. 
\par 
Since $\phi'(x)$ is measurable on $[0,1]$, so is its product with any other such 
function: the functions $x\mapsto x\phi'(x)$ and $x\mapsto \left( x\phi'(x)\right)^2$, 
in particular, are measurable on $[0,1]$. By Theorem~4.9, there exists a real number $b$ (say) 
such that one has $0\leq \left( x\phi'(x)\right)^2\leq b$ almost everywhere in $[0,1]$. 
It follows that we have $\int_0^1 \left( x\phi'(x)\right)^2 dx \leq \int_0^1 b dx = b < \infty$. 
The measurable function $x\mapsto x\phi'(x)$ is, therefore, square integrable on $[0,1]$. 
By the bounds of Theorems~4.6 and~4.9, we have also 
\begin{align*} 
\int_0^1 \left( x\phi'(x)\right)^2 dx 
 &=O\left( \lambda^6 \log^4 |\lambda|\right) \cdot\int_0^{\frac{1}{|\lambda | \log |\lambda |}} dx 
+ O\left( \lambda^4 \log^2 |\lambda|\right) \cdot\int_{\frac{1}{|\lambda | \log |\lambda |}}^1 \frac{dx}{x^2} \\ 
 &= O\left( \lambda^6 \log^4 |\lambda|\right) \cdot \left( |\lambda | \log |\lambda | \right)^{-1} 
+ O\left( \lambda^4 \log^2 |\lambda|\right) \cdot |\lambda | \log |\lambda | , 
\end{align*} 
and so we obtain the last part of the theorem. 
\end{proof} 
\begin{definitions} 
We put now:  
\begin{equation}\label{DefQ(x)} 
Q(x) := x \phi'(x) \quad \text{($0\leq x\leq 1$ and $\phi'(x)$ is defined and finite)} 
\end{equation} 
and 
\begin{equation}\label{DefP(x)} 
P(x) := -\lambda \int_0^1 K(x,y) Q(y) dy \quad \text{($0\leq x\leq 1$).} 
\end{equation}
\end{definitions}  
Note that it follows immediately from \eqref{DefQ(x)} and Theorem~4.11 that 
the function $Q(x)$ is both measurable and square integrable on $[0,1]$. 
The same is true, when $x\in [0,1]$ is given, of the 
function $y\mapsto K(x,y)$: see \eqref{HalfHilbertSchmidt}. 
It therefore follows by the Cauchy-Schwarz inequality, combined with \eqref{HSnorm} and 
theorems of Tonelli and Fubini (for which see \cite[Theorems~6.1 and~6.10]{Wheeden and Zygmund 2015}), 
that the function $P(x)$, given by \eqref{DefP(x)}, is an element 
of $L^2 \bigl( [0,1]\bigr)$. 
With this, we are able to justify our next lemma, and the further definitions that follow it. 
\begin{theorem} Let $0<x\leq 1$. Suppose that $\phi'(x)$ exists. 
Then one has 
\begin{equation*} 
x \phi'(x) + \phi(x) - \lambda\phi(1) K(x,1) = P(x) .
\end{equation*} 
\end{theorem}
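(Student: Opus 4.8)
The plan is to obtain the stated identity by applying Lemma~4.8 with its auxiliary parameter $\Delta$ held at some value in $(0,1)$, and then letting $\Delta\rightarrow 0+$.

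First I would record the relevant integrability facts. By \eqref{DefQ(x)} and Theorem~4.11, the function $Q(y)=y\phi'(y)$ is both measurable and square integrable on $[0,1]$; and, for any fixed $x\in [0,1]$, the function $y\mapsto K(x,y)$ has these same properties, by \eqref{HalfHilbertSchmidt}. Hence, by the Cauchy-Schwarz inequality, $y\mapsto K(x,y) Q(y)$ is an element of $L^1\bigl( [0,1]\bigr)$. From this (using, for example, the absolute continuity of the Lebesgue integral, or else Lebesgue's dominated convergence theorem with the dominating function $y\mapsto |K(x,y) Q(y)|$) I would deduce that
\begin{equation}\label{ProofThm413limit}
\lim_{\Delta\rightarrow 0+} \int_{\Delta}^1 K(x,y) y\phi'(y) dy = \int_0^1 K(x,y) Q(y) dy = -\lambda^{-1} P(x) ,
\end{equation}
the final equality being immediate from the definition \eqref{DefP(x)}.

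Next, fixing $x\in (0,1]$ such that $\phi'(x)$ exists, and taking any $\Delta$ with $0<\Delta<1$, I would invoke Lemma~4.8 to write
\begin{equation*}
\lambda^{-1} x\phi'(x) = \phi(1) K(x,1) - \lambda^{-1}\phi(x) - \int_{\Delta}^1 K(x,y) y\phi'(y) dy + E_2(\phi;x,\Delta) ,
\end{equation*}
where $|E_2(\phi;x,\Delta)|\leq 4 C_0 |\lambda|^3 \Delta / x$, with $C_0$ as in \eqref{DefC0}. Since $x$ and $\lambda$ do not depend on $\Delta$, this bound forces $E_2(\phi;x,\Delta)\rightarrow 0$ as $\Delta\rightarrow 0+$; so, letting $\Delta\rightarrow 0+$ in the displayed identity and appealing to \eqref{ProofThm413limit}, I would get
\begin{equation*}
\lambda^{-1} x\phi'(x) = \phi(1) K(x,1) - \lambda^{-1}\phi(x) + \lambda^{-1} P(x) .
\end{equation*}
Multiplying through by $\lambda$ and rearranging then gives $x\phi'(x) + \phi(x) - \lambda\phi(1) K(x,1) = P(x)$, as required.

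The only point calling for any care is the interchange of limit and integral in \eqref{ProofThm413limit}; but, as indicated above, this is routine once one has the integrability of $y\mapsto K(x,y) Q(y)$, which is immediate from Theorem~4.11 and \eqref{HalfHilbertSchmidt}. So I do not anticipate a genuine obstacle here: Lemma~4.8 already does essentially all the work, and this proof amounts to little more than a passage to the limit.
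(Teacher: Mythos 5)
Your proposal is correct and follows essentially the same route as the paper: both apply Lemma~4.8 for fixed $\Delta\in(0,1)$ and then let $\Delta\rightarrow 0+$, with the $E_2$ bound killing that error term. The only (inessential) difference is how the tail $\int_0^{\Delta}K(x,y)\,y\phi'(y)\,dy$ is shown to vanish: the paper uses the pointwise bound on $y\phi'(y)$ from Theorem~4.9, whereas you invoke the $L^1$-integrability of $y\mapsto K(x,y)Q(y)$ (via Cauchy--Schwarz and Theorem~4.11) together with the absolute continuity of the Lebesgue integral — both are perfectly sound.
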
 
\begin{proof} 
By Lemma~4.8 and Definitions~4.12, we find that 
\begin{multline*} 
x \phi'(x) + \phi(x) - \lambda\phi(1) K(x,1) \\ 
 = \lambda \int_0^{\Delta} K(x,y) y \phi'(y) dy 
 + P(x) + O\left( \lambda^4 x^{-1} \Delta\right)  
\end{multline*} 
for all $\Delta\in (0,1)$. The theorem will therefore follow if it can be 
shown that ${\cal E}(\Delta) := \int_0^{\Delta} K(x,y) y \phi'(y) dy$ 
satisfies ${\cal E}(\Delta)\rightarrow 0$, 
in the limit as $\Delta\rightarrow 0+$. To this end, we observe that 
it is a consequence of Theorems~4.9 and~4.11, and the definition \eqref{DefK}, that 
one has 
\begin{equation*} 
\int_0^{\Delta} | K(x,y) y \phi'(y)| dy = 
O\left( |\lambda |^3 \log^2 |\lambda| \right) \cdot \int_0^{\Delta} dy 
=O\left( \Delta |\lambda |^3 \log^2 |\lambda| \right) \end{equation*} 
for $0<\Delta < 1$. We therefore have 
$\left| {\cal E}(\Delta)\right| \leq  O\left( \Delta |\lambda |^3 \log^2 |\lambda| \right)$, 
for $0<\Delta < 1$, and so may deduce that $\lim_{\Delta\rightarrow 0+} {\cal E}(\Delta) = 0$. 
The theorem follows. 
\end{proof} 
\begin{definitions} 
For $h\in{\mathbb N}$, we put: 
\begin{equation}\label{Def-a_h}
a_h := \left\langle P , \phi_h\right\rangle = \int_0^1 P(x) \phi_h (x) dx 
\end{equation} 
and 
\begin{equation}\label{Def-b_h} 
b_h :=  \int_0^1 Q(x) \phi_h (x) dx . 
\end{equation}
\end{definitions} 
\par
Since we have already found that both 
$P(x)$ and $Q(x)$ are measurable and square integrable on $[0,1]$, 
and since the same is true of all the eigenfunctions, $\phi_1(x),\phi_2(x),\phi_3(x),\ldots $, 
it therefore follows by the Cauchy Schwarz inequality that, for each $h\in{\mathbb N}$, 
both of the functions $x\mapsto P(x)\phi_h(x)$ and $x\mapsto Q(x)\phi_h(x)$ 
are integrable on $[0,1]$. Thus the integrals occurring in \eqref{Def-a_h} and \eqref{Def-b_h} exist, 
and have finite values: so we have $a_h,b_h\in{\mathbb R}$ for all $h\in{\mathbb N}$. 
\begin{lemma} 
For $h\in{\mathbb N}$, one has 
\begin{equation*}
b_h = -\left( \frac{\lambda_h}{\lambda}\right)  a_h . 
\end{equation*} 
\end{lemma}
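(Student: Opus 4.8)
The plan is to start from the definition \eqref{Def-a_h} of $a_h$, substitute the definition \eqref{DefP(x)} of $P(x)$, interchange the order of integration, and then recognise the resulting inner integral $\int_0^1 K(x,y)\phi_h(x)\,dx$ as $\lambda_h^{-1}\phi_h(y)$ by way of the symmetry of $K$ and the eigenfunction equation \eqref{DefEigenfunction} applied to $\phi_h$. What is left is then, by \eqref{Def-b_h}, just $-(\lambda/\lambda_h)\,b_h$, and a rearrangement yields the asserted identity.

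First I would record that the interchange of the order of integration is legitimate. By \eqref{DefK} the kernel $K$ takes values in $\bigl(-\tfrac12,\tfrac12\bigr]$ and is measurable on $[0,1]\times[0,1]$; by \eqref{DefQ(x)} and Theorem~4.11 the function $Q$ is measurable and square integrable on $[0,1]$; and each eigenfunction $\phi_h$ has these same two properties. Hence $(x,y)\mapsto K(x,y)\phi_h(x)Q(y)$ is measurable on $[0,1]^2$, with absolute value dominated there by $\tfrac12|\phi_h(x)|\,|Q(y)|$. Since $[0,1]$ has finite Lebesgue measure, $\phi_h$ and $Q$ both belong to $L^1\bigl([0,1]\bigr)$, so that
\[
\int_0^1\!\!\int_0^1 \tfrac12|\phi_h(x)|\,|Q(y)|\,dx\,dy
= \tfrac12\Bigl(\int_0^1|\phi_h(x)|\,dx\Bigr)\Bigl(\int_0^1|Q(y)|\,dy\Bigr) < \infty ;
\]
by Tonelli's theorem the integrand $(x,y)\mapsto K(x,y)\phi_h(x)Q(y)$ is therefore integrable over $[0,1]^2$, and Fubini's theorem applies. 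Consequently, by \eqref{Def-a_h}, \eqref{DefP(x)} and Fubini,
\[
a_h = \int_0^1\Bigl(-\lambda\int_0^1 K(x,y)Q(y)\,dy\Bigr)\phi_h(x)\,dx
= -\lambda\int_0^1 Q(y)\Bigl(\int_0^1 K(x,y)\phi_h(x)\,dx\Bigr) dy .
\]
Since $K$ is symmetric, the innermost integral equals $\int_0^1 K(y,x)\phi_h(x)\,dx$, which by \eqref{DefEigenfunction} (with $j=h$) is $\lambda_h^{-1}\phi_h(y)$ for every $y\in[0,1]$; substituting this and then invoking \eqref{Def-b_h} gives
\[
a_h = -\frac{\lambda}{\lambda_h}\int_0^1 Q(y)\phi_h(y)\,dy = -\frac{\lambda}{\lambda_h}\,b_h ,
\]
and multiplying through by $-\lambda_h/\lambda$ produces $b_h = -(\lambda_h/\lambda)\,a_h$.

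There is no real obstacle here: the only step that needs attention is the appeal to Fubini's theorem, which is routine in view of the boundedness of $K$ and the square integrability of $Q$ and of $\phi_h$. It is worth noting that \eqref{DefEigenfunction} holds for every $x\in[0,1]$ (not merely almost everywhere), so that no exceptional-set bookkeeping is needed when that equation is applied inside the $y$-integration above.
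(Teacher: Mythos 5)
Your argument is correct and is essentially the same as the paper's: both hinge on one Fubini interchange, the symmetry of $K$, and the eigenfunction equation \eqref{DefEigenfunction}, the only cosmetic difference being that you start from $a_h$ and substitute the definition of $P$, while the paper starts from $b_h$ and substitutes the eigenfunction expansion of $\phi_h$. Your explicit Tonelli/Fubini justification is a harmless elaboration of what the paper leaves implicit.
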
 
\begin{proof} 
Let $h\in{\mathbb N}$. As noted in \cite[Sections~3.9--3.10]{Tricomi 1957}, it follows  
by \eqref{DefEigenfunction}, \eqref{DefK}, \eqref{DefP(x)} and Fubini's theorem for double integrals, 
that one has 
\begin{align*} 
\int_0^1 Q(x) \phi_h (x) dx &= 
\int_0^1 Q(x) \lambda_h \biggl(\int_0^1 K(x,y) \phi_h (y) dy\biggr) dx \\ 
 &= \lambda_h \int_0^1  \biggl(\int_0^1  K(x,y) Q(x) dx\biggr) \phi_h (y) dy \\ 
 &= \lambda_h \int_0^1  \biggl(\int_0^1 K(y,x)  Q(x) dx\biggr) \phi_h (y)dy \\ &= 
 \lambda_h \left\langle (-\lambda)^{-1} P , \phi_h \right\rangle 
 = \lambda_h  (-\lambda)^{-1} \left\langle P , \phi_h \right\rangle . 
\end{align*}  
By this and Definitions~4.14, one has $b_h = -\lambda^{-1}\lambda_h a_h$. 
\end{proof} 
\begin{theorem}[Hilbert-Schmidt] 
The series 
\begin{equation}\label{HSseries}  
a_1 \phi_1(x) + a_2 \phi_2(x) + a_3 \phi_3(x) + \ldots 
\end{equation} 
converges both absolutely and uniformly on $[0,1]$. For $0\leq x\leq 1$, one has 
\begin{equation}\label{HSthm}
\sum_{h=1}^{\infty} a_h \phi_h (x) = P(x) . 
\end{equation} 
\end{theorem}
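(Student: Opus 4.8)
The plan is to recognise $P$ as a function ``of source type'' for the kernel $K$ --- i.e.\ as the image, under the integral operator with kernel $K$, of a square-integrable function --- and then to invoke the Hilbert--Schmidt theorem exactly as it was used in the proof of Theorem~2.11, upgrading its almost-everywhere conclusion to a pointwise one by showing that $P$ is continuous.

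First I would note that, by Theorem~4.11 (see also the remark following Definitions~4.12), the function $Q$ lies in $L^2\bigl([0,1]\bigr)$, and hence so does $-\lambda Q$; also, by \eqref{DefP(x)}, one has $P(x)=\int_0^1 K(x,y)\bigl(-\lambda Q(y)\bigr)\,dy$ for $0\le x\le 1$. Since $K$ is a symmetric measurable kernel satisfying \eqref{HSnorm} and \eqref{HalfHilbertSchmidt}, it then follows, by an application \cite[Page~113]{Tricomi 1957} of the Hilbert--Schmidt theorem \cite[Page~110]{Tricomi 1957}, that the series \eqref{HSseries} --- whose coefficients $a_h=\langle P,\phi_h\rangle$ are, by \eqref{Def-a_h}, precisely the Fourier coefficients of $P$ relative to the orthonormal system $\phi_1,\phi_2,\ldots$ --- converges both absolutely and uniformly on $[0,1]$, and that its sum, $S(x)$ say, satisfies $S(x)=P(x)$ for almost every $x\in[0,1]$. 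Moreover, since each partial sum of \eqref{HSseries} is, by Theorem~2.10, continuous on $[0,1]$, and since the convergence is uniform, the function $S$ is continuous on $[0,1]$.

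It then remains to show that $P$ is continuous on $(0,1]$. Granted this, $S$ and $P$ --- agreeing almost everywhere, hence on a dense subset of $(0,1]$, and both continuous on $(0,1]$ --- must agree throughout $(0,1]$; while at $x=0$ one has $P(0)=-\lambda\int_0^1 K(0,y)Q(y)\,dy=0$ by \eqref{DefK} and, since $\phi_h(0)=0$ for every $h$ (by \eqref{DefEigenfunction} and \eqref{DefK}), also $S(0)=0$, so that $S(0)=P(0)$ as well; hence \eqref{HSthm} holds on all of $[0,1]$. To prove that $P$ is continuous at a point $x_0\in(0,1]$, I would take any sequence $(x_n)$ in $(0,1]$ with $x_n\to x_0$ and estimate, using \eqref{DefP(x)} and the Cauchy--Schwarz inequality,
\[
\bigl|P(x_n)-P(x_0)\bigr|
\le|\lambda|\left(\int_0^1\bigl(K(x_n,y)-K(x_0,y)\bigr)^2\,dy\right)^{1/2}\left(\int_0^1 Q(y)^2\,dy\right)^{1/2}.
\]
Because $K$ has range contained in $\bigl(-\frac12,\frac12\bigr]$, one has $\bigl(K(x_n,y)-K(x_0,y)\bigr)^2\le\bigl|K(x_n,y)-K(x_0,y)\bigr|$, so the first integral on the right is at most $\Delta_0(x_n,x_0)$ in the notation of Definitions~2.5; and by the case $r=0$ of Lemma~2.6 this last quantity tends to $0$ as $n\to\infty$. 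Thus $P(x_n)\to P(x_0)$, as required.

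The only piece of genuine work here is this continuity of $P$, which is precisely what converts the almost-everywhere identity supplied by the Hilbert--Schmidt theorem into the pointwise identity \eqref{HSthm}; and even that is brief, amounting to a single Cauchy--Schwarz estimate fed into Lemma~2.6. The lone point $x=0$, which Lemma~2.6 does not reach, causes no difficulty, since both sides of \eqref{HSthm} vanish there.
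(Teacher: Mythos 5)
Your proof is correct, but it takes a genuinely different route from the paper's. The paper also begins by invoking Tricomi's Hilbert--Schmidt machinery for the uniform and absolute convergence of \eqref{HSseries}, but to upgrade the almost-everywhere identity to a pointwise one it proceeds quite differently: it writes
\[
P(x) - \sum_{h=1}^H a_h \phi_h (x)
= -\lambda\int_0^1 \biggl( K(x,y)  - \sum_{h=1}^H \frac{\phi_h (x) \phi_h (y)}{\lambda_h}\biggr) Q(y)\, dy ,
\]
applies Cauchy--Schwarz, and then invokes its own strengthened result \eqref{K(x,y)_in_mean} from Corollary~2.12 --- which asserts that $\int_0^1 \bigl( K(x,y) - \sum_{h\le H}\lambda_h^{-1}\phi_h(x)\phi_h(y)\bigr)^2 dy \to 0$ for \emph{every} $x\in[0,1]$, not merely almost every $x$ --- to conclude directly. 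You instead accept the a.e.\ conclusion from Tricomi, observe that the uniform limit $S$ of continuous partial sums is continuous, prove $P$ is continuous on $(0,1]$ directly from \eqref{DefP(x)} via Cauchy--Schwarz and Lemma~2.6 (with the neat device $(K(x_n,y)-K(x_0,y))^2 \le |K(x_n,y)-K(x_0,y)|$, valid since $K$ takes values in $(-\tfrac12,\tfrac12]$), handle $x=0$ separately, and then use the fact that two continuous functions agreeing on a dense set agree everywhere. Your route is arguably lighter-weight, since it bypasses the Mercer-type apparatus behind \eqref{K(x,y)_in_mean}; it also establishes continuity of $P$ as part of the proof, whereas the paper obtains that fact afterwards as Corollary~4.17 by reading it off from the theorem. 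What the paper's approach buys in exchange is a proof that parallels Tricomi's argument line for line, with the pointwise statement \eqref{K(x,y)_in_mean} doing exactly the job that Tricomi's mean-square result \eqref{l.i.m.} cannot; the Remarks following the theorem are devoted to explaining precisely this point. Both proofs are valid, and neither introduces any circularity.
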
 
\begin{proof} 
This theorem is, in essence, just one specific case of the `Hilbert-Schmidt theorem' that is 
proved in \cite[Section~3.10]{Tricomi 1957}: note, in particular, that it follows by 
virtue of Definitions~4.12 and Theorem~4.11 that the Hilbert-Schmidt theorem 
is applicable to $P(x)$. However, the Hilbert-Schmidt theorem does not quite 
show that \eqref{HSthm} holds for all $0\leq x\leq 1$: it shows only that this equality 
holds almost everywhere in the interval $[0,1]$ (regarding this, see the Remarks  
following this proof). For this reason, we give more details regarding the proof of our theorem. 
\par
We note, firstly, that the absolute and uniform convergence of the series \eqref{HSseries} 
can be established by means of the steps in \cite[Page~112, Paragraph~1]{Tricomi 1957}: 
one may, in particular, put $N=\frac{1}{2}$ there, by virtue of the case $p=2$ of 
\eqref{HalfHilbertSchmidt}. Therefore, in order to complete this proof, we need only 
show that one has $\lim_{H\rightarrow\infty} \bigl( P(x) - \sum_{h=1}^H a_h  \phi_h (x)\bigr) = 0$ 
for $0\leq x\leq 1$. 
Accordingly, we suppose now that $x\in [0,1]$. 
By \eqref{DefEigenfunction}, \eqref{DefP(x)}-\eqref{Def-b_h} and Lemma~4.15, 
we find (similarly to \cite[Page~111, Paragraph~2]{Tricomi 1957}) that one has 
\begin{equation*} 
P(x) - \sum_{h=1}^H a_h \phi_h (x) 
= -\lambda\int_0^1 \biggl( K(x,y)  - \sum_{h=1}^H \frac{\phi_h (x) \phi_h (y)}{\lambda_h}\biggr) Q(y) dy ,
\end{equation*}
for all $H=1,2,3,\ldots\ $. It therefore follows, by the Cauchy-Schwarz inequality, that, 
for all $H=1,2,3,\ldots\ $, one has:  
\begin{multline*} 
\biggl( P(x) - \sum_{h=1}^H a_h \phi_h (x) \biggr)^2 \\ 
\leq \lambda^2 
\left( \int_0^1 \biggl( K(x,y)  - \sum_{h=1}^H \frac{\phi_h (x) \phi_h (y)}{\lambda_h}\biggr)^2 dy \right) 
\left( \int_0^1 Q^2 (y) dy\right) . 
\end{multline*}
Since we have here $\int_0^1 Q^2 (y) dy < \infty$ (by Theorem~4.11), we may therefore 
deduce from the result \eqref{K(x,y)_in_mean} of Corollary~2.12 that one 
does indeed have $\lim_{H\rightarrow\infty} \bigl( P(x) - \sum_{h=1}^H a_h  \phi_h (x)\bigr) = 0$, 
as required.  
\end{proof} 
\begin{remarks} 
Since the entire latter part of the above proof is very similar indeed to 
the reasoning that can be found in \cite[Page~111, Paragraph~2]{Tricomi 1957}, 
we should point out that, where we have appealed to our result \eqref{K(x,y)_in_mean}, 
Tricomi relies instead upon the result 
\begin{equation} \label{l.i.m.}
\lim_{H\rightarrow\infty} \int_0^1\int_0^1 \left( K(x,y) - 
\sum_{h=1}^H \frac{\phi_h (x) \phi_h(y)}{\lambda_h} \right)^2 dx dy = 0 , 
\end{equation} 
stated (in other notation) in \cite[Section~3.9, Equation~(3)]{Tricomi 1957}. 
By itself, this latter result implies only that, almost everywhere in $[0,1]$, 
one has $\lim_{H\rightarrow\infty} \int_0^1 \bigl( K(x,y) - 
\sum_{h=1}^H \lambda_h^{-1} \phi_h (x) \phi_h(y) \bigr)^2 dy = 0$:   
whereas we know, by \eqref{K(x,y)_in_mean}, 
that this equality holds for all $x\in [0,1]$. This (we hope) explains   
our earlier assertion to the effect that 
the Hilbert-Schmidt theorem proved in \cite[Section~3.10]{Tricomi 1957} 
does not (by itself) show that the equality \eqref{HSthm}  
holds for all $x\in [0,1]$.  
\end{remarks} 
\begin{corollary} 
The function $P(x)$ is continuous on $[0,1]$. In particular, one has 
$P(x)\rightarrow P(0)=0$, in the limit as $x\rightarrow 0+$. 
\end{corollary}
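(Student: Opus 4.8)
The plan is to obtain this as an immediate consequence of Theorem~4.16 together with the continuity of the eigenfunctions. By Theorem~4.16 the series $a_1\phi_1(x) + a_2\phi_2(x) + a_3\phi_3(x) + \ldots$ converges uniformly on $[0,1]$, with sum equal to $P(x)$ for every $x\in[0,1]$ (see \eqref{HSthm}). Each of its partial sums $\sum_{h=1}^{H} a_h\phi_h(x)$ is a finite linear combination of the functions $\phi_1,\phi_2,\ldots$, and so, by Theorem~2.10, is continuous on $[0,1]$. The function $P$, being the uniform limit on $[0,1]$ of a sequence of functions each continuous on $[0,1]$, is therefore itself continuous on $[0,1]$.

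It then remains only to identify the value $P(0)$. By \eqref{DefK} one has $K(0,y) = 0$ for $0\leq y\leq 1$, and so it follows directly from \eqref{DefP(x)} that $P(0) = -\lambda\int_0^1 K(0,y)Q(y)\,dy = 0$. (Equivalently, one could note that $\phi_h(0) = 0$ for every $h\in{\mathbb N}$, by \eqref{DefEigenfunction} and \eqref{DefK}, so that the series \eqref{HSthm} has sum $0$ at $x=0$.) Combining $P(0)=0$ with the continuity of $P$ at the point $x=0$, we conclude that $\lim_{x\rightarrow 0+} P(x) = P(0) = 0$, which is the remaining assertion of the corollary.

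I do not anticipate any real difficulty here: all of the substantive work has already been carried out, in Theorem~4.16 (where the uniform convergence of \eqref{HSseries} was established) and in Theorem~2.10 (where the eigenfunctions were shown to be continuous), and this corollary merely assembles those two facts. An alternative, more self-contained argument would work directly from the definition \eqref{DefP(x)}, bounding $|P(x)-P(x_0)|$ by $|\lambda|\,\|Q\|\,\bigl(\Delta_0(x,x_0)\bigr)^{1/2}$ via the Cauchy-Schwarz inequality and the crude inequality $\bigl(K(x,z)-K(x_0,z)\bigr)^2 \leq |K(x,z)-K(x_0,z)|$, and then invoking Lemma~2.6 (with $r=0$); but since Lemma~2.6 is stated only for limit points in $(0,1]$, that route would still require a separate treatment of the point $x=0$, where $K(x_n,y)$ need not converge pointwise as $x_n\rightarrow 0+$. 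The route via Theorem~4.16 sidesteps this complication, and is the one I would present.
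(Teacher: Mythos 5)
Your proof is correct and follows essentially the same route as the paper: uniform convergence of \eqref{HSseries} (Theorem~4.16) plus continuity of the eigenfunctions (Theorem~2.10) gives continuity of $P$, and $P(0)=0$ is read off directly from \eqref{DefK} and \eqref{DefP(x)}. Your closing remark about the alternative Cauchy--Schwarz/Lemma~2.6 route and its difficulty at $x=0$ is a correct observation, though not needed.
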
 
\begin{proof} 
Each term of the series \eqref{HSseries} is 
(by Theorem~2.10) a continuous function on $[0,1]$. Therefore it follows, given  
the fact of the uniform convergence of this series (noted in Theorem~4.16), that this series converges 
(pointwise) to a sum that is a continuous function on $[0,1]$. By Theorem~4.16 (again), 
the sum in question is identically equal to $P(x)$, and so $P(x)$ is continuous on $[0,1]$. 
In order to complete the proof, we observe that, by the definitions \eqref{DefK} and 
\eqref{DefP(x)}, one has $P(0) = -\lambda \int_0^1 K(0,y)Q(y) dy = -\lambda \int_0^1 0 dy = 0$. 
\end{proof} 
\begin{remarks} 
In view of the above corollary, Theorem~2.10 and the definition of $K(x,y)$ in \eqref{DefK}, 
one can observe that the discontinuities of $\phi' (x)$ 
(for which see Theorem~4.1 and Corollaries~4.2 and~4.3) 
are fully accounted for by the presence, in the result of Theorem~4.13, of the term 
$\lambda \phi (1) K(x,1)$. 
\end{remarks} 
\begin{corollary}
The series $a_1 \phi_1(x) + a_2 \phi_2(x) + a_3 \phi_3(x) + \ldots $ 
is `convergent in the mean' to the function $P(x)$, in that one has  
\begin{equation}\label{P(x)_in_mean} 
\lim_{H\rightarrow\infty}\int_0^1 \biggl( P(x) - \sum_{h=1}^H a_h \phi_h(x)\biggr)^2 dx = 0 . 
\end{equation} 
The series $b_1 \phi_1(x) + b_2 \phi_2(x) + b_3 \phi_3(x) + \ldots $ converges 
in the mean to $Q(x)$: one has 
\begin{equation}\label{Q(x)_in_mean} 
\lim_{H\rightarrow\infty}\int_0^1 \biggl( Q(x) - \sum_{h=1}^H b_h \phi_h(x)\biggr)^2 dx = 0 . 
\end{equation}
One has, moreover, 
\begin{equation}\label{a_h,b_h-formulae}
-\left( \frac{\lambda_h}{\lambda}\right) \left( 1 + \frac{\lambda_h}{\lambda}\right) a_h 
= \left( 1 + \frac{\lambda_h}{\lambda}\right) b_h 
= \phi(1) \phi_h (1) - \left\langle \phi , \phi_h\right\rangle 
\quad \text{($h\in{\mathbb N}$).}   
\end{equation} 
\end{corollary}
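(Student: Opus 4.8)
The plan is to obtain all three assertions by combining Theorem~4.13, Theorem~4.16, Lemma~4.15 and the mean-square expansions of Corollary~2.12. One preliminary observation is used throughout: since $\phi=\phi_j$ is itself a member of the orthonormal system $\{\phi_1,\phi_2,\dots\}$, the inner product $\langle\phi,\phi_h\rangle=\langle\phi_j,\phi_h\rangle$ equals $1$ when $h=j$ and $0$ otherwise; in particular it is non-zero only when $h=j$, in which case $\lambda_h=\lambda$, so that $(\lambda_h/\lambda)\langle\phi,\phi_h\rangle=\langle\phi,\phi_h\rangle$ for every $h\in\mathbb{N}$.

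First, \eqref{P(x)_in_mean} is immediate: Theorem~4.16 gives uniform convergence of the partial sums of $\sum_h a_h\phi_h(x)$ to $P(x)$ on $[0,1]$, and uniform convergence on a set of finite Lebesgue measure forces convergence in $L^2\bigl([0,1]\bigr)$. Next I would pass to Fourier coefficients. By Theorem~4.13, $Q(x)=x\phi'(x)=P(x)-\phi(x)+\lambda\phi(1)K(x,1)$ holds at every $x\in(0,1]$ at which $\phi'(x)$ exists; by Corollaries~4.2 and~4.3 this is all of $(0,1]$ apart from a countable (hence null) set, so $Q=P-\phi+\lambda\phi(1)K(\cdot,1)$ as an identity in $L^2\bigl([0,1]\bigr)$. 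Taking the inner product with $\phi_h$, and using \eqref{Def-a_h}, \eqref{Def-b_h} together with $\langle K(\cdot,1),\phi_h\rangle=\int_0^1 K(1,x)\phi_h(x)\,dx=\phi_h(1)/\lambda_h$ (the case $x=1$ of \eqref{DefEigenfunction} and the symmetry of $K$), one obtains
\[
b_h=a_h-\langle\phi,\phi_h\rangle+\frac{\lambda\,\phi(1)\,\phi_h(1)}{\lambda_h}\qquad(h\in\mathbb{N}).
\]

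From here I would derive \eqref{a_h,b_h-formulae}: eliminate $a_h$ from the displayed identity using Lemma~4.15 in the form $a_h=-(\lambda/\lambda_h)b_h$, multiply through by $\lambda_h/\lambda$, and invoke the preliminary observation to replace $(\lambda_h/\lambda)\langle\phi,\phi_h\rangle$ by $\langle\phi,\phi_h\rangle$; this yields $(1+\lambda_h/\lambda)b_h=\phi(1)\phi_h(1)-\langle\phi,\phi_h\rangle$, while multiplying the relation of Lemma~4.15 by $(1+\lambda_h/\lambda)$ supplies the remaining equality $-(\lambda_h/\lambda)(1+\lambda_h/\lambda)a_h=(1+\lambda_h/\lambda)b_h$. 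For \eqref{Q(x)_in_mean} I would apply linearity of mean-square convergence to the $L^2$-identity $Q=P-\phi+\lambda\phi(1)K(\cdot,1)$: the partial sums of $\sum_h a_h\phi_h$ converge in the mean to $P$ by \eqref{P(x)_in_mean}; those of $\sum_h\langle\phi,\phi_h\rangle\phi_h$ are, for $H\geq j$, identically equal to $\phi_j=\phi$; and those of $\sum_h(\phi_h(1)/\lambda_h)\phi_h$ converge in the mean to $K(\cdot,1)$ by the case $x=1$ of \eqref{K(x,y)_in_mean}. Adding, the partial sums $\sum_{h\leq H}\bigl(a_h-\langle\phi,\phi_h\rangle+\lambda\phi(1)\phi_h(1)/\lambda_h\bigr)\phi_h$ converge in the mean to $Q$, and since the bracketed coefficients are exactly the $b_h$ by the displayed identity above, \eqref{Q(x)_in_mean} follows.

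This argument is almost entirely bookkeeping. The one step that will need care is the transition from the pointwise statement of Theorem~4.13 (``wherever $\phi'(x)$ exists'') to the almost-everywhere, and hence $L^2$, identity $Q=P-\phi+\lambda\phi(1)K(\cdot,1)$, together with the companion fact --- which comes for free from \eqref{K(x,y)_in_mean} read at $x=1$ --- that $K(\cdot,1)$ has mean-convergent eigenfunction expansion $\sum_h(\phi_h(1)/\lambda_h)\phi_h$. The orthonormality observation recorded in the first paragraph is what makes \eqref{a_h,b_h-formulae} emerge in its clean stated form, rather than with a stray factor $\lambda_h/\lambda$ multiplying $\langle\phi,\phi_h\rangle$.
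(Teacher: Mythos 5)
Your proposal is correct, and it uses all the same ingredients as the paper's proof (Theorem~4.16 for \eqref{P(x)_in_mean}, Theorem~4.13 for the decomposition $Q=P-\phi+\lambda\phi(1)K(\cdot,1)$, Lemma~4.15, orthonormality, and the case $x=1$ of \eqref{K(x,y)_in_mean}), but with a mildly different organisation. The paper first \emph{defines} $c_h := a_h + \lambda\phi(1)\phi_h(1)/\lambda_h - \langle\phi,\phi_h\rangle$, proves (via Cauchy--Schwarz, \eqref{P(x)_in_mean} and \eqref{K(x,y)_in_mean}) that $\sum_h c_h\phi_h$ converges in the mean to $Q$, and only then shows $c_h=b_h$ for all $h$ by using the Bessel-type variance decomposition $\int(Q-\sum c_h\phi_h)^2 = \int(Q-\sum b_h\phi_h)^2 + \sum(c_h-b_h)^2$. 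You reverse this: you compute $b_h=\langle Q,\phi_h\rangle$ directly from the a.e.\ $L^2$-identity (obtaining $b_h=c_h$ at once), and then deduce \eqref{Q(x)_in_mean} by adding the three mean-convergent pieces. Your route is slightly more economical --- it avoids introducing the auxiliary coefficients $c_h$ and the variance decomposition --- and it correctly flags the one step that requires justification, namely upgrading the pointwise conclusion of Theorem~4.13 (valid wherever $\phi'(x)$ exists) to an almost-everywhere identity in $L^2$, which Corollaries~4.2 and~4.3 make clean since the exceptional set is countable. The deduction of \eqref{a_h,b_h-formulae}, including the replacement of $(\lambda_h/\lambda)\langle\phi,\phi_h\rangle$ by $\langle\phi,\phi_h\rangle$, is the same in both arguments.
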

\begin{proof}
For $H\in{\mathbb N}$, the integral 
$\int_0^1 \bigl( P(x) - \sum_{h=1}^H a_h \phi_h(x)\bigr)^2 dx \geq 0$ 
is less than or equal to the square of 
$\sup_{0\leq x\leq 1} \bigl| P(x) - \sum_{h=1}^H a_h \phi_h(x)\bigr|$. 
Since we know also (by Theorem~4.16) that 
$\sup_{0\leq x\leq 1} \bigl| P(x) - \sum_{h=1}^H a_h \phi_h(x)\bigr|\rightarrow 0$, as $H\rightarrow\infty$, 
we therefore can deduce that \eqref{P(x)_in_mean} holds. 
\par 
We now put, for each $h\in{\mathbb N}$, 
\begin{equation*} 
c_h := a_h + \frac{\lambda \phi(1) \phi_h (1)}{\lambda_h} - \left\langle \phi , \phi_h\right\rangle . 
\end{equation*} 
Since it is assumed that $\phi(x)$ is the eigenfunction $\phi_j(x)$, 
we have here that $\left\langle \phi , \phi_h\right\rangle$ equals $1$ if $h=j$, and 
is otherwise equal to $0$. Let $H\geq j$ be a positive integer. 
Then, by \eqref{DefQ(x)} and Theorem~4.13, one has 
\begin{align*} 
Q(x) - \sum_{h=1}^H c_h \phi_h (x) 
 &= P(x) - \sum_{h=1}^H a_h \phi_h (x)  \\ 
 &\phantom{{=}} + \lambda \phi(1) \biggl( K(x,1) - \sum_{h=1}^H \frac{\phi_h(x) \phi_h(1)}{\lambda_h} \biggr) ,  
\end{align*}
for all $x\in (0,1]$ such that $\phi'(x)$ exists. 
Given this, together with Theorem~4.11 and \eqref{DefQ(x)}, we find 
(via an application of the Cauchy-Schwarz inequality)  
that one has 
\begin{multline*} 
\int_0^1 \biggl( Q(x) - \sum_{h=1}^H c_h \phi_h (x) \biggr)^2 dx \\ 
 \leq \left( 1 + \lambda^2 \phi^2 (1)\right) 
 \Biggl( \int_0^1  \biggl( P(x) - \sum_{h=1}^H a_h \phi_h (x)\biggr)^2 dx \\ 
+ \int_0^1 \biggl( K(x,1) - \sum_{h=1}^H \frac{\phi_h(x) \phi_h(1)}{\lambda_h} \biggr)^2 dx 
\Biggr) .  
\end{multline*}
Therefore it follows, by \eqref{P(x)_in_mean}, \eqref{K(x,y)_in_mean} and the symmetry of 
the kernel $K$, that one has 
\begin{equation}\label{ProofI1} 
\int_0^1 \biggl( Q(x) - \sum_{h=1}^H c_h \phi_h (x) \biggr)^2 dx \rightarrow 0 , 
\quad \text{as $H\rightarrow\infty$.} 
\end{equation} 
It is, at the same time, a consequence of Theorem~4.11, \eqref{DefQ(x)}, \eqref{Def-b_h}  
and the orthonormality of $\phi_1(x),\phi_2(x),\phi_3(x),\ldots $ , 
that one has 
\begin{multline*} 
\int_0^1 \biggl( Q(x) - \sum_{h=1}^H c_h \phi_h (x) \biggr)^2 dx \\ 
\begin{aligned} 
 &= \int_0^1 \biggl( Q(x) - \sum_{h=1}^H b_h \phi_h (x) \biggr)^2 dx  
+ \sum_{h=1}^H \left( c_h - b_h\right)^2 \\ 
 &\geq \sum_{h=1}^H \left( c_h - b_h\right)^2 \geq \left( c_k - b_k\right)^2 
\quad \text{when $1\leq k\leq H$} 
\end{aligned} 
\end{multline*}
(see \cite[Section~3.2]{Tricomi 1957} regarding this). 
By this and \eqref{ProofI1}, it is necessarily the case that one has 
$c_k = b_k$ for all $k\in{\mathbb N}$. We can therefore deduce from \eqref{ProofI1}  
that \eqref{Q(x)_in_mean} holds. 
\par
Let $h\in{\mathbb N}$. Recalling the definition of $c_h$, and also Lemma~4.15, 
we have now that 
$-\lambda^{-1}\lambda_h a_h = b_h = c_h 
:= a_h + \lambda \lambda_h^{-1} \phi(1) \phi_h (1) - \left\langle \phi , \phi_h\right\rangle$. 
The equations in \eqref{a_h,b_h-formulae} follow from this 
(given that $\left\langle \phi , \phi_h\right\rangle = 0$ whenever $\lambda_h \neq \lambda$). 
\end{proof} 

\medskip 

The next corollary includes a result involving the real constant $K_2 (1,1)$.  
By \eqref{K2xxactly} and \eqref{DefK}, this constant is the number  
$\log (2\pi) - \frac74 = 0{\cdot}087877\ldots\ $. 

\medskip 

\begin{corollary}[Parseval identities] 
One has 
\begin{equation}\label{ParsevalForPandQ} 
\sum_{h=1}^{\infty} a_h^2 = \| P \|^2 \in{\mathbb R} , 
\quad 
\sum_{h=1}^{\infty} b_h^2 = \int_0^1 Q^2(x) dx \in{\mathbb R} 
\end{equation} 
and 
\begin{equation}\label{ParsevalWithTapering}
\sum_{h=1}^{\infty} \left( 1 + \frac{\lambda_h}{\lambda} \right)^2 a_h^2 
= K_2(1,1) \lambda^2 \phi^2(1) - 2 \phi^2 (1) + 1 . 
\end{equation}
\end{corollary}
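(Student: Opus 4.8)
The plan is to treat the three displayed identities in turn. The two equalities in \eqref{ParsevalForPandQ} are simply the Parseval identities attached to the orthonormal system $\phi_1,\phi_2,\phi_3,\ldots$ and the mean-convergent expansions \eqref{P(x)_in_mean} and \eqref{Q(x)_in_mean} of the preceding corollary. Concretely I would first recall that $P\in L^2\bigl([0,1]\bigr)$ (established just before Theorem~4.13) and that $Q\in L^2\bigl([0,1]\bigr)$ (by Theorem~4.11 and Definitions~4.12), so that $\|P\|^2$ and $\int_0^1 Q^2(x)\,dx=\|Q\|^2$ are finite real numbers. Since $a_h=\langle P,\phi_h\rangle$ and $b_h=\langle Q,\phi_h\rangle$ (Definitions~4.14), the orthonormality of the $\phi_h$ gives, for every $H\in{\mathbb N}$,
\[
\Bigl\|P-\sum_{h=1}^H a_h\phi_h\Bigr\|^2=\|P\|^2-\sum_{h=1}^H a_h^2 \qquad\text{and}\qquad \Bigl\|Q-\sum_{h=1}^H b_h\phi_h\Bigr\|^2=\int_0^1 Q^2(x)\,dx-\sum_{h=1}^H b_h^2 .
\]
Letting $H\to\infty$ and invoking \eqref{P(x)_in_mean} and \eqref{Q(x)_in_mean} respectively then yields both equalities in \eqref{ParsevalForPandQ} (and, along the way, the convergence of the two series of squares).

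For \eqref{ParsevalWithTapering} I would start from \eqref{a_h,b_h-formulae}, whose first and last members give
\[
-\frac{\lambda_h}{\lambda}\Bigl(1+\frac{\lambda_h}{\lambda}\Bigr)a_h=\phi(1)\phi_h(1)-\langle\phi,\phi_h\rangle \qquad(h\in{\mathbb N}) .
\]
Since $\lambda_h/\lambda\neq 0$, dividing through by $-\lambda_h/\lambda$ and squaring yields
\[
\Bigl(1+\frac{\lambda_h}{\lambda}\Bigr)^2 a_h^2=\frac{\lambda^2}{\lambda_h^2}\bigl(\phi(1)\phi_h(1)-\langle\phi,\phi_h\rangle\bigr)^2 \qquad(h\in{\mathbb N}) ;
\]
the case $\lambda_h=-\lambda$, where the left-hand factor $1+\lambda_h/\lambda$ vanishes, is harmless, since \eqref{a_h,b_h-formulae} then forces $\phi(1)\phi_h(1)-\langle\phi,\phi_h\rangle=0$ as well, so that both sides are $0$. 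I would then sum over $h$. Because $\phi=\phi_j$ and the $\phi_h$ are orthonormal, one has $\langle\phi,\phi_h\rangle=0$ for $h\neq j$ and $\langle\phi,\phi_j\rangle=1$, so on expanding $\bigl(\phi(1)\phi_h(1)-\langle\phi,\phi_h\rangle\bigr)^2=\phi^2(1)\phi_h^2(1)-2\phi(1)\phi_h(1)\langle\phi,\phi_h\rangle+\langle\phi,\phi_h\rangle^2$ and summing term by term (the last two pieces contributing only at $h=j$, where $\phi_j(1)=\phi(1)$ and $\lambda_j=\lambda$), one gets
\[
\sum_{h=1}^{\infty}\Bigl(1+\frac{\lambda_h}{\lambda}\Bigr)^2 a_h^2=\lambda^2\phi^2(1)\sum_{h=1}^{\infty}\frac{\phi_h^2(1)}{\lambda_h^2}+1-2\phi^2(1) .
\]
Finally, evaluating $\sum_{h=1}^{\infty}\lambda_h^{-2}\phi_h^2(1)=K_2(1,1)$ by the case $x=1$ of \eqref{K2diagonal} in Corollary~2.12 gives precisely the right-hand side of \eqref{ParsevalWithTapering}.

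I do not anticipate a real obstacle. The one point deserving care is the legitimacy of the term-by-term handling of the series in the last display; this is immediate here, since all summands $(1+\lambda_h/\lambda)^2 a_h^2$ are non-negative, the cross term and the $\langle\phi,\phi_h\rangle^2$ term are each supported on the single index $h=j$, and $\sum_h\lambda_h^{-2}\phi_h^2(1)$ converges (to $K_2(1,1)\leq\tfrac14$) by \eqref{K2diagonal}. Everything else is routine bookkeeping.
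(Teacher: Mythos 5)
Your proof is correct; the first half (the two identities in \eqref{ParsevalForPandQ}) is essentially what the paper does, namely the standard equivalence of mean-square convergence with the Parseval identity for an orthonormal system. For \eqref{ParsevalWithTapering}, however, you take a genuinely different route. The paper observes that the series $\sum (b_h - a_h)\phi_h$ converges in the mean to $Q - P$, which (by Theorem~4.13 and Corollary~4.3) agrees almost everywhere with $x \mapsto \lambda\phi(1)K(x,1) - \phi(x)$, so the Parseval identity gives $\sum(1+\lambda_h/\lambda)^2 a_h^2 = \int_0^1 (\lambda\phi(1)K(x,1) - \phi(x))^2\,dx$; it then evaluates this single definite integral directly by expanding the square and using \eqref{K2symmetrically}, \eqref{DefEigenfunction} and orthonormality. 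You instead start from the closed-form coefficient relation \eqref{a_h,b_h-formulae}, rearrange it termwise into $\bigl(1+\lambda_h/\lambda\bigr)^2 a_h^2 = (\lambda^2/\lambda_h^2)\bigl(\phi(1)\phi_h(1)-\langle\phi,\phi_h\rangle\bigr)^2$, and sum, replacing the integral evaluation with the bilinear-formula evaluation $\sum_h \lambda_h^{-2}\phi_h^2(1)=K_2(1,1)$ from \eqref{K2diagonal}. Both routes rest on the same ingredients (Corollary~4.18, Lemma~4.15, and either \eqref{K2symmetrically} or \eqref{K2diagonal}), but the paper's argument needs one less rearrangement of an infinite series, while yours makes more transparent how each of the three terms in the right-hand side of \eqref{ParsevalWithTapering} arises. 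Your justification of the termwise summation (non-negativity of the summands and the fact that only the index $h=j$ contributes to the cross and $\langle\phi,\phi_h\rangle^2$ terms) is adequate; you could equally well remark that convergence is already guaranteed since $(1+\lambda_h/\lambda)a_h = a_h - b_h$, whence $\sum(1+\lambda_h/\lambda)^2 a_h^2 \leq 2\sum a_h^2 + 2\sum b_h^2 < \infty$ by \eqref{ParsevalForPandQ}.
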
 
\begin{proof} 
We have seen that $P(x)$ is measurable and square integrable on $[0,1]$. 
Therefore, given \eqref{Def-a_h} and the orthonormality of $\phi_1(x),\phi_2(x),\phi_3(x),\ldots $ , 
it follows (see \cite[Section~3.2]{Tricomi 1957}) that a 
necessary and sufficient condition for \eqref{P(x)_in_mean} to hold 
is that one has $\sum_{h=1}^{\infty} a_h^2 = \int_0^1 P^2 (x) dx$. 
Thus, since we showed already (in Corollary~4.18)  that \eqref{P(x)_in_mean} does hold, 
and since $\int_0^1 P^2 (x) dx = \| P \|^2\in{\mathbb R}$ (by virtue of 
$P(x)$ being square integrable on $[0,1]$), we 
must have  $\sum_{h=1}^{\infty} a_h^2 = \int_0^1 P^2 (x) dx = \| P \|^2\in{\mathbb R}$. 
This proves the first part of \eqref{ParsevalForPandQ}: given 
\eqref{Q(x)_in_mean}, \eqref{Def-b_h}, \eqref{DefQ(x)} and Theorem~4.11, 
one can give a similar proof of the other part. 
\par 
By \eqref{P(x)_in_mean} and \eqref{Q(x)_in_mean},   
the series $\left( b_1 - a_1\right)\phi_1(x) + 
\left( b_2 - a_2\right)\phi_2(x) +  
\left( b_3 - a_3\right) \\ \phi_3(x) + \ldots $ 
is convergent in the mean to the function $x\mapsto Q(x) - P(x)$, 
which (by \eqref{DefQ(x)}, Theorem~4.13 and Corollary~4.3) is identical almost 
everywhere in $[0,1]$ to the function $x\mapsto \lambda \phi(1) K(x,1) - \phi(x)$. 
Since we have also $b_h - a_h   
= - \left( 1 + \lambda^{-1} \lambda_h\right) a_h$ for all $h\in{\mathbb N}$ 
(by virtue of Lemma~4.15), it therefore follows (similarly to how we were able to deduce  
the equalities in \eqref{ParsevalForPandQ}) that one must have 
both 
\begin{equation*} 
\int_0^1 \left( \lambda\phi(1) K(x,1) - \phi(x)\right) \phi_h (x) dx 
= - \left( 1 + \frac{\lambda_h}{\lambda} \right) a_h 
\quad\text{($h\in{\Bbb N}$)} 
\end{equation*} 
and the corresponding Parseval identity: 
\begin{equation*} 
\sum_{h=1}^{\infty} \left( 1 + \frac{\lambda_h}{\lambda} \right)^2 a_h^2 
= \int_0^1 \left( \lambda\phi(1) K(x,1) - \phi(x)\right)^2 dx . 
\end{equation*} 
Since $K$ is a symmetric kernel, the last integral above may be evaluated by 
expansion of the integrand, followed by term by term integration and the 
application of \eqref{K2symmetrically}, \eqref{DefEigenfunction} 
and the orthonormality of $\phi_1(x),\phi_2(x),\phi_3(x),\ldots $ : 
we thereby obtain the result stated in \eqref{ParsevalWithTapering}. 
\end{proof} 

\section{Asymptotics as $x\rightarrow 0+$} 

Throughout this section we assume, as in the preceding section, 
that $\phi$ is one of the eigenfunctions in the sequence $\phi_1,\phi_2,\phi_3,\ldots\ $, 
and that $\lambda$ is the corresponding eigenvalue of the kernel $K$. 

\begin{lemma} 
Let $x\in (0,1]$ be such that $\phi'(x)$ exists. Then one has 
\begin{equation*}  
x\phi'(x) = 
-\lambda \phi (1) \widetilde B_1 \left( \frac{1}{x}\right) 
-\lambda^2 \phi(1) K_2 (x , 1) 
+\lambda^2 \int_0^1 K_2 (x , z) z\phi'(z) dz\;. 
\end{equation*} 
\end{lemma}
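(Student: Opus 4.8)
The plan is to iterate the identity of Theorem~4.13 exactly once, and then to simplify the result using the eigenfunction equation \eqref{DefEigenfunction} together with the definition \eqref{K2symmetrically} of $K_2$. Recall that Theorem~4.13 says $x\phi'(x) = -\phi(x) + \lambda\phi(1)K(x,1) + P(x)$ whenever $\phi'(x)$ exists, where $P$ and $Q$ are the functions introduced in Definitions~4.12, with $P(x) = -\lambda\int_0^1 K(x,y)Q(y)\,dy$ by \eqref{DefP(x)} and $Q(y)=y\phi'(y)$ by \eqref{DefQ(x)}. Since $\phi'(y)$ exists for almost every $y\in[0,1]$, Theorem~4.13 also gives $Q(y) = y\phi'(y) = -\phi(y) + \lambda\phi(1)K(y,1) + P(y)$ for almost every $y\in[0,1]$. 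The first step is to substitute this a.e.-valid expression for $Q(y)$ into the defining integral $P(x) = -\lambda\int_0^1 K(x,y)Q(y)\,dy$, producing a sum of three integrals.

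Next, each of those three integrals is evaluated in turn. By \eqref{DefEigenfunction} one has $\int_0^1 K(x,y)\phi(y)\,dy = \lambda^{-1}\phi(x)$; by \eqref{K2symmetrically} one has $\int_0^1 K(x,y)K(y,1)\,dy = K_2(x,1)$; and, using Fubini's theorem together with \eqref{K2symmetrically} again, $\int_0^1 K(x,y)P(y)\,dy = -\lambda\int_0^1\bigl(\int_0^1 K(x,y)K(y,z)\,dy\bigr)Q(z)\,dz = -\lambda\int_0^1 K_2(x,z)Q(z)\,dz$. Collecting these contributions I would obtain
\[
P(x) = \phi(x) - \lambda^2\phi(1)K_2(x,1) + \lambda^2\int_0^1 K_2(x,z)\,z\phi'(z)\,dz .
\]

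Finally I would feed this back into Theorem~4.13: the two occurrences of $\phi(x)$ cancel, leaving $x\phi'(x) = \lambda\phi(1)K(x,1) - \lambda^2\phi(1)K_2(x,1) + \lambda^2\int_0^1 K_2(x,z)\,z\phi'(z)\,dz$, and then I would rewrite $\lambda\phi(1)K(x,1)$ as $-\lambda\phi(1)\widetilde B_1(1/x)$ by invoking \eqref{KtoTildeB1} with $y=1$ and $w=x$; this yields precisely the asserted formula.

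The only step that requires genuine care is the Fubini interchange passing from $\int_0^1 K(x,y)P(y)\,dy$ to $\int_0^1 K_2(x,z)Q(z)\,dz$. This is legitimate because $K$ is bounded by $\tfrac12$ on $[0,1]\times[0,1]$ (by \eqref{DefK}, or \eqref{HalfHilbertSchmidt}) while $Q\in L^2([0,1])\subset L^1([0,1])$ by Theorem~4.11, so the double integral of $|K(x,y)K(y,z)Q(z)|$ over $[0,1]\times[0,1]$ is at most $\tfrac14\int_0^1|Q(z)|\,dz<\infty$. Likewise, substituting the identity $Q(y)=-\phi(y)+\lambda\phi(1)K(y,1)+P(y)$ under an integral sign in $y$ is harmless, since $Q$ itself is only defined almost everywhere and modifying an integrand on a null set does not affect the value of the integral; no point is lost because the exceptional set (where $\phi'$ fails to exist) is countable by Corollaries~4.2 and~4.3.
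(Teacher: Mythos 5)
Your argument is correct and follows essentially the same route as the paper's proof: both iterate Theorem~4.13 once inside the integral $\lambda\int_0^1 K(x,y)\,y\phi'(y)\,dy$, apply \eqref{DefEigenfunction} and \eqref{K2symmetrically} to the first two resulting terms, invoke Fubini to convert the nested integral into $\int_0^1 K_2(x,z)\,z\phi'(z)\,dz$, and finish with \eqref{KtoTildeB1}. The only difference is cosmetic bookkeeping (you keep $P$ and $Q$ symbolic and unpack $P(y)$ at the end, whereas the paper substitutes the fully expanded formula for $y\phi'(y)$ directly), and your justifications of the a.e.\ substitution and the Fubini interchange are exactly the points the paper also relies on.
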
 

\begin{proof} By Theorem~4.13 and Definitions~4.12, followed by 
\eqref{DefEigenfunction} and Definitions~2.1, we have: 
\begin{multline*} 
x\phi'(x) = -\phi(x) + \lambda \phi(1) K(x , 1) -\lambda\int_0^1 K(x,y) y\phi'(y) dy \\ 
\begin{aligned} 
 &= -\phi(x) + \lambda \phi(1) K(x , 1) \\ 
 &\phantom{{=}}\ \, - \lambda\int_0^1 K(x,y) 
 \left( -\phi(y) + \lambda \phi(1) K(y , 1) -\lambda\int_0^1 K(y , z) z\phi'(z) dz \right)  dy \\ 
 &= -\phi(x) + \lambda \phi(1) K(x , 1) \\ 
 &\phantom{{=}}\ \, + \phi (x) - \lambda^2 \phi(1) K_2 (x , 1) 
 + \lambda^2 \int_0^1 K(x,y) \left( \int_0^1 K(y , z) z\phi'(z) dz \right)  dy\;. 
\end{aligned}
\end{multline*}
The lemma therefore follows by observing that $K(x , 1) = - \widetilde B_1 (1/x)$, by \eqref{KtoTildeB1}, 
that $-\phi(x) +\phi(x) =0$,  
and that, by virtue of Theorem~4.11, an application of Fubini's Theorem 
\cite[Theorem~6.1]{Wheeden and Zygmund 2015} gives: 
\begin{multline*} 
\int_0^1 K(x,y) \left( \int_0^1 K(y , z) z\phi'(z) dz \right)  dy \\ 
 = \int_0^1 \left( \int_0^1 K(x,y)  K(y , z)  dy \right) z\phi'(z) dz 
 = \int_0^1 K_2 (x , z) z\phi'(z) dz 
\end{multline*} 
(the last equality following from Definitions~2.1). 
\end{proof} 

\begin{definitions} 
We define 
\begin{equation}\label{DefI_0} 
I_0 (x , y) = \int_0^x K_2 (z ,  y) dz 
\qquad\text{($0\leq x,y\leq 1$)} . 
\end{equation} 
For $0<x,y\leq 1$ and $0\leq w\leq 1$, we put: 
\begin{equation}\label{DefI} 
I(x , y ; w) = \int_x^y \frac{K_2 (z , w) dz}{z^2}\;. 
\end{equation} 
\end{definitions} 

\begin{lemma} 
One has 
\begin{equation} 
I_0 (x , y) \ll \left( 1 + \log\frac{1}{y}\right) y 
\qquad\text{($0<x,y\leq 1$)} . 
\end{equation}
\end{lemma}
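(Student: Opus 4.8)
The plan is to deduce this directly from the off-diagonal bound \eqref{K2BoundOffDiagonal} of Lemma~2.4, which controls $|K_2(z,y)|$ by $\bigl(\tfrac14+\tfrac{1}{36\sqrt3}\bigr)\min\{z,y\}/\max\{z,y\}$. First I would note that the integral in \eqref{DefI_0} certainly exists: by Corollary~2.8 the function $z\mapsto K_2(z,y)$ is continuous on $[0,1]$ (and in any case $K_2$ is bounded and measurable on $[0,1]\times[0,1]$). Hence $|I_0(x,y)|\le\int_0^x|K_2(z,y)|\,dz\le\bigl(\tfrac14+\tfrac{1}{36\sqrt3}\bigr)\int_0^x\frac{\min\{z,y\}}{\max\{z,y\}}\,dz$, so that it suffices to bound the last integral by $O\!\bigl((1+\log\tfrac1y)\,y\bigr)$.

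Next I would split the range of integration at the point $z=\min\{x,y\}$, so as to treat the two possible orderings of $x$ and $y$ uniformly. On $[0,\min\{x,y\}]$ the integrand equals $z/y$, and its contribution is $\min\{x,y\}^2/(2y)\le y/2$ (using $\min\{x,y\}\le y$). On the interval $[\min\{x,y\},x]$ --- which is empty when $x\le y$ --- the integrand equals $y/z$, contributing $y\log\!\bigl(x/\min\{x,y\}\bigr)\le y\log(1/y)$, where the inequality uses $x\le 1$ and the fact that $\min\{x,y\}=y$ on this portion. Adding the two parts gives $\int_0^x\frac{\min\{z,y\}}{\max\{z,y\}}\,dz\le y\bigl(\tfrac12+\log\tfrac1y\bigr)\le y\bigl(1+\log\tfrac1y\bigr)$, and the claimed estimate follows, with an explicit admissible implied constant $\tfrac14+\tfrac{1}{36\sqrt3}$.

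I do not expect any real obstacle: the lemma is a short corollary of the pointwise bound on $K_2$ already established in Lemma~2.4. The only point requiring a little care is handling both cases $x\le y$ and $x>y$ at once (achieved by the split at $\min\{x,y\}$), together with the observation that $\log(x/y)\le\log(1/y)$ whenever $x\le 1$, which is what allows the logarithmic factor to be expressed in terms of $y$ alone rather than $x/y$.
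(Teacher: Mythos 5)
Your proof is correct and takes essentially the same route as the paper: both rely on the off-diagonal bound \eqref{K2BoundOffDiagonal} of Lemma~2.4 and then split the range of integration where $\min\{z,y\}/\max\{z,y\}$ changes form. The only difference is cosmetic --- the paper simply majorises $\int_0^x$ by $\int_0^1$ and splits at $z=y$, while you keep the range $[0,x]$ and split at $\min\{x,y\}$, which is marginally sharper but leads to the same estimate.
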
 

\begin{proof} 
Let $x,y\in(0,1]$. By \eqref{DefI_0} and \eqref{K2BoundOffDiagonal}, we find that  
\begin{align*} 
\left| I_0 (x , y)\right|\leq \int_0^1 \left| K_2 (z , y)\right| dz  
 &=\int_0^y O\left( \frac{z}{y}\right) dz + \int_y^1 O\left( \frac{y}{z}\right) dz \\
 &= O\left( y\right) + O\left( y\log\frac{1}{y}\right) \;,
\end{align*} 
as required. 
\end{proof} 

\begin{theorem} 
Let $x\in (0,1]$ be such that $\phi' (x)$ exists. Then one has 
\begin{equation*} 
x \phi' (x) = - \lambda \phi(1) \widetilde B_1 \left( \frac{1}{x}\right) 
+ O\left( |\lambda|^5 \left( \log |\lambda|\right)^2 \left( 1 + \log\frac{1}{x}\right) x\right) \;. 
\end{equation*} 
\end{theorem}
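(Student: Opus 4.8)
The plan is to obtain Theorem~5.4 as an essentially immediate consequence of Lemma~5.1, using only bounds already at our disposal. Lemma~5.1 asserts that, whenever $x\in(0,1]$ is a point at which $\phi'(x)$ exists,
\[
x\phi'(x) + \lambda\phi(1)\widetilde B_1\!\left(\tfrac1x\right)
= -\lambda^2\phi(1)K_2(x,1) + \lambda^2\int_0^1 K_2(x,z)\,z\phi'(z)\,dz\;,
\]
so it suffices to show that the right-hand side is $O\bigl(|\lambda|^5(\log|\lambda|)^2(1+\log(1/x))x\bigr)$.

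First I would dispose of the term $-\lambda^2\phi(1)K_2(x,1)$: by \eqref{PhiBoundedUniformly} one has $|\phi(1)|\leq\frac12|\lambda|$, while the off-diagonal estimate \eqref{K2BoundOffDiagonal} of Lemma~2.4 gives $|K_2(x,1)|\leq\bigl(\frac14+\frac1{36\sqrt3}\bigr)x$; hence $\lambda^2|\phi(1)|\,|K_2(x,1)| = O(|\lambda|^3 x)$. Since \eqref{SpectrumBound} gives $|\lambda|>2$ and trivially $1+\log(1/x)\geq1$, this contribution is already of the size claimed in the theorem.

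Next I would treat the integral term. Theorem~4.9 shows that $z\phi'(z) = O\bigl(|\lambda|^3(\log|\lambda|)^2\bigr)$ at every point of $(0,1]$ where $\phi'$ is defined --- and, by Corollaries~4.2 and~4.3, that is almost every point; moreover $z\mapsto z\phi'(z)$ is, by Theorem~4.11, a genuine measurable function on $[0,1]$, and $z\mapsto K_2(x,z)$ is continuous (Corollary~2.8), so the integrand is measurable and bounded and the integral is legitimate. It follows that
\[
\left|\,\lambda^2\int_0^1 K_2(x,z)\,z\phi'(z)\,dz\,\right|
= O\!\left(|\lambda|^5(\log|\lambda|)^2\right)\cdot\int_0^1\bigl|K_2(x,z)\bigr|\,dz\;.
\]
The only remaining ingredient is the bound $\int_0^1|K_2(x,z)|\,dz = O\bigl((1+\log(1/x))x\bigr)$, and this follows by the symmetry of $K_2$ from exactly the computation performed in the proof of Lemma~5.3: splitting the range at $z=x$ and applying \eqref{K2BoundOffDiagonal} on each of $[0,x]$ and $[x,1]$ gives $\int_0^x O(z/x)\,dz + \int_x^1 O(x/z)\,dz = O(x) + O(x\log(1/x))$. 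Combining this with the two displays above yields the theorem.

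Since each step merely quotes an earlier result, there is no real difficulty here; the one point deserving care is the justification (just indicated) that Theorem~4.9, stated pointwise for those $x$ at which $\phi'(x)$ exists, may legitimately be inserted inside the integral over $z\in[0,1]$, which is where one appeals to $\phi'$ being defined almost everywhere and to the measurability of $z\mapsto z\phi'(z)$ furnished by Theorem~4.11.
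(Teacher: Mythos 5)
Your proof is correct and follows essentially the same route as the paper's: both start from Lemma~5.1, dispose of the $K_2(x,1)$ term via \eqref{PhiBoundedUniformly} and \eqref{K2BoundOffDiagonal}, and bound the integral term by combining Theorem~4.9 with the estimate $\int_0^1 |K_2(x,z)|\,dz \ll (1+\log(1/x))x$ extracted from the proof of Lemma~5.3. The extra care you take over measurability and the almost-everywhere definition of $\phi'$ is a sensible inclusion but does not change the argument.
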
 

\begin{proof} 
In view the bounds \eqref{PhiBoundedUniformly} and \eqref{SpectrumBound}, 
the theorem will follow from Lemma~5.1, once it is shown that one has both  
$K_2 (x , 1)\ll  x$ and 
\begin{equation*} 
\int_0^1 K_2 (x , z) z\phi'(z) dz 
\ll |\lambda|^3 \left( \log |\lambda|\right)^2 \left( 1 + \log\frac{1}{x}\right) x\;. 
\end{equation*}  
The first of these two estimates is contained in \eqref{K2BoundOffDiagonal}. 
The other follows by noting that one has 
$\int_0^1 K_2 (x , z) z\phi'(z) dz 
\ll |\lambda|^3 \left( \log |\lambda|\right)^2 \int_0^1 \left| K_2 (z , x)\right| dz$ 
(by Theorems~4.9 and 4.11, and the symmetry of $K_2$) and recalling that, 
in our proof of Lemma~5.3, we found that  
$\int_0^1 \left| K_2 (z , x)\right| dz =O\left( x + x\log(1/x)\right)$. 
\end{proof} 

\begin{remarks} 
Since $\int_0^1 \left| 1 + \log (1/x)\right| dx = \int_0^1 \left( 1 + \log (1/x)\right) dx 
=2 < \infty$, while 
$\int_0^1 \bigl| \widetilde B_1 (1/x)\bigr| x^{-1} dx = \int_1^{\infty} \bigl| \widetilde B_1 (t)\bigr| t^{-1} dt 
\geq\sum_{n=1}^{\infty} \int_n^{n+1/3} \frac{1}{6} t^{-1} dt =\infty$, 
it is therefore a corollary of Theorems~4.11 and 5.4 that $\phi'(x)$ is Lebesgue integrable 
on $[0,1]$ if and only if $\phi(1)=0$. Thus if $\phi'(x)$ is Lebesgue integrable on $[0,1]$  
then, by \eqref{phi(1)asIntegral}, one has $\int_0^1 \phi' (x) dx = \phi(1) = 0$. 
\end{remarks} 

\begin{lemma} 
For $0<x,y\leq 1$, one has: 
\begin{equation*} 
I_0 (x , y) = - {\textstyle\frac{1}{6}} \left( \frac{x}{y}\right)^{\!\!3 }
\sum_{m>\frac{1}{y}} \frac{\widetilde B_3 \left( \frac{my}{x}\right)}{m^3} 
+O\left( \frac{(x+y)x^3}{y}\right) 
\ll \frac{x^3}{y}\;, 
\end{equation*} 
where $\widetilde B_3 (t) := \{ t\}^3 - {\textstyle\frac{3}{2}} \{ t\}^2 + {\textstyle\frac{1}{2}} \{ t\}\,$ 
(the third periodic Bernouilli function). 
\end{lemma}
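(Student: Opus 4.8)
The plan is to start from the explicit formula for $K_2(z,y)$ supplied by Lemma~2.3 and to integrate it over $z\in(0,x)$ term by term. On $(0,x)$ each of the four terms of Lemma~2.3 is dominated by a constant multiple of $z$, $z^2$, or $z/y^2$, so the splitting of $I_0(x,y)=\int_0^x K_2(z,y)\,dz$ into four integrals is legitimate, and the only term that produces the asserted main part is the last one, $\frac{z}{2y^2}\sum_{m>1/y}\widetilde B_2\!\bigl(\tfrac{my}{z}\bigr)m^{-2}$. After interchanging $\int_0^x$ with $\sum_{m>1/y}$ (justified by absolute convergence), the substitution $s=my/z$ recasts $\int_0^x z\widetilde B_2(my/z)\,dz$ as $(my)^2\int_{my/x}^{\infty}\widetilde B_2(s)s^{-3}\,ds$. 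Two integrations by parts — using $\int\widetilde B_2=\tfrac13\widetilde B_3$ and $\int\widetilde B_3=\tfrac14\widetilde B_4$ together with the continuity of $\widetilde B_2,\widetilde B_3,\widetilde B_4$ at the integers — give $\int_a^{\infty}\widetilde B_2(s)s^{-3}\,ds=-\tfrac13\widetilde B_3(a)a^{-3}+O(a^{-4})$; substituting $a=my/x$ and summing over $m$ then yields the main term $-\tfrac16(x/y)^3\sum_{m>1/y}\widetilde B_3(my/x)m^{-3}$ together with an error $\ll(x^4/y^4)\sum_{m>1/y}m^{-4}\ll x^4/y$.

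Next I would show that the other three terms of Lemma~2.3 contribute only to the error. The first, $-\tfrac12 z\widetilde B_2(1/z)\widetilde B_1(1/y)$, integrates (via $t=1/z$) to $-\tfrac12\widetilde B_1(1/y)\int_{1/x}^{\infty}\widetilde B_2(t)t^{-3}\,dt=O(x^3)$. For the third, $-\tfrac1{2y}\int_0^x\!\bigl(\int_{1/z}^{\infty}\widetilde B_2(t)t^{-2}\,dt\bigr)dz$, the naive bound is only $O(x^3/y)$, which is not strong enough; instead I would integrate by parts in $t$ first, extract the oscillatory main part $-\tfrac{z^2}{3}\widetilde B_3(1/z)$, and use the cancellation $\int_0^x z^2\widetilde B_3(1/z)\,dz=\int_{1/x}^{\infty}\widetilde B_3(t)t^{-4}\,dt=O(x^4)$ (again the mean-zero property of $\widetilde B_3$), so that the double integral is $O(x^4)$ and the term is $O(x^4/y)$. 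For the second term I would swap the order of integration and substitute $r=zt/y$ to obtain $\int_{1/x}^{\infty}\widetilde B_2(t)t^{-3}\psi(t)\,dt$ with $\psi(t):=\int_{1/y}^{xt/y}\widetilde B_1(r)r^{-1}\,dr$; the essential facts are $\psi(1/x)=0$, $|\psi(t)|\ll1$ and $t|\psi'(t)|\ll1$, so that $h(t):=\psi(t)t^{-3}$ satisfies $h(1/x)=0$ and $|h'(t)|\ll t^{-4}$, and one integration by parts against $\widetilde B_2=\tfrac13\widetilde B_3'$ again gives $O(x^3)$.

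Summing the four contributions yields $I_0(x,y)=-\tfrac16(x/y)^3\sum_{m>1/y}\widetilde B_3(my/x)m^{-3}+O(x^3)+O(x^4/y)$, and since $(x+y)x^3/y=x^3+x^4/y$ this is exactly the asserted formula. The final estimate $I_0(x,y)\ll x^3/y$ then follows because the main term is $\ll(x/y)^3\sum_{m>1/y}m^{-3}\ll(x/y)^3y^2=x^3/y$, while the error is $O(x^3/y)$ as $x+y\le2$.

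The main obstacle is controlling the error terms at the required strength. The obvious estimates for the second and third terms ($O(x^2y)$ and $O(x^3/y)$) and the obvious error in the fourth ($O(x^3/y)$ if one uses only $\int_a^\infty\widetilde B_2(s)s^{-3}\,ds=-\tfrac13\widetilde B_3(a)a^{-3}+O(a^{-3})$ and sums over $m$) all fail to fit inside $O\bigl((x+y)x^3/y\bigr)$ when $x$ and $y$ are small; in each case the remedy is to carry out the integration by parts that exposes an oscillatory main term whose antiderivative is a bounded periodic Bernoulli function, thereby gaining the extra power needed — supplemented, for the second term, by the innocuous identity $\psi(1/x)=0$, which kills the boundary contribution that would otherwise be of size $x^2y$.
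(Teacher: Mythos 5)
Your proposal is correct and follows essentially the same route as the paper: you split $I_0$ into the four integrals coming from Lemma~2.3, extract the main term from the fourth integral by the substitution $z=my/t$ and two integrations by parts, and gain the extra factors of $x$ in the first and third terms through the same Bernoulli-function mean-zero cancellations (including the crucial second cancellation $\int_0^x z^2\widetilde B_3(1/z)\,dz\ll x^4$ for the third term). The only substantive variation is in the second term, where you apply Fubini directly to the $(z,t)$-integral, substitute $r=zt/y$, and integrate once by parts against $\widetilde B_3$ using $\psi(1/x)=0$ and the bounds $|\psi(t)|\ll1$, $t|\psi'(t)|\ll1$, whereas the paper first rewrites $f_2(z)$ via the substitution $t=(zw)^{-1}$ and does Fubini in the $w$-variable, reducing to the $I_1$-type estimate; both deliver the same bound $O(x^3)$.
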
 

\begin{proof}
Let $0<x,y\leq 1$. Define functions $f_1,\ldots ,f_4$ on $[0,1]$ by putting 
$f_j (0) = 0$ ($j=1,\ldots ,4$) and  
\begin{equation*} 
f_1(z) = {\textstyle\frac{1}{2}} \widetilde B_1 \left( \frac{1}{y}\right) z \widetilde B_2 \left( \frac{1}{z}\right) \;,  
\qquad 
f_2(z) = \frac{1}{z} \int_{\frac{1}{z}}^{\infty} \widetilde B_2 (t) 
\widetilde B_1 \left( \frac{t z}{y}\right) \frac{dt}{t^3} 
\end{equation*}
\begin{equation*} 
f_3(z) = \frac{1}{2y} \int_{\frac{1}{z}}^{\infty} \widetilde B_2 (t) \frac{dt}{t^2} 
\quad\text{and}\quad
f_4(z) = \frac{z}{2y^2} 
\sum_{m > \frac{1}{y}} \frac{\widetilde B_2 \left( \frac{my}{z}\right)}{m^2}\;, 
\end{equation*} 
for $0<z\leq 1$. 
If these four functions are Lebesgue integrable on $[0,1]$  
then, by \eqref{DefI_0} and Lemma~2.3, we will have: 
\begin{equation}\label{Proof-s7A*1} 
I_0 (x,y) = \sum_{j=1}^4 (-1)^j \int_0^x f_j(z) dz = \sum_{j=1}^4 (-1)^j I_j\quad\text{(say)} . 
\end{equation} 
\par 
The function $\widetilde B_2$ is continuous and periodic on ${\mathbb R}$, and is therefore bounded.  
It follows that $f_1$ and $f_3$ are continuous on $(0,1]$. For $0<z\leq 1$, one has 
$f_1(z) =O(z)$ and $f_3 (z) = \frac{1}{2} y^{-1} \int_{1/x}^{\infty} O\left( t^{-2}\right) dt  = O(x/y)$, 
and so $f_1(z)$ and $f_3(z)$ are also continuous at the point $z=0$. 
Thus $f_1$ and $f_3$ are continuous on $[0,1]$. Similarly, each 
term in the infinite series $\sum_{m>1/y} m^{-2} z \widetilde B_2 (my/z)$ is 
continuous (as a function of $z$) on the interval $(0,1]$, and tends 
to the limit~$0$ as $z\rightarrow 0+$. Since this series 
sums to $2 y^2 f_4(z)$, and converges uniformly for $0 < z\leq 1$, we may 
conclude that the function $f_4$ is continuous on $[0,1]$. 
By Lemma~2.3, Definitions~2.1 and \eqref{DefK}, 
we have $\sum_{j=1}^4 (-1)^j f_j(z) = K_2(z,y)\,$ ($0\leq z\leq 1$). 
Therefore, since $f_1$, $f_3$ and $f_4$ are continuous on $[0,1]$, 
and since the same is true of the function $z\mapsto K_2 (z,y)\,$ 
(see Corollary~2.8), we deduce that the function $f_2$ is continuous on $[0,1]$. 
Thus the functions $f_1,\ldots ,f_4$ are integrable on $[0,1]$,   
since they are continuous on this interval. 
We therefore do have \eqref{Proof-s7A*1}. 
\par 
We shall complete the proof of the lemma by estimating 
the integrals $I_1,\ldots ,I_4$. 
We note, firstly, that 
\begin{equation*} 
I_1 = {\textstyle\frac{1}{2}} \widetilde B_1 \left( \frac{1}{y}\right) 
\int_0^x  z \widetilde B_2 \left( \frac{1}{z}\right) dz = 
{\textstyle\frac{1}{2}} \widetilde B_1 \left( \frac{1}{y}\right)  
\int_{\frac{1}{x}}^{\infty} t^{-3} \widetilde B_2 (t) dt  
\end{equation*} 
(by means of the substitution $z=1/t$). 
Since $\widetilde B_3$ is bounded, and 
satisfies $\frac{d}{dt} \widetilde B_3 (t) = 3 \widetilde B_2 (t)\,$ ($t\in{\mathbb R}$), 
we find (through integration by parts) that 
\begin{align*} 
\int_{\frac{1}{x}}^{\infty} t^{-3} \widetilde B_2 (t) dt 
 &= - {\textstyle\frac{1}{3}} x^3 \widetilde B_3\left( \frac{1}{x}\right) 
+ \int_{\frac{1}{x}}^{\infty} t^{-4} \widetilde B_3 (t) dt \\ 
 &= O\left( x^3\right) + \int_{\frac{1}{x}}^{\infty} O\left( t^{-4}\right) dt 
 \ll x^3\;. 
\end{align*} 
It follows, since $\bigl| \widetilde B_1 (1/y)\bigr| \leq \frac{1}{2}$, 
that we have: 
\begin{equation}\label{Proof-s7A*2}
I_1 = O\left( x^3\right) \;. 
\end{equation}
\par
Using the substitution $t=(zw)^{-1}$, we find that 
\begin{equation*} 
f_2 (z) = \int_0^1 zw \widetilde B_2 \left( \frac{1}{zw}\right) B_1 \left( \frac{1}{yw}\right) dw\;. 
\end{equation*} 
By this and Fubini's theorem, we have  
\begin{align*} 
I_2 = \int_0^x f_2 (z) dz &= 
\int_0^1 \left( \int_0^x z \widetilde B_2 \left( \frac{1}{zw}\right) dz\right) w B_1 \left( \frac{1}{yw}\right) dw \\ 
 &= \int_0^1 \left( \int_0^{wx} u \widetilde B_2 \left( \frac{1}{u}\right) du\right) 
w^{-1}  B_1 \left( \frac{1}{yw}\right) dw \;. 
\end{align*} 
Therefore, by a calculation similar to that which gave us \eqref{Proof-s7A*2},  
we obtain: 
\begin{equation} \label{Proof-s7A*3}
I_2 = \int_0^1 O\left ( (wx)^3\right) \cdot w^{-1}  B_1 \left( \frac{1}{yw}\right) dw 
=\int_0^1 O\left( x^3 w^2\right) dw \ll x^3\;. 
\end{equation} 
\par
Regarding $I_3$ (and $f_3(z)$), we note that integration by parts (twice) gives  
\begin{align*} 
\int_{\frac{1}{z}}^{\infty} \widetilde B_2 (t) \frac{dt}{t^2} 
 &= - {\textstyle\frac{1}{3}} z^2 \widetilde B_3 \left( \frac{1}{z}\right) 
 + {\textstyle\frac{2}{3}} \int_{\frac{1}{z}}^{\infty} \widetilde B_3 (t) \frac{dt}{t^3}  \\ 
 &= - {\textstyle\frac{1}{3}} z^2 \widetilde B_3 \left( \frac{1}{z}\right) 
 - {\textstyle\frac{1}{6}} z^3 \widetilde B_4 \left( \frac{1}{z}\right) 
 + {\textstyle\frac{1}{2}} \int_{\frac{1}{z}}^{\infty} \widetilde B_4 (t) \frac{dt}{t^4}  \\ 
 &= - {\textstyle\frac{1}{3}} z^2 \widetilde B_3 \left( \frac{1}{z}\right)  
 +O\left( z^3\right)  \qquad\text{($0<z\leq 1$)} ,
\end{align*} 
since the periodic Bernouilli function $\widetilde B_4$ is bounded. 
Note also that one has 
$\int_0^x z^2 \widetilde B_3 (1/z) dz = \int_{1/x}^{\infty} t^{-4} \widetilde B_3 (t) dt$ 
(by the substitution $z=1/t$), 
and so we find (similarly to the above calculation) that one has 
\begin{equation*} 
\int_0^x z^2 \widetilde B_3 \left( \frac{1}{z}\right)  dz 
= - {\textstyle\frac{1}{4}} x^4 \widetilde B_4 \left( \frac{1}{x}\right)   + O\left( x^5\right)  
\ll x^4\;.
\end{equation*} 
By the preceding observations, we have 
\begin{equation}\label{Proof-s7A*4}
2y I_3 =\int_0^x 2y f_3(z) dz 
= -{\textstyle\frac{1}{3}} \int_0^ x  z^2\widetilde B_3 \left( \frac{1}{z}\right) dz  
+ \int_0^x O\left( z^3\right) dz \ll x^4\;. 
\end{equation} 
\par 
Turning, lastly, to $I_4$, we note that, since 
the series $\sum_{m>1/y} m^{-2} z \widetilde B_2 (my/z)$ is uniformly convergent 
for $0<z\leq x$, we may integrate term-by-term to get: 
\begin{equation*} 
I_4 = \int_0^x f_4 (z) dz 
= \sum_{m > \frac{1}{y}} \frac{1}{2 y^2 m^2} \int_0^x        
z \widetilde B_2 \left( \frac{my}{z}\right) dz\;.   
\end{equation*} 
Using the substitution $z=my/t$, followed by integration by parts (twice), one finds that 
when $m>0$ one has:  
\begin{align*}
\frac{1}{y^2 m^2}\int_0^x z \widetilde B_2 \left(\frac{my}{z}\right) dz  
 &= \int_{\frac{my}{x}}^{\infty}  t^{-3} \widetilde B_2 (t) dt \\ 
 &= - \sum_{r=3}^4\frac{1}{r} \left( \frac{my}{x}\right)^{\!-r} \widetilde B_r\left( \frac{my}{x}\right)  
 + \int_{\frac{my}{x}}^{\infty}  t^{-5} \widetilde B_4 (t) dt \\ 
 &= - \frac{1}{3} \left( \frac{my}{x}\right)^{\!-3} \widetilde B_3\left( \frac{my}{x}\right)  
 + O\left( \left( \frac{my}{x}\right)^{\!-4}\right) \;. 
\end{align*} 
Thus, since $\sum_{m>1/y} m^{-4} = O\left( y^3\right)$, we get: 
\begin{equation}\label{Proof-s7A*5}
I_4 = - {\textstyle\frac{1}{6}} \left( \frac{x}{y}\right)^{\!\!3} 
\sum_{m>\frac{1}{y}} m^{-3} \widetilde B_3\left( \frac{my}{x}\right) 
+ O\left( \frac{x^4}{y}\right) \;. 
\end{equation} 
\par 
By \eqref{Proof-s7A*1}--\eqref{Proof-s7A*5}, we conclude that    
\begin{equation*} 
I_0 (x,y) = - {\textstyle\frac{1}{6}} \left( \frac{x}{y}\right)^{\!\!3} 
\sum_{m>\frac{1}{y}} m^{-3} \widetilde B_3\left( \frac{my}{x}\right) 
+ O\left( \frac{x^4}{y}\right) + O\left( x^3\right) \;. 
\end{equation*} 
The lemma follows, since $\sum_{m>1/y} m^{-3} \widetilde B_3 (my/x) \ll \sum_{m>1/y} m^{-3} \ll y^2$  
and $O\left( x^4 / y\right) + O\left( x^3\right) \ll (x+y) x^3 / y \leq 2 x^3 / y$. 
\end{proof} 

As a corollary of Lemmas~5.3 and 5.5, we obtain the following lemma 
concerning the integral $I(x,y;w)$ that we have defined in \eqref{DefI}. 

\begin{lemma} Let $0<x,y\leq 1$. Then 
\begin{equation}\label{IxywBound}
I(x,y;w)\ll \min\left\{ \frac{x+y}{w} \,,\, \left( \frac{1}{x^2} + \frac{1}{y^2}\right) 
\left( 1 + \log\frac{1}{w}\right) w\right\} 
\end{equation} 
for $0<w\leq 1$, and one has 
\begin{equation}\label{IxywL1norm}
\int_0^1 \left| I(x,y;w)\right| dw \ll \left( 1 + \log\frac{1}{xy}\right) (x + y)\;. 
\end{equation} 
\end{lemma}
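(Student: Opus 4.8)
The plan is to prove the pointwise bound \eqref{IxywBound} first, by an integration by parts that converts $I(x,y;w)$ into an expression involving $I_0(\,\cdot\,,w)$, and then to deduce \eqref{IxywL1norm} from it by splitting the $w$-integral at a suitable point. Since $|I(x,y;w)|$ is unchanged under the transposition $(x,y)\mapsto(y,x)$ (because $I(y,x;w)=-I(x,y;w)$), while the right-hand sides of both assertions are symmetric in $x$ and $y$, I would assume throughout that $0<x\le y\le1$; the case $x=y$ is trivial, since then $I(x,y;w)=0$, and so is the case $w=0$, since $K_2(\,\cdot\,,0)\equiv0$ by \eqref{DefK} and \eqref{K2symmetrically}.

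For \eqref{IxywBound}, fix $0<x<y\le1$ and $0<w\le1$. By Corollary~2.8 the function $z\mapsto K_2(z,w)$ is continuous on $[0,1]$, so $z\mapsto I_0(z,w)$ (see \eqref{DefI_0}) is continuously differentiable there, with derivative $K_2(z,w)$; integrating by parts in \eqref{DefI} then gives
\[
I(x,y;w)=\int_x^y z^{-2}\,\frac{d}{dz}I_0(z,w)\,dz=\frac{I_0(y,w)}{y^{2}}-\frac{I_0(x,w)}{x^{2}}+2\int_x^y\frac{I_0(z,w)}{z^{3}}\,dz .
\]
Feeding the bound $I_0(z,w)\ll z^{3}/w$ of Lemma~5.5 into each term on the right (and using $\int_x^y z^{-3}\cdot z^{3}\,dz=y-x\le y$) yields $I(x,y;w)\ll(x+y)/w$; feeding in instead the bound $I_0(z,w)\ll(1+\log(1/w))w$ of Lemma~5.3, which is uniform in $z\in(0,1]$, yields $I(x,y;w)\ll\bigl(x^{-2}+y^{-2}\bigr)(1+\log(1/w))w$ (here one uses $\int_x^y z^{-3}\,dz\le\frac{1}{2}x^{-2}$). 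Taking the smaller of the two bounds gives \eqref{IxywBound}.

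For \eqref{IxywL1norm}, I would write $S:=x+y$ and $T:=x^{-2}+y^{-2}$, so that \eqref{IxywBound} reads $|I(x,y;w)|\ll\min\{S/w,\,Tw(1+\log(1/w))\}$. Since $x,y\le1$ forces $S\le2\le T$, the point $w_0:=(S/T)^{1/2}$ lies in $(0,1]$, and I would split $\int_0^1|I(x,y;w)|\,dw$ as $\int_0^{w_0}+\int_{w_0}^1$, using the second bound on $(0,w_0)$ and the first on $(w_0,1)$. Routine integration gives $\int_0^{w_0}Tw(1+\log(1/w))\,dw\ll Tw_0^{2}(1+\log(1/w_0))=S(1+\log(1/w_0))$ and $\int_{w_0}^1 S w^{-1}\,dw=S\log(1/w_0)$, so the whole integral is $\ll S(1+\log(1/w_0))$. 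Finally $\log(1/w_0)=\frac{1}{2}\log(T/S)$, and the crude estimates $S\ge\max\{x,y\}$ and $T\le2/\min\{x,y\}^{2}$ give $T/S\le2/\min\{x,y\}^{3}$, whence $\log(1/w_0)\ll1+\log\bigl(1/\min\{x,y\}\bigr)\le1+\log(1/(xy))$ (using that $\log(1/x)$ and $\log(1/y)$ are both non-negative); this is exactly \eqref{IxywL1norm}.

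I expect the integration by parts, and the attendant check that $I_0(\,\cdot\,,w)$ is legitimately differentiable (which is where the continuity of $K_2$ enters), to be the conceptual heart of the argument, while the main bookkeeping hurdle is the choice of the splitting point $w_0$ together with the verification that $\log(1/w_0)\ll1+\log(1/(xy))$. Neither should pose a genuine obstacle: apart from Lemmas~5.3 and~5.5 and the continuity of $K_2$, no further input is required.
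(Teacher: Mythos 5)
Your argument is correct and is essentially the paper's own proof: the same integration by parts expresses $I(x,y;w)$ through $I_0(\cdot,w)$, the same two bounds from Lemmas~5.5 and~5.3 give the two pieces of \eqref{IxywBound}, and the $L^1$ estimate is obtained by splitting the $w$-integral at a threshold and applying each bound on its range. The only cosmetic difference is the choice of splitting point (your $w_0=\bigl(\frac{x+y}{x^{-2}+y^{-2}}\bigr)^{1/2}$ versus the paper's $\delta=xy/\sqrt{x+y}$), but these agree up to a bounded factor, so the computation runs identically.
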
 

\begin{proof} 
Let $w\in (0,1]$. Given the definitions \eqref{DefI} and \eqref{DefI_0}, 
we find, using integration by parts, that one has 
\begin{equation*} 
I(x,y;w) = y^{-2} I_0 (y , w) - x^{-2} I_0 (x, w) + \int_x^y 2 z^{-3} I_0 (z , w) dz\;. 
\end{equation*} 
By Lemma~5.5 each term of form $I_0 (u, w)$ occurring in the last equation 
is of size $O(u^3 / w)$. Thus we find that 
$I(x,y;w) = O(y / w) - O( x / w) + \int_x^y O(1 / w) dz \ll (x+y)/w$.  
By using Lemma~5.3, in place of Lemma~5.5, one obtains the different estimate: 
\begin{align*} 
I(x,y;w) &= O\left( y^{-2} \left( 1 + \log\frac{1}{w}\right) w \right) 
- O\left( x^{-2}\left( 1 + \log\frac{1}{w}\right) w \right) \\ 
 &\phantom{{=}} \ \, + \int_x^y O\left( z^{-3} \left( 1 + \log\frac{1}{w}\right) w \right) dz \\
&\ll \left( \frac{1}{y^2} + \frac{1}{x^2}\right) \left( 1 + \log\frac{1}{w}\right) w\;.     
\end{align*} 
This completes the proof of \eqref{IxywBound}.
\par 
We now put 
\begin{equation*} 
\delta = \frac{xy}{\sqrt{x+y}} \;, 
\end{equation*} 
so that $0<\delta <x\sqrt{y}\leq 1$ and $1/\delta < 2/(xy)$. 
By \eqref{IxywBound}, we have 
\begin{align*} 
\int_0^1 \left| I(x,y;w)\right| dw 
 &= \int_0^{\delta} \left| I(x,y;w)\right| dw + \int_{\delta}^1 \left| I(x,y;w)\right| dw \\ 
 &\ll \left( \frac{1}{x^2} + \frac{1}{y^2}\right) \int_0^{\delta} \left( 1 + \log\frac{1}{w}\right) w dw 
+ (x + y) \int_{\delta}^1 \frac{dw}{w} \\ 
&= \left( \frac{y^2 + x^2}{x^2 y^2} \right) 
\delta^2 \left( {\textstyle\frac{3}{4}} + {\textstyle\frac{1}{2}}\log\frac{1}{\delta}\right)  
+ (x + y)\log\frac{1}{\delta} \\ 
 &\ll \left( \frac{x^2 + y^2}{x+y} + x + y\right) \left( 1 + \log\frac{1}{xy}\right) \;. 
\end{align*} 
The result \eqref{IxywL1norm} follows. 
\end{proof} 

\begin{definitions} 
We define 
\begin{equation}\label{DefPhi_0}
\Phi_0 (x) = \int_0^x \phi(y) dy\qquad\text{($0\leq x\leq 1$)} . 
\end{equation} 
For $0<x,y\leq 1$ and $\sigma < 3$, we put 
\begin{equation}\label{DefPhixysigma}
\Phi (x , y ; \sigma) = \int_x^y \frac{\phi (z) dz}{z^{\sigma}} \;. 
\end{equation} 
\end{definitions} 

\begin{theorem} 
One has 
\begin{equation*} 
\Phi_0 (x) \ll |\lambda|^3 x^3 \min\left\{ \lambda^2 \,,\, 1 + \log\frac{1}{x}\right\} 
\qquad\text{($0<x\leq 1$)} . 
\end{equation*} 
\end{theorem}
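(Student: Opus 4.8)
The plan is to reduce everything to the estimates for $I_0(x,y)$ that were established in Lemmas~5.3 and~5.5. First I would integrate the integral equation \eqref{K2Eigenfunction} with respect to its first variable over the interval $[0,x]$ and then interchange the order of integration. The interchange is legitimate by Fubini's theorem, since $K_2$ is bounded on $[0,1]\times[0,1]$ while $\phi\in L^2\bigl([0,1]\bigr)\subseteq L^1\bigl([0,1]\bigr)$, so that the double integral converges absolutely. Recalling \eqref{DefPhi_0} and \eqref{DefI_0}, one obtains
\begin{equation*}
\Phi_0 (x) = \lambda^2 \int_0^1 I_0 (x,y)\,\phi(y)\,dy \qquad\text{($0<x\leq 1$),}
\end{equation*}
whence $\bigl|\Phi_0(x)\bigr| \leq \lambda^2 \int_0^1 \bigl|I_0(x,y)\bigr|\,\bigl|\phi(y)\bigr|\,dy$. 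It then remains only to bound this last integral, which I would do in two ways.

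For the bound with factor $\lambda^2$ (that is, $\Phi_0(x)\ll|\lambda|^5 x^3$), I would use Theorem~3.2 in the form $\bigl|\phi(y)\bigr|\leq C_0|\lambda|^3 y$ together with the estimate $I_0(x,y)\ll x^3/y$ from Lemma~5.5, so that the integrand $\bigl|I_0(x,y)\bigr|\bigl|\phi(y)\bigr|$ is $\ll |\lambda|^3 x^3$ throughout $(0,1]$; integrating over $y\in[0,1]$ and multiplying by $\lambda^2$ then gives $\Phi_0(x)\ll|\lambda|^5 x^3$. For the complementary bound $\Phi_0(x)\ll|\lambda|^3 x^3\bigl(1+\log\tfrac1x\bigr)$, I would instead use the uniform bound $\bigl|\phi(y)\bigr|\leq\tfrac12|\lambda|$ from \eqref{PhiBoundedUniformly}, so that it suffices to show $\int_0^1\bigl|I_0(x,y)\bigr|\,dy\ll x^3\bigl(1+\log\tfrac1x\bigr)$. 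Here I would split the $y$-integral at the point $\delta:=x^{3/2}\in(0,1]$: on $(0,\delta)$ I would apply Lemma~5.3, contributing $\ll\int_0^{\delta}\bigl(1+\log\tfrac1y\bigr)y\,dy\ll\delta^2\bigl(1+\log\tfrac1\delta\bigr)$, and on $[\delta,1]$ I would apply Lemma~5.5, contributing $\ll x^3\int_{\delta}^1 y^{-1}\,dy=x^3\log\tfrac1\delta$. With $\delta=x^{3/2}$ both of these are $\ll x^3\bigl(1+\log\tfrac1x\bigr)$, so that $\int_0^1\bigl|I_0(x,y)\bigr|\,dy\ll x^3\bigl(1+\log\tfrac1x\bigr)$ and the desired bound follows after multiplication by $\tfrac12|\lambda|\cdot\lambda^2$. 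Taking the smaller of the two bounds just obtained completes the proof.

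There is no genuinely hard step here: the argument is bookkeeping once Lemmas~5.3 and~5.5 are in hand. The only points deserving any attention are the choice of the split point $\delta=x^{3/2}$, which is what makes the $\delta^2\bigl(1+\log\tfrac1\delta\bigr)$ contribution and the $x^3\log\tfrac1\delta$ contribution of the same order of magnitude, and the routine verification that the hypotheses needed for the Fubini interchange in the opening step are satisfied.
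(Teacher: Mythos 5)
Your proof is correct and matches the paper's own argument essentially line for line: the same Fubini-based reduction to $\Phi_0(x)=\lambda^2\int_0^1 I_0(x,y)\phi(y)\,dy$, the same use of Theorem~3.2 with Lemma~5.5 for the $|\lambda|^5 x^3$ bound, and the same use of \eqref{PhiBoundedUniformly} with a split at $\delta=x^{3/2}$ (Lemma~5.3 on $(0,\delta)$, Lemma~5.5 on $[\delta,1]$) for the logarithmic bound.
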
 

\begin{proof} 
Let $0<x\leq 1$. By \eqref{DefPhi_0}, \eqref{K2Eigenfunction} and Fubini's theorem for double integrals, 
it follows that one has 
\begin{equation}\label{Proof-s7B1} 
\Phi_0 (x) = \int_0^x \left( \lambda^2 \int_0^1 K_2 (y , z) \phi(z) dz\right) dy 
=\lambda^2 \int_0^1  I_0 (x , z) \phi(z) dz\;,
\end{equation} 
where $I_0 (x , z)$ is given by \eqref{DefI_0}. 
By \eqref{Proof-s7B1}, Theorem~3.2 and Lemma~5.5, we have  
\begin{equation}\label{Proof-s7B2}
\Phi_0 (x) =\lambda^2 \int_0^1 O\left( x^3 z^{-1}\right) \cdot 
O\left( |\lambda|^3 z\right) dz 
\ll |\lambda|^5 x^3\;. 
\end{equation} 
\par 
We now put: 
\begin{equation*} 
\delta = x^{3/2}\;, 
\end{equation*}
so that $0<\delta\leq 1$. We observe that, by Lemmas~5.3 and 5.5, 
one has 
\begin{align*} 
\int_0^1 \left| I_0 (x,z)\right| dz &= \int_0^{\delta} \left| I_0 (x,z)\right| dz 
+ \int_{\delta}^1 \left| I_0 (x,z)\right| dz \\
 &=\int_0^{\delta} \left( 1 + \log\frac{1}{z}\right) z dz 
+ \int_{\delta}^1 x^3 z^{-1} dz \\ 
 &= \left( {\textstyle\frac{3}{4}} + {\textstyle\frac{1}{2}}\log\frac{1}{\delta}\right) \delta^2 
+ x^3 \log\frac{1}{\delta} \ll \left( 1 + \log\frac{1}{x}\right) x^3 \;. 
\end{align*} 
By this, \eqref{PhiBoundedUniformly} and \eqref{Proof-s7B1}, it follows that 
\begin{equation}\label{Proof-s7B3} 
\Phi_0 (x) \ll |\lambda|^3 \int_0^1 \left| I_0 (x,z)\right| dz 
\ll |\lambda|^3 \left( 1 + \log\frac{1}{x}\right) x^3\;.  
\end{equation} 
The combination of \eqref{Proof-s7B2} and \eqref{Proof-s7B3} implies the theorem. 
\end{proof} 

\begin{corollary} 
Let $\sigma < 3$. Then one has 
\begin{equation*} 
\Phi (x,y;\sigma) \ll_{\sigma} |\lambda|^3 (x+y)^{3-\sigma} 
\min\left\{ \lambda^2 \,,\, 1 + \log\frac{1}{xy} \right\} 
\qquad\text{($0<x,y\leq 1$)} . 
\end{equation*} 
\end{corollary}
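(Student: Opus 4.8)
The plan is to reduce Corollary~5.10 to Theorem~5.9 by a single integration by parts, exploiting the fact that $\Phi_0(z)=\int_0^z\phi(y)\,dy$ is an antiderivative of the function $\phi$, which is continuous by Theorem~2.10. Since the quantities $(x+y)^{3-\sigma}$ and $\log\frac{1}{xy}$ are symmetric in $x$ and $y$, whereas $\Phi(x,y;\sigma)=-\Phi(y,x;\sigma)$, there is no loss of generality in assuming $0<x\le y\le1$ (and the case $x=y$ is trivial, since then $\Phi(x,y;\sigma)=0$).

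First I would note that on the compact interval $[x,y]\subset(0,1]$ both $z\mapsto z^{-\sigma}$ and $z\mapsto\Phi_0(z)$ are continuously differentiable, so classical integration by parts gives
\[
\Phi(x,y;\sigma)=\int_x^y z^{-\sigma}\Phi_0'(z)\,dz
 = y^{-\sigma}\Phi_0(y)-x^{-\sigma}\Phi_0(x)+\sigma\int_x^y z^{-\sigma-1}\Phi_0(z)\,dz .
\]
Next I would substitute the estimate of Theorem~5.9 in the form $|\Phi_0(z)|\ll|\lambda|^3 z^3 m(z)$, where $m(z):=\min\{\lambda^2,\,1+\log(1/z)\}$. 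Because $m(z)$ is non-increasing on $(0,1]$ and $z\ge x$ throughout the relevant ranges, one has $m(z)\le m(x)=\min\{\lambda^2,\,1+\log(1/x)\}\le\min\{\lambda^2,\,1+\log\frac{1}{xy}\}$, the last inequality holding because $\log(1/y)\ge0$. Denoting this last quantity by $M$, I then obtain that the three terms on the right-hand side above are, respectively, $\ll|\lambda|^3 M\,y^{3-\sigma}$, $\ll|\lambda|^3 M\,x^{3-\sigma}$, and $\ll|\sigma|\,|\lambda|^3 M\int_x^y z^{2-\sigma}\,dz$.

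The hypothesis $\sigma<3$ enters only in estimating this last integral: since $2-\sigma>-1$, the antiderivative $z\mapsto z^{3-\sigma}/(3-\sigma)$ is a non-negative increasing function of $z$, so $\int_x^y z^{2-\sigma}\,dz\le y^{3-\sigma}/(3-\sigma)\ll_\sigma y^{3-\sigma}$; and because $3-\sigma>0$ we also have $x^{3-\sigma}\le(x+y)^{3-\sigma}$ and $y^{3-\sigma}\le(x+y)^{3-\sigma}$. Collecting the three contributions therefore yields $\Phi(x,y;\sigma)\ll_\sigma|\lambda|^3(x+y)^{3-\sigma}M$, which is exactly the asserted bound. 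There is no real obstacle in this argument; the only mildly delicate point is the book-keeping — replacing the two distinct $z$-dependent factors ($\lambda^2$ and $1+\log(1/z)$) by the single uniform quantity $M$, and absorbing the possibly large constant $|\sigma|$ (and the factor $1/(3-\sigma)$) into the $\sigma$-dependent implied constant.
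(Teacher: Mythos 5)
Your proposal is correct and follows essentially the same route as the paper: a single integration by parts to write $\Phi(x,y;\sigma)$ in terms of $\Phi_0$ at the endpoints plus $\sigma\int_x^y z^{-\sigma-1}\Phi_0(z)\,dz$, followed by an application of Theorem~5.8 and elementary estimates using $3-\sigma>0$. The only difference is cosmetic book-keeping (you pull the factor $\min\{\lambda^2,\,1+\log(1/(xy))\}$ out uniformly from the start, after a WLOG reduction to $x\leq y$, whereas the paper applies the two branches of Theorem~5.8 separately and takes the minimum of the resulting estimates at the end).
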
 

\begin{proof} 
Let $x,y\in (0,1]$. It follows from \eqref{DefPhixysigma}, by integration by parts, 
that 
\begin{equation*} 
\Phi(x,y;\sigma) 
= y^{-\sigma} \Phi_0 (y) - x^{-\sigma} \Phi_0 (x) + \sigma \int_x^y z^{-\sigma - 1} \Phi_0 (z) dz\;, 
\end{equation*} 
where $\Phi_0 (z)$ is given by \eqref{DefPhi_0}. By this and Theorem~5.8, we have both 
\begin{align*} 
\Phi(x,y;\sigma) &\ll |\lambda|^5 \left( y^{3-\sigma} + x^{3-\sigma} 
+\left| \sigma \int_x^y z^{2-\sigma} dz\right|\right) \\
 &= |\lambda|^5 \left( y^{3-\sigma} + x^{3-\sigma}  
+ \frac{|\sigma|}{(3-\sigma)} \left| y^{3-\sigma} - x^{3-\sigma}\right|\right) \\ 
 &\ll_{\sigma} |\lambda|^5 (x+y)^{3-\sigma}   
\end{align*} 
and 
\begin{equation*} 
\Phi(x,y;\sigma) \ll_{\sigma} |\lambda|^5 (x+y)^{3-\sigma}  
\cdot \lambda^{-2} \left( 1 + \log\frac{1}{x} + \log\frac{1}{y}\right) \;. 
\end{equation*} 
The last two estimates imply the corollary. 
\end{proof} 

\begin{lemma} 
One has 
\begin{align*} 
\frac{\phi(y)}{y} - \frac{\phi(x)}{x} &= {\textstyle\frac{1}{2}} \lambda \phi(1) 
\left( \widetilde B_2 \left( \frac{1}{y}\right) - \widetilde B_2 \left( \frac{1}{x}\right)\right) \\ 
 &\phantom{{=}} \ \, + O\left( |\lambda|^5 \left( \log |\lambda|\right)^2 
(x+y)\left( 1 + \log\frac{1}{xy}\right)\right)  
\end{align*}
for $0<x,y\leq 1$. 
\end{lemma}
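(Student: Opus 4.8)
The plan is to write the left-hand side as an integral of the derivative of $z\mapsto\phi(z)/z$ and then to substitute the formula for $z\phi'(z)$ supplied by Lemma~5.1 (not by the cleaner-looking Theorem~5.4 --- see below). Since both the left-hand side and the claimed main term change sign under the interchange $x\leftrightarrow y$, while the error term is symmetric in $x$ and $y$, there is no loss of generality in assuming $0<x\leq y\leq 1$; and the case $x=y$ is trivial, so I would in fact take $0<x<y\leq 1$. On the closed interval $[x,y]\subset(0,1]$ the eigenfunction $\phi(z)$ is absolutely continuous (by the Remarks following Lemma~3.4), hence so is $z\mapsto\phi(z)/z$ (it is $\phi$ times the Lipschitz function $z\mapsto1/z$ on $[x,y]$), and therefore
\[
\frac{\phi(y)}{y}-\frac{\phi(x)}{x}=\int_x^y\frac{z\phi'(z)-\phi(z)}{z^2}\,dz\;,
\]
the integrand being defined and bounded for almost every $z\in(x,y)$ (by Theorems~2.10 and~4.9).

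Next I would invoke Lemma~5.1, valid at all but countably many points of $(x,y)$ (by Corollary~4.2), to replace $z\phi'(z)$ by $-\lambda\phi(1)\widetilde B_1(1/z)-\lambda^2\phi(1)K_2(z,1)+\lambda^2\int_0^1 K_2(z,w)w\phi'(w)\,dw$. Dividing by $z^2$, subtracting $\phi(z)/z^2$, and integrating over $[x,y]$ splits the left-hand side into four pieces. For the first, $-\lambda\phi(1)\int_x^z z^{-2}\widetilde B_1(1/z)\,dz$, the substitution $t=1/z$ together with the identity $\int_a^b\widetilde B_1(t)\,dt=\tfrac12\widetilde B_2(b)-\tfrac12\widetilde B_2(a)$ (already used in the proof of Lemma~2.3) yields exactly the main term $\tfrac12\lambda\phi(1)\bigl(\widetilde B_2(1/y)-\widetilde B_2(1/x)\bigr)$. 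The second piece is $-\lambda^2\phi(1)I(x,y;1)$ in the notation of Definitions~5.2, which by \eqref{IxywBound} (with $w=1$, so that $I(x,y;1)\ll x+y$ since $x,y\leq1$) and $|\phi(1)|\leq\tfrac12|\lambda|$ is $O(|\lambda|^3(x+y))$. The fourth piece is $-\Phi(x,y;2)$ in the notation of Definitions~5.7, which by Corollary~5.9 (with $\sigma=2$) is $O\bigl(|\lambda|^3(x+y)(1+\log\tfrac1{xy})\bigr)$.

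The third --- and dominant --- piece is $\lambda^2\int_x^y z^{-2}\bigl(\int_0^1 K_2(z,w)w\phi'(w)\,dw\bigr)dz$. Here I would apply Fubini's theorem on the rectangle $[x,y]\times[0,1]$: this is permissible because $K_2$ is bounded on $[0,1]^2$, the function $w\mapsto w\phi'(w)$ is bounded almost everywhere (Theorem~4.9) and measurable (Theorem~4.11), and $z^{-2}$ is bounded on $[x,y]$, so the integrand is a bounded measurable function on a bounded rectangle. This rewrites the piece as $\lambda^2\int_0^1 I(x,y;w)\,w\phi'(w)\,dw$. Bounding $|w\phi'(w)|\ll|\lambda|^3(\log|\lambda|)^2$ by Theorem~4.9 and using the $L^1$-in-$w$ estimate $\int_0^1|I(x,y;w)|\,dw\ll(1+\log\tfrac1{xy})(x+y)$ from \eqref{IxywL1norm}, this piece is $O\bigl(|\lambda|^5(\log|\lambda|)^2(x+y)(1+\log\tfrac1{xy})\bigr)$, precisely the error term asserted in the lemma. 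Finally, since $|\lambda|>2$ (by \eqref{SpectrumBound}) forces $|\lambda|^3\leq|\lambda|^5(\log|\lambda|)^2$, the second and fourth pieces are absorbed into this same $O$-term, and the lemma follows.

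The main obstacle, and the one point requiring care, is precisely the choice to expand $z\phi'(z)$ using Lemma~5.1 rather than the (superficially more convenient) Theorem~5.4. The error term in Theorem~5.4, once divided by $z^2$ and integrated over $[x,y]$, contributes a quantity of order $\log(y/x)$, which is \emph{not} $O\bigl((x+y)(1+\log\tfrac1{xy})\bigr)$ when $x$ is minuscule and $y$ is of moderate size. Retaining the error in its unintegrated form $\lambda^2\int_0^1 K_2(z,w)w\phi'(w)\,dw$ and then interchanging the order of integration is exactly what converts that fatal logarithmic factor into the benign factor $(x+y)(1+\log\tfrac1{xy})$ furnished by Lemma~5.6; everything else is routine, given the estimates of Sections~2--5.
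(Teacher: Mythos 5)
Your proposal is correct and follows essentially the same route as the paper's own proof of Lemma~5.10: write $\phi(y)/y - \phi(x)/x$ as the integral of the derivative of $z\mapsto\phi(z)/z$, substitute the expression for $z\phi'(z)$ from Lemma~5.1, use the substitution $t=1/z$ to extract the main term, apply Fubini to rewrite the $K_2$ integral as $\int_0^1 I(x,y;w)\,w\phi'(w)\,dw$, and then bound via \eqref{IxywL1norm}, Theorem~4.9, Corollary~5.9 and \eqref{IxywBound}. The only differences are cosmetic (the paper peels off $\Phi(x,y;2)$ before invoking Lemma~5.1 rather than after, and the intermediate bounds on the lower-order pieces are stated in slightly different but equivalent forms); your closing remark explaining why one must expand via Lemma~5.1 rather than Theorem~5.4 is a useful observation that the paper leaves implicit.
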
 

\begin{proof} 
Let $0<x<y\leq 1$. 
Recalling our Remarks following Lemma~3.4, we 
note (in particular) that the function $\phi$ satisfies a uniform Lipschitz condition 
of order $1$ on the interval $[x,y]$, and is (therefore) absolutely continuous on 
this interval. Since the same is true of the function $z\mapsto z^{-1}$, it 
follows that the function $z\mapsto z^{-1} \phi(z)$ satisfies a uniform Lipschitz 
condition of order $1$ on $[x,y]$, and so (like $\phi$) is absolutely 
continuous on this interval. Therefore  
\begin{align*} 
\frac{\phi(y)}{y} - \frac{\phi(x)}{x} 
 &= \int_x^y \left( \frac{d}{dz}\left( \frac{\phi(z)}{z}\right) \right) dz \\ 
 &= \int_x^y \left( \frac{\phi'(z)}{z} - \frac{\phi(z)}{z^2} \right) dz 
= \int_x^y \frac{\phi'(z) dz}{z} - \int_x^y \frac{\phi(z)dz}{z^2} dz\;. 
\end{align*} 
The last of the above integrals is $\Phi (x,y;2)\,$ (see the Definitions~5.7). 
Thus, by Corollary~5.9, we have  
\begin{equation}\label{Proof-s7C1}
\frac{\phi(y)}{y} - \frac{\phi(x)}{x} 
= \int_x^y \frac{\phi'(z) dz}{z} + O\left( |\lambda|^5 y\right) \;. 
\end{equation} 
\par 
By Lemma~5.1 and a simple substitution, one has 
\begin{align}\label{Proof-s7C2} 
\int_x^y \frac{\phi'(z) dz}{z} 
 &= \int_x^y  \biggl( -\lambda \phi (1) z^{-2} \widetilde B_1 \left( \frac{1}{z}\right) 
-\lambda^2 \phi(1) z^{-2} K_2 (z , 1) \nonumber\\ 
 &\phantom{{=}} \ \, +\lambda^2 z^{-2} \int_0^1 K_2 (z , w) w\phi'(w) dw \biggr) dz \nonumber\\ 
 &= \lambda \phi (1) \int_{\frac{1}{x}}^{\frac{1}{y}} \widetilde B_1 (t) dt 
 - \lambda^2 \phi(1) I(x,y;1) 
+ \lambda^2 J(x,y)\;,  
\end{align} 
where 
\begin{equation*} 
J(x,y) := \int_x^y \left( \int_0^1 z^{-2} K_2 (z , w) w\phi'(w) dw \right) dz\;, 
\end{equation*} 
while $I(x,y;w)$ is as defined in \eqref{DefI}. In view of Theorems~2.7 and~4.11, 
it follows by Fubini's theorem for double integrals that we have here: 
$J(x,y) = \int_0^1 I(x,y;w) \cdot w\phi'(w) dw$. 
By this, together with Theorem~4.9 and the estimate 
\eqref{IxywL1norm} of Lemma~5.6, we find that 
\begin{align}\label{Proof-s7C3}
J(x,y) &\ll |\lambda|^3 \left(\log |\lambda|\right)^2 \int_0^1 \left| I(x,y;w)\right| dw  \nonumber\\ 
 &\ll |\lambda|^3 \left(\log |\lambda|\right)^2\left( 1 + \log\frac{1}{x}\right) y\;. 
\end{align} 
\par 
By \eqref{PhiBoundedUniformly} and the estimate \eqref{IxywBound} of Lemma~5.6, 
we have $\phi(1) I(x,y;1)\ll |\lambda| y$. 
This, combined with \eqref{Proof-s7C1}, \eqref{Proof-s7C2} and \eqref{Proof-s7C3}, 
shows that  
\begin{equation*} 
\frac{\phi(y)}{y} - \frac{\phi(x)}{x} 
= \lambda \phi (1) \int_{\frac{1}{x}}^{\frac{1}{y}} \widetilde B_1 (t) dt  
+ O\left( |\lambda|^5 \left( \lambda^{-2} + \log^2 |\lambda|\right)   
\left( 1 + \log\frac{1}{x}\right) y\right) \;. 
\end{equation*} 
\par 
This completes our proof of those cases of the lemma in which one has 
$0<x<y\leq 1$: for one has 
$\int_a^b \widetilde B_1 (t) dt =  \frac{1}{2} \widetilde B_2 (b) - \frac{1}{2} \widetilde B_2 (a)\,$ 
($a,b\in{\mathbb R}$), and we know (see \eqref{SpectrumBound}) that $|\lambda|\geq 2$, 
so that one has $\log^2 |\lambda| \gg 1 \gg \lambda^{-2}$. 
The cases where $0<y<x\leq 1$ follow trivially from the cases just established, 
since swapping $x$ for $y$ in the equation occurring in the statement of the lemma 
has the same effect as multiplying both sides of that equation by $-1$. 
The remaining cases of the lemma (those where $x=y$) are trivially valid. 
\end{proof} 

\begin{theorem} 
One has 
\begin{equation*} 
\frac{\phi(x)}{x} = {\textstyle\frac{1}{2}} \lambda \phi(1) \widetilde B_2 \left( \frac{1}{x}\right) 
+ O\left( |\lambda|^5 \left( \log |\lambda|\right)^2 x \left( 1 + \log\frac{1}{x}\right)\right)  
\end{equation*}
for $0<x\leq 1$. 
\end{theorem}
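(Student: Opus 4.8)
The plan is to read off the theorem from Lemma~5.10 by means of a single integration. Write $g(y):=\phi(y)/y-\tfrac12\lambda\phi(1)\widetilde B_2(1/y)$ for $0<y\leq 1$; what has to be shown is that $g(x)\ll|\lambda|^5(\log|\lambda|)^2\,x\,(1+\log(1/x))$ with an absolute implied constant. Since $y\,g(y)=\phi(y)-\tfrac12\lambda\phi(1)\,y\widetilde B_2(1/y)$, and since (by Theorem~2.10, by the continuity of $\widetilde B_2$, and by $|y\widetilde B_2(1/y)|\leq y/6$) the right-hand side here extends to a continuous function on $[0,1]$ vanishing at $y=0$, the integral $\int_0^x y\,g(y)\,dy$ exists for every $x\in(0,1]$. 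The whole argument consists in estimating this integral in two different ways and comparing.

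First I would evaluate it directly. By the definition of $g$ and Definitions~5.7 one has $\int_0^x y\,g(y)\,dy=\Phi_0(x)-\tfrac12\lambda\phi(1)\int_0^x y\widetilde B_2(1/y)\,dy$. Theorem~5.8 gives $\Phi_0(x)\ll|\lambda|^5 x^3$ (take the $\lambda^2$ branch of the minimum there). For the remaining integral, the substitution $t=1/y$ turns it into $\int_{1/x}^{\infty}t^{-3}\widetilde B_2(t)\,dt$, and a single integration by parts against the bounded function $\widetilde B_3$ — using $\widetilde B_3{}'=3\widetilde B_2$ and $\max_t|\widetilde B_3(t)|=(12\sqrt3)^{-1}$, exactly the estimate carried out for the integral $I_1$ in the proof of Lemma~5.5 (and, earlier, in the proof of Lemma~2.4) — shows that $\int_{1/x}^{\infty}t^{-3}\widetilde B_2(t)\,dt\ll x^3$. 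Since $|\lambda\phi(1)|\leq\tfrac12\lambda^2$ by \eqref{PhiBoundedUniformly}, we conclude that $\int_0^x y\,g(y)\,dy\ll|\lambda|^5 x^3$. It should be stressed that the crude bound $|\widetilde B_2|\leq\tfrac16$ would here only give $O(x^2)$ for that last integral, which is useless: the extra factor $x$, coming from the cancellation in $\widetilde B_2$, is what makes the $\tfrac12\lambda\phi(1)\widetilde B_2(1/x)$ term account for the whole of the oscillation.

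Next I would evaluate the same integral via Lemma~5.10. Rewritten in terms of $g$, that lemma asserts that $g(y)-g(x)\ll|\lambda|^5(\log|\lambda|)^2(x+y)\bigl(1+\log\tfrac{1}{xy}\bigr)$ for all $0<x,y\leq 1$; that is, $g$ is "nearly constant". Splitting $\int_0^x y\,g(y)\,dy=\tfrac{x^2}{2}g(x)+\int_0^x y\bigl(g(y)-g(x)\bigr)\,dy$ and inserting this bound gives $\int_0^x y\,g(y)\,dy=\tfrac{x^2}{2}g(x)+O\!\bigl(|\lambda|^5(\log|\lambda|)^2\int_0^x y(x+y)(1+\log\tfrac{1}{xy})\,dy\bigr)$, and the elementary estimate $\int_0^x y(x+y)(1+\log\tfrac{1}{xy})\,dy\ll x^3(1+\log\tfrac1x)$ (legitimate because $xy\leq1$, so the logarithm is non-negative, and $\int_0^x y\log(1/y)\,dy\ll x^2(1+\log(1/x))$) reduces this to $\int_0^x y\,g(y)\,dy=\tfrac{x^2}{2}g(x)+O\!\bigl(|\lambda|^5(\log|\lambda|)^2 x^3(1+\log\tfrac1x)\bigr)$.

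Comparing the two evaluations yields $\tfrac{x^2}{2}g(x)\ll|\lambda|^5(\log|\lambda|)^2 x^3(1+\log\tfrac1x)$, hence $g(x)\ll|\lambda|^5(\log|\lambda|)^2 x(1+\log\tfrac1x)$, which is exactly the statement of the theorem once the definition of $g$ is unwound. I expect the only real obstacle to be the point flagged in the second paragraph: one genuinely needs $\int_0^x y\widetilde B_2(1/y)\,dy$ (equivalently $\int_{1/x}^{\infty}t^{-3}\widetilde B_2(t)\,dt$) to be $O(x^3)$ rather than merely $O(x^2)$, since otherwise the additive constant that Lemma~5.10 leaves undetermined cannot be shown to be negligible. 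Everything else is routine bookkeeping with estimates already in hand (Theorem~5.8, Lemma~5.10, \eqref{PhiBoundedUniformly}), and no splitting of the range of $x$ is needed.
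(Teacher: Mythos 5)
Your proof is correct and takes essentially the same approach as the paper: the paper also multiplies Lemma~5.10 through by $y$, integrates over $(0,x]$, estimates the resulting $\Phi_0(x)$ via Theorem~5.8 and $\int_0^x y\widetilde B_2(1/y)\,dy$ via the $I_1$ calculation from Lemma~5.5 (these being exactly your two evaluations of $\int_0^x y\,g(y)\,dy$), and divides out $x^2/2$. The point you flag as critical --- that $\int_0^x y\widetilde B_2(1/y)\,dy$ must be $O(x^3)$ and not merely $O(x^2)$ --- is indeed the step the paper leans on and explicitly recalls from the proof of Lemma~5.5.
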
 

\begin{proof} 
Let $0<x\leq 1$. By Lemma~5.10, we have 
\begin{multline*} 
\frac{\phi(x)}{x} - {\textstyle\frac{1}{2}} \lambda \phi(1) \widetilde B_2 \left( \frac{1}{x}\right) \\  
=  \frac{\phi(y)}{y} - {\textstyle\frac{1}{2}} \lambda \phi(1) \widetilde B_2 \left( \frac{1}{y}\right) 
+ O\left( |\lambda|^5 \left( \log |\lambda|\right)^2 
x\left( 1 + \log\frac{1}{y}\right)\right)  
\end{multline*}
for $0<y\leq x$. By multiplying both sides of the last equation by $y$, and then 
integrating (with respect to $y$) over the interval $(0,x]$, we deduce that 
\begin{align}\label{Proof-s7D1} 
{\textstyle\frac{1}{2}} x\phi (x)   
- {\textstyle\frac{1}{4}} \lambda \phi(1)x^2\widetilde B_2 \left( \frac{1}{x}\right)   
 &=\Phi_0 (x) 
- {\textstyle\frac{1}{2}} \lambda \phi(1) \int_0^x y \widetilde B_2 \left( \frac{1}{y}\right) dy \nonumber\\ 
 &\phantom{{=}}\ + O\left( |\lambda|^5 \left( \log |\lambda|\right)^2 x^3 \left( 1 + \log\frac{1}{x}\right) \right) ,     
\end{align}  
where $\Phi_0 (x)$ is given by \eqref{DefPhi_0}. 
We recall, from our treatment of the integral $I_1$ in the proof of Lemma~5.5, 
that one has  here 
$\int_0^x y \widetilde B_2 ( 1/y) dy \ll x^3$. 
By Theorem~5.8, we have $\Phi_0 (x) \ll |\lambda|^5 x^3$. 
Given \eqref{PhiBoundedUniformly} and 
\eqref{SpectrumBound}, it follows from the last two observations that the entire right-hand side 
of equation \eqref{Proof-s7D1} is of size 
$O\left( |\lambda|^5 \left( \log |\lambda|\right)^2 x^3 \left( 1 + \log\frac{1}{x}\right) \right)$. 
Thus we obtain an estimate,  
\begin{equation*} 
\left(\frac{\phi (x)}{x}  
- {\textstyle\frac{1}{2}} \lambda \phi(1)\widetilde B_2 \left( \frac{1}{x}\right)\right) 
\cdot {\textstyle\frac{1}{2}} x^2    
\ll |\lambda|^5 \left( \log |\lambda|\right)^2 x^3 \left( 1 + \log\frac{1}{x}\right) , 
\end{equation*} 
from which the required result follows. 
\end{proof} 

\begin{remarks} 
It follows from Theorem~5.11 that the righthand derivative $\phi_{+}' (0)$ is equal to $0$   
if $\phi(1)=0$, but does not exist if $\phi(1)\neq 0$. 
\end{remarks}

\end{document}